\documentclass{iopjournal}

\usepackage{amsmath,amsthm,amssymb,amsfonts}
\usepackage{xcolor}
\usepackage{graphicx}
\usepackage{mdframed}
\usepackage{mathrsfs}
\usepackage{pifont}
\usepackage{indentfirst}
\usepackage{float}
\usepackage{bbm}
\usepackage{svg}
\usepackage{lipsum}
\usepackage{cite}
\usepackage{algorithm}
\usepackage{algpseudocode}
\usepackage{hyperref}
\usepackage{comment}
\usepackage{tcolorbox}
\hypersetup{
    colorlinks=true,
    linkcolor=blue,
    filecolor=magenta,      
    urlcolor=blue,
    citecolor=blue
}
\usepackage{ragged2e}
\justifying

\graphicspath{{Images/}}

\newcommand{\GG}{\mathcal{G}}

\newcommand{\CC}{\mathcal{C}}

\newcommand{\NN}{\mathcal{N}}

\newcommand{\deltao}{\delta \Omega}
\newcommand{\deltap}{\delta p}

\newcommand{\ubar}{\bar{u}}

\newtheorem{theorem}{Theorem}[section]
\newtheorem{proposition}{Proposition}[section]
\newtheorem{assumption}{Assumption}[section]
\newtheorem{definition}{Definition}[section]

\newtheorem{remark}{Remark}[section]
\newtheorem{lemma}{Lemma}[section]

\AtBeginDocument{%
  \setlength{\textwidth}{16.2cm}
  \setlength{\oddsidemargin}{1cm}
}

\usepackage{etoolbox}

\makeatletter
\gdef\@oddhead{ \hfil\thepage}
\gdef\@evenhead{\thepage\hfil  }
\makeatother

\begin{document}

\title{A variational framework for Bayesian inversion in moving boundary Darcy flow}

\author{M.E. Causon\orcid{0009-0009-3261-0800}, M.A. Iglesias\orcid{0000-0002-8952-717X} and M.V. Tretyakov\orcid{0000-0002-7929-9046}}

\affil{School of Mathematical Sciences, University of Nottingham, Nottingham, UK}

\email{marco.iglesias@nottingham.ac.uk}

\begin{abstract}
\justifying \fontsize{10}{11.5}\selectfont

We develop an infinite-dimensional Bayesian framework for recovering spatially varying log-permeability from pressure observations in single-phase Darcy flow with a moving boundary, motivated by resin transfer moulding. The principal contribution is an observation-wise representer formulation that connects sensitivities of the parameter-to-observable map to the underlying moving-boundary state and adjoint systems. This structure yields a reduced Levenberg--Marquardt method for maximum a posteriori (MAP) estimation on the Cameron--Martin space of a Gaussian prior and an explicit finite-rank representation of the covariance associated with the linearisation around the MAP (LMAP) approximation. Once the representers have been computed, the LM update and covariance
assembly reduce to linear algebra in the observation space,
independently of the discretisation dimension of the permeability
field.

In 1D, the explicit solution of the moving-boundary problem is used to
establish posterior well-posedness and existence of MAP estimators, and
to derive rigorous closed-form expressions for the Fr\'echet derivatives
and their representers. In 2D, we introduce a very weak mixed
formulation in which the pressure, interface velocity, and evolving
domain are treated as coupled variables. A formal one-sided directional
shape linearisation yields the corresponding linearised state and
adjoint systems. Under an explicit differentiability assumption, these
systems provide computable observation-wise representers through linear
adjoint problems that can be solved independently and in parallel.

Numerical experiments show that, in 1D, the explicit
analytical structure reduces the computational cost to
\(\mathcal{O}(10^1)\) forward solves while producing an LMAP approximation
that closely matches a reference Markov chain Monte Carlo (MCMC)
posterior, whose computation requires
\(\mathcal{O}(10^5\text{--}10^6)\) forward simulations. In 2D, the observation-wise adjoint and representer formulation yields
a tractable, parallelisable algorithm. The resulting LMAP approximations produce
reconstructions and uncertainty estimates comparable to ensemble Kalman
inversion (EKI) with large ensemble sizes, at substantially reduced
computational cost, with solutions obtained in minutes rather than hours. The numerical inversion workflow is demonstrated across different
geometries and experimental configurations without requiring
reformulation of the inversion methodology.

\end{abstract}

\keywords{Bayesian inverse problems, moving boundary problems, linearisation around the maximum a posteriori (LMAP) estimate, resin transfer moulding.}

\section{Introduction}

Moving boundary problems arise when the spatial domain on which a system
of partial differential equations (PDEs) is posed evolves in time. In
such problems, the location of (part of) the boundary is not known
\emph{a priori}, but must instead be determined as part of the solution
itself. These are commonly referred to as free boundary problems and arise
in a wide range of applications, including phase transition phenomena,
subsurface flow, filtration through a porous dam, composites manufacturing, tumour growth, and wound healing
\cite{Stefan,Advani,Long,Dagan,Baiocchi1972,wound}. In many practical settings, the parameters
governing such PDE models are unknown or uncertain and must therefore be
inferred from indirect and often incomplete observations of the system.
This leads naturally to inverse problems in which one seeks to estimate
model parameters from observational data generated by an underlying
physical process described by the PDE system. The presence of a moving
boundary introduces additional complexity, since the unknown parameters
influence not only the solution of the governing PDEs but also the
geometry of the evolving domain itself.

In this work we study a Bayesian inverse problem arising from a moving
boundary model describing the resin infusion stage of Resin Transfer
Moulding (RTM), a manufacturing process used in the production of
fibre-reinforced composite materials \cite{Advani,Long}. During resin infusion, a viscous
resin is injected into a fibrous preform occupying an open bounded domain
\(D\subset\mathbb R^d\), \(d\in\{1,2,3\}\). Its boundary is decomposed as
\(
\partial D
=
\partial D_I
\cup
\partial D_N
\cup
\partial D_0,
\)
where \(\partial D_I\) denotes the inlet boundary,
\(\partial D_0\) the outlet boundary, and \(\partial D_N\) a perfectly
sealed boundary. The fibrous reinforcement is modelled as a porous medium
whose porosity and permeability are, in general, represented by the
spatially varying fields \(\phi(x)\) and \(K(x)\), respectively, for
\(x\in D\). Initially, the reinforcement is filled with air at pressure
\(p_0\). An incompressible resin of viscosity \(\mu_f\) is then injected
through the inlet boundary \(\partial D_I\) at pressure \(p_I>p_0\). At
each time \(t>0\), the resin occupies a time-dependent saturated region
\(\Omega(t)\subseteq D\), whose boundary $\Gamma(t):=\partial\Omega(t)$ consists of the moving resin front
\[
\Upsilon(t):=\Gamma(t)\cap D,
\]
together with the relevant portions of
\(\partial D_I\), \(\partial D_N\), and \(\partial D_0\). An example of
the setting considered here is shown in Figure~\ref{fig: setup_eg}.

Under the assumptions of Darcy flow and incompressibility, the pressure
field \(p\) satisfies
\begin{align}\label{eq:Darcy0}
    -\nabla \cdot \big( e^{u(x)} \nabla p(x,t) \big) = 0,
    \qquad x \in \Omega(t),\quad t>0,
\end{align}
where
\(
u(x):=\log K(x)
\)
denotes the log-permeability field. The motion of the moving front is
governed by the kinematic condition
\cite{Advani,MichaelMinho,Tartakovsky}
\begin{align}\label{eq:Darcy1}
    V(x,t)
    =
    -\frac{1}{\mu_f\phi(x)}
    e^{u(x)}
    \nabla p(x,t)\cdot n(x,t),
    \qquad
    x\in\Upsilon(t),\quad t>0,
\end{align}
where \(V(x,t)\) is the normal velocity of the front and
\(n(x,t)\) denotes the unit outward normal along \(\Upsilon(t)\). The
system is completed with the boundary and initial conditions
\begin{align}
    p(x,t) &= p_I,
    &&x\in\partial D_I,\quad t\geq0,   \label{eq:Darcy2E}\\
    p(x,t) &= p_0,
    &&x\in(\partial D_0\cup\Upsilon(t)),\quad t>0, \notag \\ 
    \nabla p(x,t)\cdot n(x) &= 0,
    &&x\in\partial D_N,\quad t\geq0, \notag \\ 
    p(x,0) &= p_0,
    &&x\in D, \notag \\ 
    \Upsilon(0) &= \partial D_I. \notag 
\end{align}
Here, \(n(x)\) denotes the unit outward normal along the fixed no-flow
boundary \(\partial D_N\).

Throughout this work it is assumed that the physical parameters
\((D,\phi,p_I,p_0,\mu_f)\) are known, although the framework can be extended
to account for uncertainty in some of these quantities. Hence, the system
\eqref{eq:Darcy0}--\eqref{eq:Darcy2E} defines a nonlinear forward operator
\[
u
\mapsto
(p,\Upsilon).
\]

\begin{figure}
    \centering
    \includegraphics[width=0.6\linewidth]{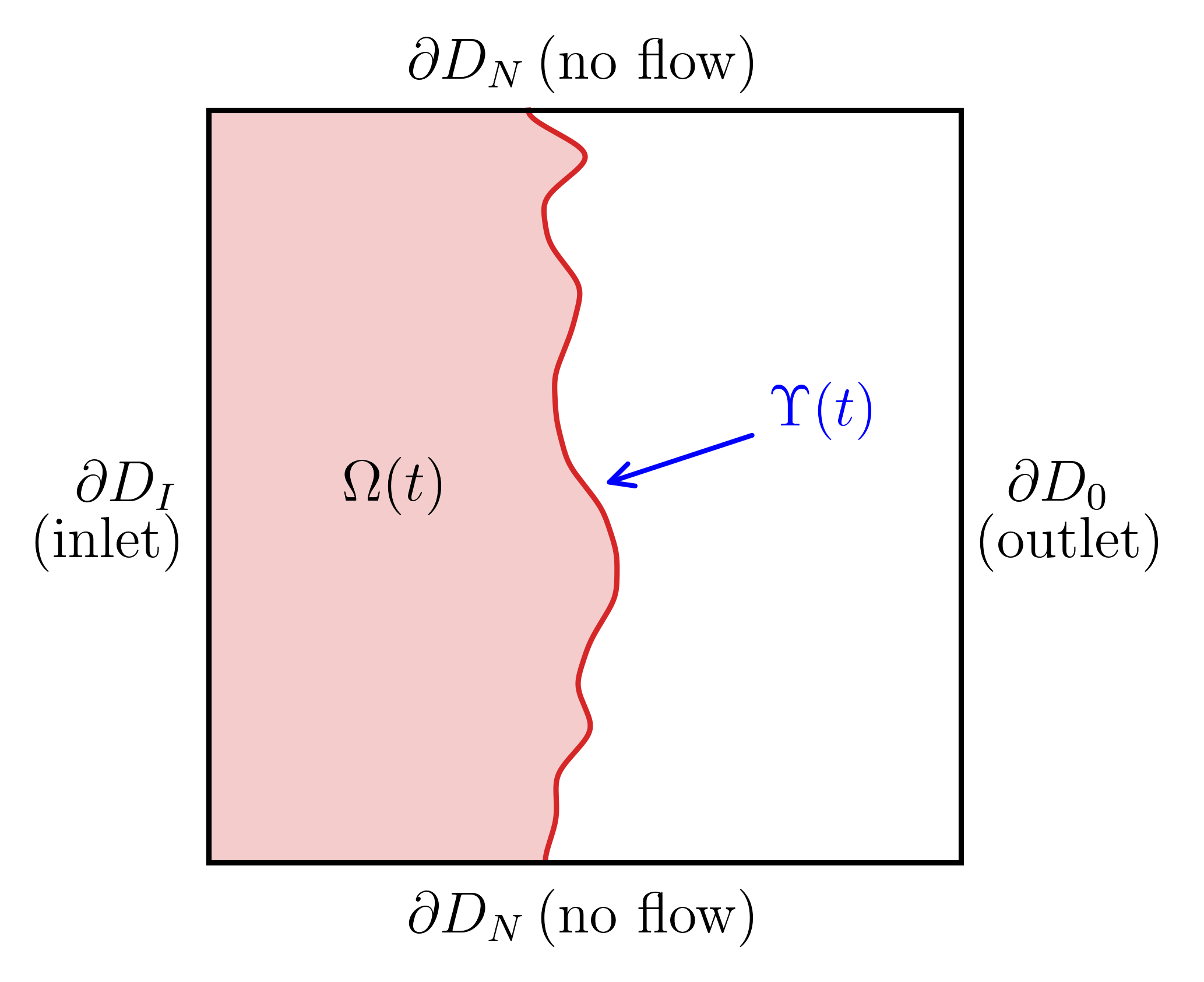}
    \caption{Example configuration.}
    \label{fig: setup_eg}
\end{figure}

The inverse problem considered in this work is that of inferring the
log-permeability field \(u(x)\) from pressure measurements collected from a
finite set of sensors distributed throughout the mould cavity. Such
pressure sensors are routinely embedded within RTM tooling and provide
time-dependent measurements of the evolving pressure field during the
infusion process \cite{Matveev,Causon2024}. 

\subsection{Industrial relevance}

Fibre-reinforced composites are used throughout the aerospace,
automotive, and marine industries \cite{Advani,Long}. Accurate
impregnation of the fibrous preform is crucial for the manufacture of
high-quality composite components. However, resin flow patterns are
strongly influenced by spatial variability in permeability, which is
inherently uncertain due to several factors including stochastic variability in
fibre arrangement, manufacturing imperfections during preform fabrication,
and localised deformation or wrinkling of the preform during layup
\cite{Andreas,Andreas2,Matveev2,Long2}. These effects induce random,
spatially varying deviations in permeability, leading to unpredictable
evolution of the moving resin front. Distorted flow patterns may produce
macroscopic voids or dry spots in the final component, significantly degrading
its mechanical properties and increasing manufacturing costs through wasted
material and rejected parts \cite{VARNA_voids_1995,Mehdikhani_voids,Advani}. 
The ability to infer permeability in real
time is therefore of considerable practical interest, particularly in the
context of defect detection and active process control. Consequently,
Bayesian inversion and uncertainty quantification for RTM processes have
received increasing attention in recent years (see, e.g. \cite{Causon2024,iglesias2025deeponet} and references therein), 
motivated both by advances in sensing technology and by the growing demand for reliable digital twins
and real-time monitoring strategies in composites manufacturing.

\subsection{Technical challenges}

Although Darcy flow in porous media has been extensively studied from
theoretical, numerical, and inverse perspectives, the moving
boundary formulation considered here differs fundamentally from the
classical fixed-domain Darcy models commonly used in groundwater and
reservoir simulation \cite{chen2006computational,bear1972dynamics}. In subsurface flow applications, invading and
displaced fluids typically coexist within partially saturated regions,
leading naturally to two-phase flow models involving coupled nonlinear
equations for pressure and saturation. Such models account for capillary
effects, relative permeability, and fluid mixing, and therefore do not
generally admit a sharp interface separating saturated and unsaturated
regions.

By contrast, the RTM model \eqref{eq:Darcy0}--\eqref{eq:Darcy2E} considered in this work adopts a
sharp-interface approximation in which the resin front is represented as a
moving boundary separating fully saturated and unsaturated regions. This
approximation is appropriate in many RTM settings due to the comparatively
small capillary diffusion and the relatively well-defined advancing resin
front observed experimentally. Similar sharp-interface formulations also
arise in certain subsurface applications under suitable assumptions,
including some models of \(\mathrm{CO}_2\) injection and seawater intrusion \cite{sharp,bear1972dynamics}.

Despite its practical relevance, the moving boundary model
\eqref{eq:Darcy0}--\eqref{eq:Darcy2E} introduces substantial mathematical
and computational challenges. The unknown permeability field \(u(x)\)
influences not only the pressure solution through the Darcy equation
\eqref{eq:Darcy0}, but also the evolution of the moving interface through
the kinematic condition \eqref{eq:Darcy1}. As a consequence, perturbations
in permeability induce coupled variations in both the state variable $p(x,t)$ and
the moving domain $\Omega(t)$. Deriving sensitivities of the forward model
therefore requires simultaneous linearisation of the pressure equation and
the moving boundary dynamics, substantially complicating adjoint
derivations, optimisation, and uncertainty quantification. To the best of
our knowledge, no variational method for the sharp-interface RTM
inverse problem has previously been developed, and corresponding
linearisations and adjoint systems suitable for gradient-based inversion are not currently available in the literature.

Although two-phase flow (fixed domain) formulations have also been proposed for modelling
resin infusion in RTM \cite{SHANTHAR2026109839}, such models are significantly more complicated
analytically and computationally, requiring the solution of strongly
coupled nonlinear systems together with more sophisticated and more computationally expensive numerical
discretisations. The sharp-interface formulation considered here therefore
provides an attractive compromise between physical realism and
computational tractability, particularly in the context of repeated
forward solves required for Bayesian inversion.

\subsection{Related work}

From the forward modelling perspective, explicit analytical solutions of
the moving boundary problem
\eqref{eq:Darcy0}--\eqref{eq:Darcy2E} are available only in one spatial
dimension. 
In higher dimensions, existence of a weak solution follows from \cite{BLZ2018} (see also references therein), but a corresponding uniqueness theory for spatially variable permeability on a general domain remains open.
Nevertheless, a range of numerical approaches for RTM
simulation have been developed and are routinely employed in both
open-source and industrial software
\cite{Advani,MichaelMinho,SHANTHAR2026109839}.

From the inverse perspective, existing approaches have largely
relied on derivative-free methodologies in which the forward solver is
treated as a black-box simulator, thereby avoiding the need for
linearisations and adjoint sensitivities. Among such methods,
ensemble Kalman inversion (EKI) has emerged as one of the principal
approaches for permeability inversion in RTM
\cite{Iglesias_2018,Matveev}, owing to its ability to approximate
posterior structure at substantially lower computational cost than
sampling-based methods such as Markov chain Monte Carlo (MCMC).
Nevertheless, repeated evaluations of the moving boundary problem remain
a major obstacle for real-time inversion and process control, since in
industrial settings a single high-fidelity simulation may require
several hours of computation.

Machine learning approaches for permeability estimation in RTM broadly
fall into two categories. The first seeks to learn the inverse map
directly from observations to permeability fields using supervised
learning techniques
\cite{Gonzalez,Gonzalez4,caglar2022deep,Hanna}. While such methods can
provide rapid predictions once trained, existing approaches have largely
been restricted to simplified or low-dimensional parameterisations in
which the permeability field is represented through a small number of
zones or geometric features. Moreover, the inverse map is typically
ill-posed, so that small perturbations in the observational data may
induce large changes in the inferred permeability. This raises
fundamental questions regarding robustness and generalisation beyond the
distribution represented in the training data.

The second class of approaches employs machine learning surrogates for the
forward model itself, including reduced-order models and neural operators,
within Bayesian inversion
\cite{Causon2024,iglesias2025deeponet,shuang}. Such methods can provide
substantial computational speedups once trained, but they generally
require large simulation datasets and introduce additional approximation
error. For example, the work of \cite{iglesias2025deeponet} reports
accurate inversion results using neural operators, but required
approximately \(10^4\) forward simulations for training. Furthermore, many neural operator architectures are
tied to fixed computational geometries, so changes in the mould
configuration may require retraining.

An additional challenge in Bayesian inversion is that surrogate-induced
errors must themselves be incorporated into the inference procedure. At
present, there is no broadly established approach for doing so
rigorously in the context of neural operators. Existing approaches, such
as enhanced model-error formulations \cite{OfflineUQ}, typically rely on
Gaussian assumptions for the surrogate error, despite the fact that such
approximation errors are generally neither Gaussian nor easily
characterised. Consequently, the impact of surrogate approximation on the
resulting posterior distribution remains difficult to quantify.

Together, these challenges motivate the development of variational
and adjoint-based methodologies capable of exploiting the structure of the
underlying PDE-constrained inverse problem directly, thereby leading to
efficient gradient-based optimisation and uncertainty quantification
without requiring large ensembles, extensive offline training, or repeated
black-box forward simulations.

\subsection{Contributions}

This work develops an infinite-dimensional Bayesian framework for moving
boundary inverse problems arising in resin transfer moulding. In
particular, we derive a variational formulation of the resin infusion
model \eqref{eq:Darcy0}--\eqref{eq:Darcy2E}, together with the associated
linearised and adjoint systems required for efficient gradient-based
inference. To the best of our knowledge, this is the first work to
combine shape-differentiable linearisations, adjoint-based optimisation,
and reduced posterior approximations within a unified Bayesian framework
for this class of Darcy-flow moving boundary problems.

The inverse problem is formulated using Gaussian Matérn priors on the
log-permeability field. Maximum a posteriori (MAP) estimators are
computed through minimisation of the Onsager--Machlup functional using a
reduced Levenberg--Marquardt scheme posed on the Cameron--Martin space of
the prior. Central to the methodology is the derivation of a representer
structure for the Fréchet derivatives of the observation operator. This
allows both the reduced gradient and the Gauss--Newton Hessian to be
expressed through low-dimensional matrices in observation space, while
the dependence on the infinite-dimensional parameter field is encoded
through a finite collection of representer functions associated with the
observations.

The same reduced structure is then used to construct a linearised MAP
(LMAP) approximation of the posterior distribution. In particular, the
posterior covariance is approximated using the Gauss--Newton Hessian
evaluated at the MAP estimator, yielding an explicit finite-rank
representation of the approximate posterior covariance in terms of the
representers. Consequently, both MAP estimation and posterior
approximation reduce to repeated forward and adjoint solves together with
low-dimensional linear algebra operations in observation space.

The characterisation of the representers constitutes the principal
analytical component of the work. In one spatial dimension (1D), where
explicit solutions of the moving boundary problem are available, we
derive closed-form expressions for the Fréchet derivatives and the
associated representers. This yields explicit formulas for the reduced
gradient and posterior covariance approximation and provides a rigorous
consistency check for the Bayesian framework.

In two spatial dimensions (2D), explicit analytical solutions are unavailable.
To address this, we introduce a very weak mixed formulation in which the
geometry and flow dynamics are coupled weakly through the interface
conditions. This formulation enables formal shape differentiation of the
forward map with respect to the log-permeability field and leads
naturally to the derivation of the corresponding linearised and adjoint
systems. Beyond the specific optimisation method considered here, the
resulting representer structure is sufficiently general to be integrated
into a broad class of derivative-informed deterministic and Bayesian
inversion methodologies for moving boundary problems.

From the computational perspective, the proposed methodology is
implemented through repeated forward and adjoint solves within the
reduced Levenberg--Marquardt method. In 1D, the
numerical experiments additionally validate the proposed Bayesian
approximation framework by benchmarking the LMAP approximation against
both MCMC and EKI. The results demonstrate that the Gaussian
approximation centred at the MAP captures the posterior distribution
accurately in the considered settings while being substantially cheaper
computationally.

In 2D, where repeated MCMC sampling becomes
computationally prohibitive, the numerical investigations focus on
comparisons between the proposed LMAP methodology and EKI. The results
demonstrate that the adjoint-based reduced framework provides
substantial computational advantages while retaining comparable
reconstruction quality and uncertainty quantification. The
methodology is applied to complex mould geometries, illustrating both
the flexibility of the variational formulation and the ability of the
framework to handle realistic moving boundary configurations beyond
simplified benchmark domains.

While the use of adjoint methods requires access to and modification of
the forward solver, the numerical experiments demonstrate that this
additional implementation effort yields substantial computational gains,
enabling accurate inversion with significantly fewer forward solves than
ensemble-based or derivative-free methodologies.

The remainder of the paper is organised as follows.
In Section~\ref{sec:problem}, we introduce the Bayesian inverse problem
setting together with the MAP estimators, the reduced
Levenberg--Marquardt formulation, and the associated LMAP posterior
approximation. Section~\ref{sec:representers} is devoted to the
derivation of the representers and the corresponding linearised and
adjoint systems. In Section~\ref{sec:numerics}, we discuss the numerical
implementation and present a range of one- and two-dimensional numerical
experiments. Finally, conclusions and directions for future work are
presented in Section~\ref{sec: Conclusion}.
Proofs and additional computational details are given in Appendices.

\section{Variational framework for Bayesian inversion}
\label{sec:problem}

\subsection{The forward map}

The forward model is given by
\eqref{eq:Darcy0}--\eqref{eq:Darcy2E}. We first note that the pressure
field \(p(x,t)\) is only physically defined within the saturated region
\(\Omega(t)\). For variational and inverse problem
purposes, it is convenient to regard \(p\) as an extension to the
hold-all domain \(D\). Throughout this work, the porosity \(\phi\) is
assumed constant for simplicity.

An important quantity in the subsequent analysis is the time at which the
entire domain becomes saturated (see \cite{MichaelMinho}):
\begin{equation}\label{eq:filltime}
\tau^*
:=
\inf\{t>0\,|\, \overline{\Omega}(t)=\overline D\},    
\end{equation}
referred to as the \emph{filling time}. For \(t\geq\tau^*\), the moving
boundary disappears and the system \eqref{eq:Darcy0}--\eqref{eq:Darcy2E} reduces to the standard steady-state
Darcy problem posed on the fixed domain \(D\).

The quantities \(p\), \(\Upsilon\), and \(\tau^*\) depend on
the log-permeability field \(u\). Where convenient, this dependence is
made explicit through the notation
\(
p[u]
\),
\(
\Upsilon[u]
\),
and
\(
\tau^*[u]
\).
Although the moving boundary is itself determined by \(u\), the inverse
problem considered here is formulated solely in terms of pressure
observations. Accordingly, we consider only the pressure component of the
forward operator $u\mapsto p[u]$.

Our objective is to infer the log-permeability field \(u(x)\) from
finitely many noisy observations of the pressure field over the
space--time domain \(D_T:=D\times(0,T]\), where \(T>0\) denotes the simulation/observation horizon. Motivated both by the analytical structure of the
1D solution discussed in Section \ref{sec:1D} and by the variational formulation introduced
in Section \ref{sec:2D} for the 2D setting, we work throughout with the
functional setting
\[
u\in X:=C(\overline D),
\qquad
p[u]\in
L^2(0,T;H^1(D))
\cap L^\infty(D_T).
\]

To remain consistent with this framework, observations are modelled as
bounded linear functionals of the pressure field. Specifically, we
consider a collection of observation functionals
\(
\{\mathcal G_i\}_{i=1}^M
\)
of the form
\begin{equation}
\mathcal G_i(u)
=
\int_0^T\int_D
\mathcal H_i(x,t)\,p[u](x,t)\,dx\,dt,
\qquad i=1,\ldots,M,
\label{eq:observation_functionals}
\end{equation}
for kernels
\(
\mathcal H_i\in L^\infty(D_T)
\).
This assumption ensures that each observation functional is well defined
and continuous with respect to the pressure field, while remaining
sufficiently general to model localised measurements.

The forward (parameter-to-observable) map
\(
\mathcal G:X\to\mathbb R^M
\)
is therefore defined by
\begin{equation}
\mathcal G(u)
=
\big(
\mathcal G_1(u),\ldots,\mathcal G_M(u)
\big)
\in\mathbb R^M,
\label{eq: Forward_Map}
\end{equation}
where \(X=C(\overline D)\) is equipped with the supremum norm.

Observations are assumed to be corrupted by additive Gaussian noise:
\begin{equation}\label{eq:obsmodel}
y = \mathcal G(u) + \eta,
\end{equation}
where \(\eta \sim \mathcal N(0,\Sigma)\) with known covariance
\(\Sigma \in \mathbb R^{M \times M}\). The inverse problem is therefore to recover \(u\) from a realisation of the data \(y\) given
the nonlinear forward map \(\mathcal G\).

In applications, each functional \(\mathcal G_i\) is associated with a
sensor located at a space--time point \((x_i,t_i)\), and the corresponding
kernel \(\mathcal H_i\) is chosen to be localised in a neighbourhood of
this point. In this sense, \(\mathcal G_i(u)\) may be interpreted as a
regularised approximation of the pointwise observation \(p[u](x_i,t_i)\),
with the regularisation reflecting both analytical requirements and the
finite resolution of numerical discretisations.

\subsection{Bayesian formulation}\label{sec:Bayes}

A Gaussian prior \(\mu_0 = \mathcal{N}(\bar u, \mathcal{C}_0)\) is placed on \(u\), where the covariance operator \(\mathcal{C}_0\) is a self-adjoint, positive, trace-class operator on \(L^2(D)\). Attention is restricted to Matérn-class covariance operators, represented in operator form by 
\[
\mathcal C_0
=
\sigma^2(\varkappa^2 I-\Delta)^{-(\nu+d/2)},
\]
where \(\sigma^2\) controls the marginal variance while \(\varkappa\) determines the intrinsic lengthscale $l=\sqrt{8\nu}/\varkappa$ \cite{LINDGREN2022100599}. The fractional power of the Laplacian is understood spectrally together with appropriate boundary conditions on \(D\). Under standard assumptions on \(D\), samples from \(\mu_0\) belong
almost surely to \(H^s(D)\) for every \(s<\nu\). We therefore assume $\nu>d/2$, so that the prior has continuous sample paths and
\(
\mu_0\bigl(C(\overline D)\bigr)=1
\). Thus, with
\(
X:=C(\overline D)
\), the prior may be regarded as a Gaussian measure on \(X\).

The Cameron--Martin (CM) space \(E\) associated with \(\mu_0\) is
\[
E=\operatorname{Range}(\mathcal C_0^{1/2}),
\]
equipped with the inner product
\[
\langle h_1,h_2\rangle_E
=
\left\langle
\mathcal C_0^{-1/2}h_1,
\mathcal C_0^{-1/2}h_2
\right\rangle_{L^2(D)}.
\]
For covariance operators of the form above, and subject to the boundary
conditions used in the spectral definition of the operator, the
CM space \(E\) is norm-equivalent to
\(H^{\nu+d/2}(D)\) \cite{BolinKirchner2023}. Consequently, \(E\) is compactly embedded in
\(C(\overline D)\) whenever \(\nu>0\).

Given the observational model \eqref{eq:obsmodel}, the likelihood is given by
\begin{align}
\pi(y \mid u) \propto \exp\big(-\Phi_{y}(u)\big), 
\qquad 
\Phi_{y}(u) := \frac{1}{2}\|y - \mathcal{G}(u)\|_{\Sigma}^2,
\label{eq:likelihood}
\end{align}
where
\(
\|v\|_\Sigma^2
:=
v^\top\Sigma^{-1}v,
\) and \(\Sigma\) is assumed symmetric and positive definite.

Under suitable assumptions on \(\GG\), the posterior measure \(\mu^y\) is absolutely continuous with respect to the prior measure \(\mu_0\), with Radon--Nikodym derivative \cite{Stuart}:
\begin{equation}\label{eq:RN}
\frac{d\mu^y}{d\mu_0}(u)
\propto
\exp\big(-\Phi_y(u)\big).
\end{equation}
MAP estimators, interpreted in the small-ball probability sense \cite{dashti2013map}, are characterised as minimisers of the Onsager--Machlup functional
\begin{align} \label{eq: Onsager_Machlup}
    J_{\mathrm{OM}}(u)
    :=
    \begin{cases}
        \Phi_y(u)
        +
        \dfrac12\lVert u-\bar u\rVert_E^2,
        & u-\bar u\in E,\\
        +\infty,
        & u-\bar u\in X\setminus E.
    \end{cases}
\end{align}
We assume throughout that \(\bar u\in E\). It then follows directly from
\eqref{eq: Onsager_Machlup} that any MAP estimator \(u_{\mathrm{MAP}}\) with finite Onsager--Machlup functional belongs to \(E\).

In Section~\ref{sec:1D} we verify the above assumptions explicitly for the
1D version of \eqref{eq:Darcy0}--\eqref{eq:Darcy2E} and establish existence of
minimisers of \(J_{\mathrm{OM}}\). In this setting, the forward map \(\GG\)
admits a closed-form representation, enabling a rigorous derivation of its
Fréchet derivative \(D\GG\). In the 2D case, explicit analytical expressions for the forward
map are not available. 
As noted in the Introduction, existence of a weak forward solution is known \cite{BLZ2018}, but a corresponding uniqueness and 
differentiability theory is not available at this level of generality, and its establishment is beyond the scope of the present work.
We therefore proceed
under analogous regularity assumptions on \(\GG\), and derive the required
Fréchet derivatives from a weak formulation using shape calculus and
adjoint-based techniques.

The derivation of Fréchet derivatives and the associated adjoint-based
representer structure is deferred to
Section~\ref{sec:representers}. For the moment, we proceed under the assumption that suitable derivative
representations are available and focus instead on the resulting reduced
optimisation framework for MAP estimation through minimisation of the
Onsager--Machlup functional \eqref{eq: Onsager_Machlup}. In addition, we consider the standard linearised MAP (LMAP)
approximation \cite{Ghattas1,Ghattas2}, which provides a practical
Gaussian approximation of the posterior measure of the form
\begin{equation}
\mu^y
\approx
\mathcal N(u_{\mathrm{MAP}},\mathcal C_{\mathrm{MAP}}).
\label{eq:Gapp}
\end{equation}
The covariance operator \(\mathcal C_{\mathrm{MAP}}\) is defined through
the Gauss--Newton approximation of the posterior Hessian:
\begin{align}\label{eq:Cov}
\mathcal{C}_{\mathrm{MAP}}
:=
\mathcal C_0
-
\mathcal C_0D\GG(u_{\mathrm{MAP}})^\ast
\left(
D\GG(u_{\mathrm{MAP}})
\mathcal C_0
D\GG(u_{\mathrm{MAP}})^\ast
+\Sigma
\right)^{-1}
D\GG(u_{\mathrm{MAP}})
\mathcal C_0,
\end{align}
where \(D\GG(u_{\mathrm{MAP}})^\ast\) denotes the adjoint of
\(D\GG(u_{\mathrm{MAP}})\), which, as shown in the following sections, admits
an extension to \(L^2(D)\).

The remainder of this section is devoted to the efficient computation of local
minimisers of \eqref{eq: Onsager_Machlup}.

\subsection{Levenberg--Marquardt scheme}\label{sec:LM}

A classical approach for computing MAP estimators in PDE-constrained
Bayesian inversion is to formulate the optimisation problem as the
minimisation of the Onsager--Machlup functional subject to the governing
PDE constraints, and then derive the associated
Karush--Kuhn--Tucker optimality system involving the state, parameter,
and adjoint variables \cite{Ghattas1,doi:10.1137/130934805}. This full-space
formulation leads to a nonlinear saddle-point system, typically solved
through Newton-type linearisation together with preconditioned Krylov
subspace methods. While effective, such approaches require the
solution of large-scale systems whose dimension scales with both the state and
parameter discretisations.

In this work, we adopt a reduced formulation in which the PDE constraint is
incorporated through the forward map \(\mathcal G\), and the optimisation is carried out directly over the CM space \(E\).
Let \(u_k - \bar u \in E\) denote the (mean-adjusted) current iterate, and assume that
\(\mathcal G\) is Fréchet differentiable at \(u_k\). Using the first-order
approximation $\mathcal G(u_k + h)
\approx
\mathcal G(u_k) + D\mathcal G(u_k)h$, we define the Levenberg--Marquardt (LM) iteration \(u_{k+1} = u_k + h_k\),
where the increment \(h_k \in E\) is the minimiser of
\begin{align}
J_k(h)
&=
\frac12
\|y - \mathcal G(u_k) - D\mathcal G(u_k)h\|_\Sigma^2
+
\frac12
\|u_k + h - \bar u\|_E^2
+
\frac{\alpha_k}{2}\|h\|_E^2,
\qquad h \in E.
\label{eq:LM_cost}
\end{align}
The functional \( J_k\) is continuous, coercive, and strictly convex on
\(E\), and therefore admits a unique minimiser \(h_k \in E\). The case \(\alpha_k = 0\) corresponds to the Gauss--Newton method, while
\(\alpha_k > 0\) yields the Levenberg--Marquardt regularisation, which improves
stability of the linearised problem.

\subsection{Representer formulation and reduced structure}\label{sec:reduced}

We now exploit the finite-dimensional structure of the data. For a given
iterate \(u_k \in E\), the analysis from Section~\ref{sec:representers} shows that for each component $\GG_{i}$ of the forward map defined in \eqref{eq:observation_functionals} there exists a \textit{representer} function \(r_i[u_k] \in L^2(D)\),
\(i=1,\ldots,M\), such that
\begin{equation}
D\GG_{i}(u_{k}) h
=
\langle r_i[u_k], h \rangle_{L^2(D)},
\qquad \forall \ h \in E.
\label{eq:L2_rep}
\end{equation}
Hence, the Fréchet derivative of the forward map can be written as
\begin{equation}
D\GG(u_{k}) h=\Big(\langle r_1[u_k], h \rangle_{L^{2}(D)},\dots, \langle r_M[u_k], h \rangle_{L^{2}(D)}\Big) .
\label{eq:DG_rep}
\end{equation}
Because each \(r_i[u_k]\in L^2(D)\), formula~\eqref{eq:DG_rep}
defines a bounded extension of
\(
D\mathcal G(u_k):E\longrightarrow\mathbb R^M
\)
to an operator on \(L^2(D)\). For the Matérn covariance operators considered here, the CM space \(E\) is dense in \(L^2(D)\) \cite{Dashti2017}. Hence, this
bounded extension is unique. Its adjoint
\(
D\mathcal G(u_k)^*:\mathbb R^M\longrightarrow L^2(D)
\)
is therefore given by
\begin{equation}
D\GG(u_{k})^{\ast} w = \sum_{i=1}^M w_i r_i[u_k].
\label{eq:DG_rep_star}
\end{equation}

The interaction between the observation directions is encoded in the matrix
\begin{equation}
[\mathcal R(u_k)]_{ij}
:= [D\GG(u_{k})\mathcal C_0 D\GG(u_{k})^{\ast}]_{ij}
=
\langle r_i[u_k], \mathcal C_0r_j[u_k] \rangle_{L^{2}(D)}.
\label{eq:mat_rep}
\end{equation}

The following theorem (see Appendix~\ref{app: lm_descent} for its proof) provides a reduced formulation of both the LM update and the LMAP covariance.

\begin{theorem}[Reduced LM update and LMAP covariance]
\label{thm:LM_LMAP_clean}
Let \(u_k \in E\) and suppose that \(\mathcal G\) is Fréchet differentiable at
\(u_k\) and that there exist $r_{i}[u_{k}]\in L^{2}(D)$, $i=1,\dots, M$, satisfying \eqref{eq:L2_rep}. Then \\
\noindent
(i) the LM update \(h_k \in E\) is given by
\begin{equation}
h_k = \frac{\bar u - u_k}{1 + \alpha_k}+\sum_{i=1}^M a_{k,i}\, \mathcal C_0r_i[u_{k}],
\label{eq:MAP_update}
\end{equation}
where \(a_k=(a_{k,1},\dots,a_{k,M})\) solve
\begin{equation}
\bigl[\mathcal R(u_k) + (1 + \alpha_k)\Sigma\bigr] a_k
= y - \mathcal G(u_k)-
D\GG(u_{k})\!\left(\frac{\bar u - u_k}{1 + \alpha_k}\right);
\label{eq:MAP_update2}
\end{equation}

\medskip

\noindent
(ii) for \(u_{\mathrm{MAP}}\) being a stationary point of \(J_{\mathrm{OM}}\), the Gauss--Newton Hessian covariance in \eqref{eq:Cov} admits the finite-rank representation
\begin{equation}
\label{eq:CMAP_rep}
\mathcal C_{\mathrm{MAP}} v
=\mathcal C_0 v - \mathcal C_0\sum_{i,j=1}^M
r_i[u_{\mathrm{MAP}}]\,
\bigl[\mathcal R(u_{\mathrm{MAP}}) + \Sigma\bigr]^{-1}_{ij}\,
\langle r_j[u_{\mathrm{MAP}}], \mathcal C_0 v \rangle_{L^2(D)},
\qquad v \in L^2(D),
\end{equation}
and hence differs from \(\mathcal C_0\) by an operator of rank at most \(M\).
\end{theorem}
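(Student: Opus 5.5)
For part (i) I will derive the first-order optimality condition of the strictly convex quadratic $J_k$ on the Hilbert space $E$. Write $d_k:=y-\mathcal G(u_k)$ and $L:=D\GG(u_k)$. For a direction $g\in E$, the Gateaux derivative of \eqref{eq:LM_cost} is
\[
-\langle \Sigma^{-1}(d_k-Lh), Lg\rangle_{\mathbb R^M}+\langle u_k-\bar u+h,g\rangle_E+\alpha_k\langle h,g\rangle_E .
\]
It vanishes for all $g$ exactly at the unique minimiser $h_k$.

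The key identity converts the data term into an $E$-inner product. By \eqref{eq:L2_rep}, for $w\in\mathbb R^M$ we have $\langle w,Lg\rangle=\langle \sum_i w_i r_i,g\rangle_{L^2}$. For $r\in L^2(D)$ we have $\mathcal C_0 r=\mathcal C_0^{1/2}(\mathcal C_0^{1/2}r)\in E$, and
\[
\langle \mathcal C_0 r,g\rangle_E=\langle \mathcal C_0^{1/2}r,\mathcal C_0^{-1/2}g\rangle_{L^2}=\langle r,g\rangle_{L^2}.
\]
So the $E$-gradient of the misfit is $-\mathcal C_0 L^*\Sigma^{-1}(d_k-Lh)$, and the optimality condition in $E$ reads
\[
(1+\alpha_k)h=\bar u-u_k+\mathcal C_0 L^*\Sigma^{-1}(d_k-Lh).
\]
Setting $a_k:=(1+\alpha_k)^{-1}\Sigma^{-1}(d_k-Lh_k)\in\mathbb R^M$ gives \eqref{eq:MAP_update} directly, using \eqref{eq:DG_rep_star}. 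Applying $L$ to \eqref{eq:MAP_update} gives
\[
Lh_k=L\frac{\bar u-u_k}{1+\alpha_k}+\mathcal R(u_k)a_k,
\]
because $L\mathcal C_0 r_j$ has $i$-th entry $\langle r_i,\mathcal C_0 r_j\rangle_{L^2}$, which is \eqref{eq:mat_rep}. Substituting this into $(1+\alpha_k)\Sigma a_k=d_k-Lh_k$ yields \eqref{eq:MAP_update2}.

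I still need the matrix $\mathcal R+(1+\alpha_k)\Sigma$ to be invertible. This holds because $\mathcal R$ is a Gram matrix in the $\mathcal C_0$-inner product, hence positive semidefinite, and $\Sigma$ is positive definite. Conversely, the $h_k$ defined by \eqref{eq:MAP_update}--\eqref{eq:MAP_update2} lies in $E$ (since $u_k,\bar u\in E$) and satisfies the optimality condition. By uniqueness of the minimiser it is therefore the LM update.

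For part (ii) I will substitute \eqref{eq:DG_rep} and \eqref{eq:DG_rep_star} into \eqref{eq:Cov} with $u_k=u_{\mathrm{MAP}}$. The middle factor is $[\mathcal R(u_{\mathrm{MAP}})+\Sigma]^{-1}$, again invertible by the same PSD-plus-PD argument. The right factor acting on $v$ is the vector $\bigl(\langle r_j,\mathcal C_0 v\rangle_{L^2}\bigr)_j$. The left factor maps $w$ to $\mathcal C_0\sum_i w_i r_i$. Composing these gives \eqref{eq:CMAP_rep}. The correction term has range inside $\operatorname{span}\{\mathcal C_0 r_i\}$, so it has rank at most $M$. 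Stationarity of $u_{\mathrm{MAP}}$ is only needed to ensure $u_{\mathrm{MAP}}\in E$, where the representers are assumed to exist.

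The main obstacle is the rigorous identification of the $E$-gradient of the misfit with $\mathcal C_0 L^*\Sigma^{-1}(\cdot)$. This requires $\mathcal C_0 r\in E$ and the identity $\langle\mathcal C_0 r,g\rangle_E=\langle r,g\rangle_{L^2}$ for $r\in L^2$. I will handle it through the spectral definition of $\mathcal C_0^{\pm1/2}$, working on $\operatorname{Range}(\mathcal C_0^{1/2})$, together with density of $E$ in $L^2(D)$. That density is what makes the extension of $L$ to $L^2(D)$, and hence its adjoint, uniquely defined.
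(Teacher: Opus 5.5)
Your proof is correct. Part (ii) is the same direct substitution of \eqref{eq:DG_rep} and \eqref{eq:DG_rep_star} into \eqref{eq:Cov} that the paper carries out.

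Part (i) takes a genuinely different route. The paper first shifts $h=g_k+z$ with $g_k=(\bar u-u_k)/(1+\alpha_k)$. It then splits $E=S\oplus S^\perp$ with $S=\operatorname{span}\{\mathcal C_0 r_i[u_k]\}$ and observes that $D\mathcal G(u_k)$ annihilates $S^\perp$, so the minimiser lies in $S$. It minimises over coefficient vectors $\beta\in\mathbb R^M$. Because $\mathcal R(u_k)$ may be singular, $\beta$ need not be unique, so the paper needs an extra argument showing that every such $\beta$ yields the same element $\sum_i a_{k,i}\mathcal C_0 r_i[u_k]$.

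You instead write the Euler--Lagrange equation directly in $E$. You use the same Riesz identity $\langle \mathcal C_0 r,g\rangle_E=\langle r,g\rangle_{L^2(D)}$ that underlies the paper's \eqref{eq:DG_rep2}. From the normal equation, both the membership $h_k-g_k\in\operatorname{span}\{\mathcal C_0 r_i[u_k]\}$ and the definition of $a_k$ follow at once. Since you parametrise by the residual-based vector $a_k$ rather than by $\beta$, the possible singularity of $\mathcal R(u_k)$ never arises. Uniqueness follows from the symmetric positive definite matrix $\mathcal R(u_k)+(1+\alpha_k)\Sigma$, whose invertibility the paper leaves implicit.

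What the paper's decomposition buys is an explicit \emph{representer theorem} picture. The infinite-dimensional subproblem is seen to be exactly a finite-dimensional one on $S$, with $S^\perp$ contributing only the decoupled term $\tfrac{1+\alpha_k}{2}\|b\|_E^2$. Your route is shorter. Through convexity it also establishes the converse, namely that the vector produced by \eqref{eq:MAP_update}--\eqref{eq:MAP_update2} is indeed the minimiser.
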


Provided \(\bar u,u_0\in E\), repeated application of
Theorem~\ref{thm:LM_LMAP_clean} yields a LM iterative
scheme in function space generating a sequence of iterates \(\{u_k\}_{k=0}^K \subset E\).

A notable feature of the reduced formulation in
Theorem~\ref{thm:LM_LMAP_clean} is that both the optimisation step and the
local Gaussian approximation are entirely determined by the representers
\(r_i[u_k]\). Once these functions have been computed, the LM update requires
only the assembly of the reduced \(M\times M\) matrix
\(\mathcal R(u_k)\) and the solution of the finite-dimensional linear system
\eqref{eq:MAP_update2}. The resulting update \(h_k\) is then obtained
explicitly from \eqref{eq:MAP_update}.

Consequently, the dominant computational cost per iteration lies in the evaluation of the
forward model and the construction of the representers, typically through
adjoint solves. All subsequent computations are performed in the reduced data
space \(\mathbb R^M\), independently of the discretisation dimension of the
parameter and state variables. Moreover, the same reduced matrix
\(\mathcal R(u_k)\) governs both the LM update and the LMAP covariance
approximation. This reduced formulation is particularly advantageous in the
practically relevant regime where the number of observations \(M\) is small
compared with the number of parameter and state degrees of freedom. In this
sense, the reduced system may be interpreted as the Schur complement of the
linearised PDE-constrained optimality system (see, e.g. \cite{SchurRef}).

The damping parameter $\alpha_k$ is updated using a simple multiplicative
accept--reject strategy based on the standard LM principle: it is decreased following an accepted step and increased
following a rejected step \cite{LM_original}. Successful iterations reduce the damping,
thereby recovering Gauss--Newton behaviour near a minimiser, while rejected
iterations increase the damping to improve robustness of the linearised
subproblem. The iterations are terminated when the relative change in the
Onsager--Machlup functional falls below a prescribed tolerance. We summarise the resulting procedure in
Algorithm~\ref{alg:LM}. Upon convergence, the
final representers and reduced matrix
\(\mathcal R(u_{\mathrm{MAP}})\) are used to construct the LMAP covariance
operator \(\mathcal C_{\mathrm{MAP}}\), thereby yielding the Gaussian
approximation introduced in \eqref{eq:Gapp} used for uncertainty quantification. 

The remainder of the paper is devoted to the derivation and efficient
computation of the representers for both one- and two-dimensional moving
boundary problems, together with numerical experiments illustrating the
resulting MAP and the corresponding LMAP approximation of the posterior.

\begin{algorithm}[h]
\caption{Reduced Levenberg--Marquardt algorithm}
\label{alg:LM}
\begin{algorithmic}[1]
\Require Initial damping \(\alpha_0>0\), damping factor
\(\gamma\in(0,1)\), tolerance \(\varepsilon_J>0\), maximum number of iterations \(N_{\max}\),
prior mean \(\bar u\in E\)
\State Initialise \(k\gets 0\), \(u\gets \bar u\), \(\alpha\gets \alpha_0\)
\State \textbf{Evaluate forward objective at \(u\):}
\Statex \hspace{\algorithmicindent} (i) Solve \eqref{eq:Darcy0}--\eqref{eq:Darcy2E} to obtain \(p[u]\), \(\Upsilon[u]\), and \(\tau^*[u]\)
\Statex \hspace{\algorithmicindent} (ii) Evaluate the forward map \(\mathcal G(u)\) from \eqref{eq: Forward_Map}
\Statex \hspace{\algorithmicindent} (iii) Compute \(J_{\mathrm{OM}}\gets J_{\mathrm{OM}}(u)\)
    
\While{\(k<N_{\max}\)}
    \State Compute representers \(r_i[u]\), \(i=1,\ldots,M\)
    \State Assemble the reduced matrix \(\mathcal R(u)\) from \eqref{eq:mat_rep}
    \State Compute \(h\) from \eqref{eq:MAP_update}--\eqref{eq:MAP_update2}
    \State Set \(\widehat u\gets u+h\)
    \State \textbf{Evaluate forward objective at \(\widehat u\):}
    \Statex \hspace{\algorithmicindent} (i) Solve \eqref{eq:Darcy0}--\eqref{eq:Darcy2E} to obtain \(p[\widehat u]\), \(\Upsilon[\widehat u]\), and \(\tau^*[\widehat u]\)
    \Statex \hspace{\algorithmicindent} (ii) Evaluate the forward map \(\mathcal G(\widehat u)\) from \eqref{eq: Forward_Map}
    \Statex \hspace{\algorithmicindent} (iii) Compute \(\widehat J_{\mathrm{OM}}\gets J_{\mathrm{OM}}(\widehat u)\)

    \If{\(\widehat J_{\mathrm{OM}}<J_{\mathrm{OM}}\)}
        \If{\(
  \dfrac{
\left|\widehat J_{\mathrm{OM}}-J_{\mathrm{OM}}\right|
}{
\left|J_{\mathrm{OM}}\right|
}
\leq\varepsilon_J
\)
        }
            \State \Return \(\widehat u\)
        \EndIf
        \State \(u\gets \widehat u\)
        \State \(J_{\mathrm{OM}}\gets \widehat J_{\mathrm{OM}}\)
        \State \(\alpha\gets \gamma\alpha\)
    \Else
        \State \(\alpha\gets \alpha/\gamma\)
    \EndIf

    \State \(k\gets k+1\)
\EndWhile
\State \Return \(u\)
\end{algorithmic}
\end{algorithm}

\section{Derivation of representers}\label{sec:representers}

\subsection{One-dimensional case} \label{sec:1D}

In this section we analyse the inverse problem in 1D. In
this setting, the moving boundary problem admits closed-form solutions,
allowing rigorous analysis of the associated forward map. In particular, we
establish well-posedness of the Bayesian inverse problem, existence of MAP
estimators, and explicit representer formulas for the Fréchet derivative of
the observation operator.

\subsubsection{Well-posedness of the Bayesian inverse problem and MAP estimators.}
In 1D, the porous medium occupies the physical domain $D:=(0,x^*)$, with $x^* > 0$. The moving boundary problem \eqref{eq:Darcy0}-\eqref{eq:Darcy2E} reduces to:
\begin{align}
    -\frac{d}{dx}\Big[e^{u(x)}\frac{dp}{dx}(x,t) \Big] &= 0,\ &x \in (0,\Upsilon(t)),\ t>0,\label{eq: 1D_start}\\
    \frac{d\Upsilon}{dt}(t) + \frac{1}{\mu_f\phi}e^{u(\Upsilon(t))}\frac{dp}{dx}(\Upsilon(t),t) &= 0,\ &\Upsilon(0) = 0,\ t \geq 0, \label{eq: 1DB}\\
    p(0,t) &= p_I,\ &t\geq 0, \notag \\
    p(\Upsilon(t),t) &= p_0,\ &t > 0, \notag \\
    p(x,0) &= p_0,\ &x \in (0,x^*] . \label{eq: 1D_end}
\end{align}
If the specified simulation time $T > 0$ is greater than the filling time $\tau^*$ (see eq. \eqref{eq:filltime}), the condition $\Upsilon(t) = x^*$ is applied for each $t \geq \tau^*$. 

For \(u\in X=C(\overline D)\) and \(t\in(0,T]\), the moving boundary problem \eqref{eq: 1D_start}-\eqref{eq: 1D_end} admits closed form solution given by \cite{Advani,MichaelMinho,Tartakovsky}: 
\begin{align}\label{eq: Analytical1D_start}
        \Upsilon[u](t) &= \begin{cases}
            W^{-1}[u]\big(\frac{p_I-p_0}{\mu_f\phi}t\big), &t < \tau^*[u],\\
            x^*, &t \geq \tau^*[u],
        \end{cases}\\
        p[u](x,t) &= \begin{cases} 
        p_I - (p_I - p_0)\frac{F[u](x)}{F[u](\Upsilon[u](t))}, &x \in \Omega[u](t) :=(0,\Upsilon[u](t)),\\
        p_0, &x \in D\backslash \Omega[u](t),
        \end{cases}\label{eq: Analytical1D_end}
    \end{align}
where 
    \begin{align} \label{eq: FandW}
        F[u](x) := \int_0^x e^{-u(z)}\,dz,\ \ \ W[u](x) := \int_0^x F[u](\xi)\,d\xi,
    \end{align}
and the filling time is given by   
\begin{equation} \label{eq:fill1D}
\tau^*[u]=\frac{\mu_{f}\phi}{p_{I}-p_{0}}W[u](x^*).
\end{equation}

The explicit representation \eqref{eq: Analytical1D_start}--\eqref{eq: Analytical1D_end} allows us to establish local regularity of the
forward map and hence well-posedness of the corresponding Bayesian inverse
problem.

\begin{theorem}[Posterior well-posedness and MAP estimators] \label{the:Bayesian1D}
Consider the 1D forward model described by \eqref{eq: 1D_start}--\eqref{eq: 1D_end}, together with the Matérn Gaussian
prior introduced in Section~\ref{sec:Bayes}. Assume that the observation operator
\(\GG:X\to\mathbb R^M\) is defined by \eqref{eq: Forward_Map}. Then the conditional distribution \(u\vert y\) is given by the posterior
measure \(\mu^y\), which is absolutely continuous with respect to \(\mu_0\)
and satisfies \eqref{eq:RN}. Moreover, \(\mu^y\) admits at least one MAP
estimator in the sense of small-ball probabilities, and every such MAP
estimator is characterised as a minimiser of the Onsager--Machlup functional
\(J_{\mathrm{OM}}\) defined in \eqref{eq: Onsager_Machlup}.
 \begin{proof}
By Lemma~A.2 of \cite{Iglesias_2018}, the pressure field depends locally
Lipschitz continuously on the log-permeability field \(u\). Since the
observation kernels \(\mathcal H_i\) belong to \(L^\infty(D_T)\), it follows
directly from \eqref{eq:observation_functionals} that the observation operator
\(\GG:X\to\mathbb R^M\) is locally Lipschitz continuous. Consequently, the data-misfit functional $\Phi_y(u)$ defined in \eqref{eq:likelihood} satisfies Assumption~2.1 of \cite{dashti2013map}. Standard results from Bayesian inverse problems then
imply that the posterior measure \(\mu^y\) is well defined \cite{Stuart}, while
Theorem~3.5 and Corollary 3.10 of \cite{dashti2013map} yield existence and characterisation of MAP estimators, respectively.
\end{proof}
\end{theorem}

\subsubsection{Fréchet derivative and representer-based characterisation.} 
We next derive an explicit representation of the Fréchet derivative of the observation operator. This result forms the basis for the reduced
LM algorithm for MAP estimation and LMAP approximation introduced in Section \ref{sec:reduced}.

\begin{theorem}[Fréchet differentiability and representers]\label{the:G_differentiable}
Let $\mathcal G:X\to \mathbb R^M$ be the forward map whose components are
defined by \eqref{eq:observation_functionals} with $\mathcal H_i\in L^\infty(D_{T})$ for $i=1,\ldots,M$. Then $\mathcal G_i$ is Fréchet differentiable over the CM space $E$. Moreover, for every $h\in E$,
\[
    D\mathcal G_i(u)h
    =
    \langle r_i[u],h\rangle_{L^2(D)},
\]
where
\begin{align}
r_i[u](z)
&=
(p_I-p_0)e^{-u(z)}
\int_0^T
\mathbb{I}_{\{z<\Upsilon(t)\}}
\Bigg[
\frac{1}{F[u](\Upsilon(t))}
\int_z^{\Upsilon(t)} \mathcal H_i(x,t)\,dx
\nonumber\\
&\quad
-
\frac{1}{F[u](\Upsilon(t))^2}
\int_0^{\Upsilon(t)} \mathcal H_i(x,t)F[u](x)\,dx
\nonumber\\
&\quad
+
\mathbb{I}_{\{t<\tau^*[u]\}}
\frac{\Upsilon(t)-z}{F[u](\Upsilon(t))^3} 
e^{-u(\Upsilon(t))}
\int_0^{\Upsilon(t)} \mathcal H_i(x,t)F[u](x)\,dx
\Bigg]\,dt.
\label{eq:ri_simplified}
\end{align}
\end{theorem}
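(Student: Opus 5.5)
The plan is to differentiate the closed-form solution \eqref{eq: Analytical1D_start}--\eqref{eq: Analytical1D_end} directly. The derivative is computed in $X=C(\overline D)$ with the supremum norm and then restricted to $E$, which is continuously embedded in $X$. Throughout, write $\Delta p:=p_I-p_0$ and $c:=\Delta p/(\mu_f\phi)$, and take $h\in X$ small.

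\textbf{Step 1: derivatives of the auxiliary maps.} Since $e^{-(u+h)}=e^{-u}(1-h+O(\|h\|_\infty^2))$ uniformly on $\overline D$, the maps $u\mapsto F[u](x)$ and $u\mapsto W[u](x)$ are Fréchet differentiable, uniformly in $x\in\overline D$, with
\[
\delta F(x)=-\int_0^x e^{-u(z)}h(z)\,dz,\qquad
\delta W(x)=-\int_0^x (x-z)e^{-u(z)}h(z)\,dz .
\]
Fubini gives the second formula. By \eqref{eq:fill1D} the filling time is differentiable, with $\delta\tau^*=c^{-1}\delta W(x^*)$.

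\textbf{Step 2: derivative of the front.} For $t<\tau^*[u]$ the front $\Upsilon(t)$ is defined implicitly by $W[u](\Upsilon(t))=ct$. Here $\partial_x W=F>0$ for $x>0$, so the implicit function theorem (with the uniform bounds of Step 1) gives
\[
\delta\Upsilon(t)=\frac{1}{F(\Upsilon(t))}\int_0^{\Upsilon(t)}(\Upsilon(t)-z)e^{-u(z)}h(z)\,dz .
\]
For $t\ge\tau^*[u]$ the front is fixed at $x^*$, so $\delta\Upsilon(t)=0$. This is the origin of the indicator $\mathbb I_{\{t<\tau^*\}}$.

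\textbf{Step 3: derivative of the pressure.} On $\{x<\Upsilon(t)\}$, use $F'(\Upsilon)=e^{-u(\Upsilon)}$ to get
\[
\delta p=-\Delta p\left[\frac{\delta F(x)}{F(\Upsilon)}-\frac{F(x)\bigl(\delta F(\Upsilon)+e^{-u(\Upsilon)}\delta\Upsilon\bigr)}{F(\Upsilon)^2}\right].
\]
On the dry region $\delta p=0$. The moving domain produces no boundary term, because $p$ is continuous across $\Upsilon(t)$ with value $p_0$ on both sides.

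Next, insert $\delta p$ into \eqref{eq:observation_functionals} and swap the $x$- and $z$-integrations by Fubini. Then:
\begin{itemize}
\item the $\delta F(x)$ term yields $\int_z^{\Upsilon}\mathcal H_i\,dx$;
\item the $\delta F(\Upsilon)$ term yields $-F(\Upsilon)^{-2}\int_0^\Upsilon \mathcal H_iF\,dx$;
\item the $\delta\Upsilon$ term yields the third term, which carries the factor $(\Upsilon-z)$.
\end{itemize}
Each term is multiplied by $e^{-u(z)}h(z)\mathbb I_{\{z<\Upsilon(t)\}}$. Collecting terms gives exactly \eqref{eq:ri_simplified}. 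The result $r_i[u]$ is bounded, hence in $L^2(D)$, since $F(\Upsilon(t))$ is bounded below for $t$ away from $0$. Near $t=0$, the factors $F(\Upsilon)^{-k}$ are compensated by the integrals over $(0,\Upsilon)$, which are $O(\Upsilon)$, and by $\mathbb I_{\{z<\Upsilon\}}$; this needs a short check.

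\textbf{Step 4: Fréchet remainder (main obstacle).} It remains to show $|\mathcal G_i(u+h)-\mathcal G_i(u)-\langle r_i,h\rangle|=o(\|h\|_\infty)$. Away from two exceptional sets, second-order Taylor bounds on the explicit formulas give $O(\|h\|^2)$ uniformly. The exceptional sets are:
\begin{itemize}
\item the strip between $\Upsilon[u](t)$ and $\Upsilon[u+h](t)$, whose width is $O(\|h\|)$ by Step 2;
\item the time window between $\tau^*[u]$ and $\tau^*[u+h]$, whose length is $O(\|h\|)$ by Step 1.
\end{itemize}
On both sets the pressures are Lipschitz in $x$, so $p[u+h]-p[u]$ and the linearisation are each $O(\|h\|)$ there. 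Because $\mathcal H_i\in L^\infty$, each exceptional set therefore contributes $O(\|h\|)\cdot O(\|h\|)=O(\|h\|^2)$.

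I expect the bookkeeping near $t=0$ and around the filling time $\tau^*$ to be the delicate part. I would handle it by splitting the time integral into $(0,\min\tau^*)$, the window between the two filling times, and $(\max\tau^*,T)$. Finally, restricting $h$ to $E\hookrightarrow C(\overline D)$ gives Fréchet differentiability over $E$ with representer $r_i[u]$.
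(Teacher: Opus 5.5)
Your proof is correct, but it is organised differently from the paper's. The paper does not differentiate the closed form itself. It imports from \cite{MCthesis} the Fréchet differentiability of $u\mapsto p[u](x,t)$ at each fixed $(x,t)$ outside the null set $B_u$, which consists of the graph of $\Upsilon[u]$ and the line $t=\tau^*[u]$. It then bounds the difference quotients by $2C_u$ using Lemma~\ref{lem:pressure_lipschitz}, passes to the limit inside $\mathcal G_i$ by dominated convergence, and only afterwards applies Fubini to assemble $r_i$.

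Your Steps~1--3 instead derive that pointwise derivative directly from $\delta F$, $\delta W$ and $\delta\Upsilon$; your $\delta\Upsilon$ is exactly the $\deltao[h]$ of Remark~\ref{rem: consistency1D}. Your Step~4 replaces dominated convergence with an explicit remainder estimate over $h$-dependent exceptional sets of measure $O(\|h\|_\infty)$.

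The paper's route is shorter: its exceptional set is fixed and null, so no uniformity in $(x,t)$ is needed. Yours is self-contained and gives differentiability in the supremum norm at every $u\in X$, hence over $E$. It also gives a quadratic remainder bound when $u$ is Lipschitz.

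Two points need care in the bookkeeping you flagged. First, measure the strip relative to the front, $|\Upsilon[u+h](t)-\Upsilon[u](t)|\le C\,\Upsilon[u](t)\|h\|_\infty$, as in the proof of Lemma~\ref{lem:pressure_lipschitz}. This is necessary because $\partial_xp[u]=-(p_I-p_0)e^{-u}/F[u](\Upsilon(t))$ is of order $1/\Upsilon(t)$, so an absolute width $O(\|h\|_\infty)$ would not control the strip contribution near $t=0$. Second, for merely continuous $u$, $F[u]$ is only $C^1$. Expanding $F[u](\Upsilon[u+h](t))$ therefore leaves a remainder controlled by the modulus of continuity of $e^{-u}$, giving $o(\|h\|_\infty)$ rather than $O(\|h\|_\infty^2)$ on the good set. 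That is still enough for Fréchet differentiability.
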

    \begin{proof}
        See Appendix~\ref{app:G_differentiable}.
    \end{proof}

\subsubsection{Numerical implementation.}
\label{sec:1Dimple}

All 1D numerical experiments presented later in this work were
implemented in MATLAB. The explicit analytical representation
\eqref{eq: Analytical1D_start}--\eqref{eq: Analytical1D_end} enables
efficient evaluation of the forward objective required in Steps~2 and~8
of Algorithm~\ref{alg:LM}.

The space--time domain \(D_T\) is discretised using a uniform grid. For a
given log-permeability field \(u\), the moving interface
\(\Upsilon(t)\) is computed numerically by solving the moving front
equation \eqref{eq: 1DB} using an implicit time-stepping scheme on the
temporal mesh; see Appendix~C of \cite{Iglesias_2018}. Once
\(\Upsilon(t)\) has been obtained, the pressure field \(p[u]\) is
evaluated directly from the analytical expression
\eqref{eq: Analytical1D_end} over the resulting space--time grid. The
corresponding filling time \(\tau^*[u]\) is computed from the explicit
formula following \eqref{eq:fill1D}.

The observation kernels \(\mathcal H_i\) are associated with prescribed
sensor locations and observation times
\(
(x_i,t_i)
\).
Numerically, the observations are approximated through local cell
averages of the pressure field over the cells of the space--time grid
containing the corresponding sensor locations. The Onsager--Machlup
functional \(J_{\mathrm{OM}}\) is then evaluated using midpoint
quadrature over the resulting discretisation. For the computation of the representers, the numerical approximations of
\(p[u]\), \(\Upsilon[u]\), and \(\tau^*[u]\) are substituted directly
into the explicit representation formula
\eqref{eq:ri_simplified}. The reduced LM update
\eqref{eq:MAP_update}--\eqref{eq:MAP_update2}, together with the LMAP
covariance approximation \eqref{eq:CMAP_rep}, are subsequently assembled
using midpoint quadrature.

In Section~\ref{sec:LMAP1D}, we compute numerical MAP estimators together
with their associated LMAP posterior approximations for representative
1D filling problems.


\subsection{Two-dimensional analysis}
\label{sec:2D}

In contrast to the 1D setting considered in
Section~\ref{sec:1D}, explicit analytical representations of the moving
boundary problem
\eqref{eq:Darcy0}--\eqref{eq:Darcy2E} are not available in 2D. To the best of our knowledge, a complete
well-posedness theory for the coupled Darcy moving boundary problem also
remains open. In practice, numerical solvers are typically constructed under the
assumption that, for each fixed time \(t>0\), the pressure field satisfies
a standard elliptic Darcy problem on the current saturated region
\(\Omega(t)\), namely
\(
p(\cdot,t)\in H^1(\Omega(t))
\),
together with the boundary conditions 
\eqref{eq:Darcy2E}, including the condition
\(p=p_0\) on the moving boundary \(\Upsilon(t)\). This yields a standard
finite element problem for the pressure field. Once \(p(\cdot,t)\) has
been computed, the kinematic condition \eqref{eq:Darcy1} is then used to
update the moving front through a control-volume-type transport step; see
for example \cite{MichaelMinho}. While such approaches have proven highly
successful in practice, rigorous convergence theory remains limited.

From the inverse problem perspective, an additional difficulty arises from
the fact that the pressure solution is defined on a moving domain that
depends implicitly on the permeability field itself. Consequently,
perturbations of the parameter field induce coupled perturbations of both
pressure solution and geometry of the evolving interface. This
substantially complicates sensitivity analysis, since standard
differentiation arguments do not apply directly on evolving domains. 
The objective of this section is therefore to develop a variational
framework suitable for shape differentiation, adjoint-based sensitivity
analysis, and the efficient computation of representers for the reduced
LM and LMAP formulations introduced previously.

\subsubsection{Admissible moving domains and geometric velocities.}\label{sec:admissible}

We adopt a shape-calculus perspective in which the evolving geometry is
initially treated as an independent geometric variable. More precisely, we
introduce a class of admissible moving domains together with associated
geometric velocity fields that parameterise the interface evolution
independently of the Darcy velocity appearing in
\eqref{eq:Darcy1}. This temporary decoupling of the geometry and PDE dynamics is introduced
purely at the variational level. The kinematic relation between the
interface velocity and the Darcy flux is subsequently reintroduced weakly
through the variational formulation developed later. The resulting
framework provides a convenient setting for shape differentiation of the
moving boundary problem and for the derivation of the corresponding
linearised and adjoint systems.

Recall that $D$ denotes the hold-all spatial domain and $\Omega(t) \subseteq D$ the evolving saturated region. Its boundary, $\Gamma(t) := \partial \Omega(t)$, consists of a fixed inlet segment $\partial D_I$, a moving boundary $\Upsilon(t) := \Gamma(t)\cap D$ driven by the kinematic condition \eqref{eq:Darcy1}, and the relevant sections of the sealed and outlet boundaries, denoted respectively by
\begin{align}
    \partial D_N^t := \Gamma(t)\cap \partial D_N, \quad \partial D_0^t := \Gamma(t)\cap \partial D_0. \label{eq: sealed_outlet_t}
\end{align}

Let \(\mathcal A\) be the family of admissible spatial domains consisting
of open subsets of \(D\) whose boundaries are piecewise \(C^{1,1}\).
In particular, the moving interface \(\Upsilon(t)\) associated with each
\(\Omega(t)\in\mathcal A\) is assumed to be a \(C^{1,1}\) curve,
possibly consisting of finitely many connected components. Next, let \(\mathcal A_T\) denote the class of admissible space--time
domains whose evolving interfaces are described through a
time-dependent family of embeddings of a fixed reference interface.
Specifically, \(\Omega_T\in\mathcal A_T\) if:
\begin{enumerate}
\item
it admits a representation
\[
\Omega_T
=
\{(x,t)\in D\times[0,T]:x\in\Omega(t)\},
\]
where \(\Omega(t)\in\mathcal A\) for each \(t>0\);
\item
there exist a fixed reference interface \(\widehat\Upsilon\) and a
family of \(C^{1,1}\)-embeddings
\[
\varphi_t:\widehat\Upsilon\longrightarrow\overline D,
\qquad t\in[0,T],
\]
which are bi-Lipschitz onto their images, with bi-Lipschitz constants
uniformly bounded for \(t\in[0,T]\), and such that
\[
t\longmapsto\varphi_t
\in
W^{1,1}\bigl(
0,T;
C^{0,1}(\widehat\Upsilon;\mathbb R^2)
\bigr).
\]
We use the continuous representative of this map on \([0,T]\), which
satisfies
\[
\varphi_0(\widehat\Upsilon)
=
\Upsilon(0)
=
\partial D_I,
\qquad
\partial D_0^0=\varnothing,
\]
and
\[
\varphi_t(\widehat\Upsilon)
=
\Upsilon(t)\cup\partial D_0^t,
\qquad t\in[0,T].
\]
\end{enumerate}

Since
\(
t\longmapsto\varphi_t
\in
W^{1,1}\bigl(
0,T;
C^{0,1}(\widehat\Upsilon;\mathbb R^2)
\bigr)
\), its time derivative exists for almost every \(t\in(0,T)\). We define
the pullback interface velocity by
\[
\widehat{\mathbf w}(X,t)
:=
\partial_t\varphi_t(X),
\qquad
X\in\widehat\Upsilon,
\]
so that
\[
\widehat{\mathbf w}
\in
L^1\bigl(
0,T;
C^{0,1}(\widehat\Upsilon;\mathbb R^2)
\bigr).
\]
The corresponding Eulerian interface velocity is defined by
\[
\mathbf w(\varphi_t(X),t)
=
\widehat{\mathbf w}(X,t)
\]
for almost every \(t\in(0,T)\). Its normal component on the moving front
is
\[
W:=\mathbf w\cdot n
\qquad\text{on }\Upsilon(t).
\]
Since tangential motion of the moving front changes only its
parameterisation, we may, when deriving the shape sensitivities, choose
the parameterisation on \(\Upsilon(t)\) so that
\[
\mathbf w=Wn.
\]
On the portion of the parameterised interface lying on the fixed outlet
boundary, the interface velocity is tangential to \(\partial D_0\), and
therefore
\[
\mathbf w\cdot n_D=0
\qquad\text{on }\partial D_0^t,
\]
where \(n_D\) denotes the outward unit normal to the fixed hold-all
domain \(D\).

When required,
the restriction of \(\mathbf w\) to \(\Upsilon(t)\) may be extended to
a tubular neighbourhood of the moving interface.

The scalar field \(W\) is introduced as the geometric normal velocity of
the moving front associated with the admissible space--time domain
\(\Omega_T\). At this stage, it is not constrained to coincide with the
Darcy velocity \(V\) defined in \eqref{eq:Darcy1}; the coupling between
the geometric evolution and the Darcy flow is instead imposed weakly in
the variational formulation below.

An immediate consequence of the present formulation is that
\(\Upsilon(t)\cup\partial D_0^t\) remains bi-Lipschitz equivalent to the
fixed reference interface \(\widehat\Upsilon\) throughout the evolution.
Since $\varphi_0(\widehat\Upsilon)=\Upsilon(0)$, it is therefore also topologically equivalent to the initial interface
\(\Upsilon(0)\). In particular, splitting, merging, or
self-intersection of the parameterised interface cannot occur. In the context of RTM, this prevents the explicit modelling of
phenomena such as dry spots \cite{Advani,MichaelMinho}. The
assumed \(C^{1,1}\) regularity of $\Upsilon(t)$ also excludes the formation of geometric
singularities such as corners or cusps and therefore limits, at least at
the theoretical level, the applicability of the framework to highly
irregular mould geometries. Nevertheless, many practically relevant defects arise from non-uniform
resin distribution or spatial variations in permeability rather than from
topological changes in the flow domain itself. As
demonstrated later in the numerical examples, the proposed inference
framework remains effective even when applied to data generated from
scenarios in which such prohibited interface behaviours may occur.

\subsubsection{Very weak formulation}

Our aim is now to introduce a mixed variational formulation of
\eqref{eq:Darcy0}--\eqref{eq:Darcy2E} in terms of the pressure field
\(p\), the moving boundary velocity \(V\), and the space--time evolution
\(\Omega_{T}\in\mathcal{A}_T\). To this end, we define the spaces
\begin{align*}
\mathcal P
&:=
L^2(0,T;H^1(D))
\cap L^\infty(D_T),\\
\mathcal P_{\mathrm{amb}}
&:=
L^2(D_T),\\
H^2_{0,\partial D}(D)
&:=
\left\{
\psi\in H^2(D)
\,:\,
\psi=0
\text{ on }
\partial D_I\cup\partial D_0,
\quad
\nabla\psi\cdot n_{D} =0
\text{ on }
\partial D_N
\right\},
\\
\Lambda
&:=L^\infty([0,T];H^2_{0,\partial D}(D)).
\end{align*}
Although the forward pressure \(p\) is sought in \(\mathcal P\), the
pressure component of the parameter-to-state map will be viewed in the
weaker ambient space \(\mathcal P_{\mathrm{amb}}\) when taking
derivatives with respect to the parameter. This distinction is required
because the Eulerian derivative of the hold-all-domain pressure extension
may possess different one-sided traces across the moving interface and
therefore need not belong to \(H^1(D)\), even though
\(p(\cdot,t)\in H^1(D)\) for almost every \(t\in(0,T)\) at each fixed
parameter field.

Furthermore, for each admissible evolution
\(\Omega_T\in\mathcal A_T\), define the moving space--time interface
\[
\Sigma_\Omega
:=
\bigcup_{t\in(0,T)}
\Upsilon(t)\times\{t\},
\qquad
\Upsilon(t):=\partial\Omega(t)\cap D.
\]

Given the interface parameterisation \(\varphi_t\) associated with
\(\Omega_T\), we define
\begin{align*}
\mathcal V(\Omega_T)
:=
\Bigg\{
q:\Sigma_\Omega\to\mathbb R
\;:\;&
\text{there exists }
Q:
\bigcup_{t\in(0,T)}
\bigl(\Upsilon(t)\cup\partial D_0^t\bigr)\times\{t\}
\longrightarrow\mathbb R
\\
&\text{such that }
Q|_{\Sigma_\Omega}=q
\text{ and }
\widehat Q(X,t):=Q(\varphi_t(X),t)\in
L^1\bigl(0,T;H^{3/2}(\widehat\Upsilon)\bigr)
\Bigg\},
\\[0.8em]
\mathcal K(\Omega_T)
:=
\Bigg\{
q:\Sigma_\Omega\to\mathbb R
\;:\;&
\text{there exists }
Q:
\bigcup_{t\in(0,T)}
\bigl(\Upsilon(t)\cup\partial D_0^t\bigr)\times\{t\}
\longrightarrow\mathbb R
\\
&\text{such that }
Q|_{\Sigma_\Omega}=q
\text{ and }
\widehat Q(X,t):=Q(\varphi_t(X),t)\in
H^1\bigl(0,T;H^{3/2}(\widehat\Upsilon)\bigr)
\Bigg\}.
\end{align*}
These spaces are equipped with the quotient norms
\[
\|q\|_{\mathcal V(\Omega_T)}
:=
\inf_{\substack{Q|_{\Sigma_\Omega}=q}}
\|\widehat Q\|_
{L^1(0,T;H^{3/2}(\widehat\Upsilon))}
\]
and
\[
\|q\|_{\mathcal K(\Omega_T)}
:=
\inf_{\substack{Q|_{\Sigma_\Omega}=q}}
\|\widehat Q\|_
{H^1(0,T;H^{3/2}(\widehat\Upsilon))},
\]
where the infima are taken over all admissible extensions \(Q\)
appearing in the definitions above.

The extension \(Q\) is used only to express time-dependent regularity
through pullback to the fixed reference interface
\(\widehat\Upsilon\). The physical element
\(q\in\mathcal V(\Omega_T)\) or
\(q\in\mathcal K(\Omega_T)\) is defined only on the moving front
\(\Sigma_\Omega\); the values of \(Q\) on \(\partial D_0^t\) are
auxiliary.

To formulate the moving boundary problem in a form suitable for shape
differentiation and adjoint analysis, we employ a very weak formulation
inspired by the work of \cite{van2010goal} on free-boundary optimisation
problems. Following this approach, the Darcy equation
\eqref{eq:Darcy0} is tested against \(\lambda\in\Lambda\) and integrated
twice by parts over the evolving space--time domain
\(\Omega_T\). The no-flow condition on \(\partial D_N\) and the
kinematic relation \eqref{eq:Darcy1} are incorporated weakly through the
resulting boundary terms, introducing the interface velocity \(V\)
explicitly into the formulation. The Dirichlet conditions on
\(\partial D_I\), \(\partial D_0\), and \(\Upsilon(t)\) are imposed
weakly through an extension \(p_B\) of the prescribed boundary data. This
leads to the following mixed very weak formulation of
\eqref{eq:Darcy0}--\eqref{eq:Darcy2E}.

\begin{definition}[Very weak formulation]
\label{def:weak}
Given \(u\in E\), find \((p,V,\Omega_{T})\) such that
\[
p\in\mathcal P,
\qquad
V\in\mathcal V(\Omega_{T}),
\qquad
\Omega_{T}\in\mathcal{A}_T,
\]
and
\begin{align}
&
-\int_0^T\int_{\Omega(t)}
p\,\nabla\cdot(e^u\nabla\lambda)\,dx\,dt
+
\int_0^T\int_{\partial\Omega(t)}
p_B e^u\nabla\lambda\cdot n\,ds\,dt
\nonumber\\
&\quad
+
\mu_f\phi
\int_0^T\int_{\Upsilon(t)}
\lambda V\,ds\,dt
+
\int_0^T\int_{D\setminus\overline{\Omega}(t)}
(p-p_0)\lambda\,dx\,dt
\nonumber\\
&\quad
+
\int_0^T\int_{\Upsilon(t)}
(W-V)\kappa\,ds\,dt
=
0
\label{eq:weak1}
\end{align}
for all
\(
(\lambda,\kappa)\in\Lambda\times\mathcal K(\Omega_{T}).
\)

Here, for each admissible evolution
\(\Omega_T\in\mathcal A_T\), we assume that, for almost every
\(t\in(0,T)\), there exists a sufficiently regular lifting
\(p_B(\cdot,t)\in H^2(D)\) of the prescribed Dirichlet boundary data,
satisfying, in the sense of traces,
\[
p_B=p_I
\quad\text{on }\partial D_I,
\qquad
p_B=p_0
\quad\text{on }\partial D_0\cup\Upsilon(t).
\]
We further assume that
\[
p_B\in L^1(0,T;H^2(D))
\]
and that, for every \(\delta>0\),
\[
p_B\in L^\infty(\delta,T;H^2(D)).
\]
\end{definition}

The term integrated over \(D\setminus\overline{\Omega}(t)\) in \eqref{eq:weak1} weakly enforces the
condition \(p=p_0\) in the unsaturated region, while the final term
imposes the kinematic compatibility between the geometric normal velocity
\(W\) and the Darcy velocity \(V\) on the moving interface. For fixed \(u\), \(V\), and \(\Omega_T\), the residual in
\eqref{eq:weak1} also admits a continuous extension with respect to the
pressure variable from \(\mathcal P\) to
\(\mathcal P_{\mathrm{amb}}\). Indeed, the pressure appears only in
volume integrals against functions in \(L^2(D)\). This extension is used
below when defining the Eulerian pressure derivative.

All time integrals in the very weak formulation are defined over the
fixed interval \((0,T)\). Interface integrals are understood to vanish
after complete filling, once $\overline{\Omega}(t)=\overline D$. For a fixed reference solution \(\Omega_T\), these terms may therefore
be written equivalently over the active-interface interval
\((0,T^*)\), where
\begin{equation}
T^*:=\min\{T,\tau^*\},
\label{eq:tstar}
\end{equation}
and \(\tau^*\) is the filling time associated with that reference
solution. Throughout the linearisation and adjoint derivations below,
the residual is differentiated in its original fixed-horizon form over
\((0,T)\). The notation \(T^*\) is used only to indicate that the
reference-state interface contributions vanish for \(t\geq T^*\);
\(T^*\) is not treated as a variable integration limit in these
derivations.

\begin{remark}
For each admissible space--time domain
\(
\Omega_{T}\in\mathcal A_T,
\)
the associated geometric normal velocity \(W\) is determined by the
interface parameterisation \(\varphi_t\) associated with \(\Omega_T\). Consequently, \(W\) is not treated as
an independent state variable of the very weak formulation.
\end{remark}
\begin{remark}\label{rem:smoothness}
The very weak formulation requires sufficient regularity of the coefficient
\(e^u\) for the term
\(\nabla\cdot(e^u\nabla\lambda)\) to be well defined. It is sufficient to
assume \(u\in W^{1,\infty}(D)\). For the Matérn prior considered here,
the CM space satisfies
\(E\simeq H^{\nu+d/2}(D)\), and Sobolev embedding yields
\(E\hookrightarrow W^{1,\infty}(D)\) whenever \(\nu>1\). 
If stronger geometric regularity of the evolving interfaces is required,
one may impose higher smoothness by choosing larger values of \(\nu\).
Establishing regularity properties of the coupled moving boundary
evolution itself, however, lies beyond the scope of the present work.
\end{remark}

The very weak formulation introduced above is primarily motivated by the
derivation of linearisations, adjoint systems, and representer formulae
required for variational inverse methods. While the formulation provides a
natural setting for shape differentiation and sensitivity analysis, a
complete well-posedness theory for the associated coupled moving boundary
problem remains unavailable in the present setting. Accordingly, throughout
the remainder of this work we make the following assumption.

\begin{assumption}
\label{asu:1}
For every \(u\in E\), the very weak formulation from
Definition~\ref{def:weak} admits a unique solution.
\end{assumption}

\subsubsection{Geometric perturbations of the identity.}\label{sec:geometric_perturbations}

To conduct shape sensitivity analysis of the very weak formulation, we introduce admissible one-sided geometric perturbations through
time-dependent vector fields \cite{delfour2011shapes}. We consider
perturbation fields
\(
\deltao \in \Theta_T := C^{0,1}([0,T];\Theta),
\)
where $\Theta$ is the admissible cone
\begin{align}
\Theta
:=
\left\{
\deltao\in C^{0,1}(\overline D;\mathbb R^2)
\,\middle|\,
\deltao\cdot n_D=0
\text{ on }\partial D_I\cup\partial D_N,
\quad
\deltao\cdot n_D\leq0
\text{ on }\partial D_0
\right\}.
\label{eq:admissible_vec}
\end{align}
The boundary conditions imposed on \(\Theta\) ensure that the associated
domain perturbations preserve the fixed inlet and no-flow boundaries,
while remaining contained within the hold-all domain \(D\).

The equality constraints in the definition of \(\Theta\) ensure that
the generated perturbation flow preserves the fixed inlet and no-flow
boundaries. The one-sided condition on \(\partial D_0\) prevents the
perturbed evolution from leaving the hold-all domain \(D\), while
allowing both tangential motion along the outlet and inward motion into
the interior of \(D\). Consequently, \(\Theta\) is an admissible cone
rather than a vector space.

For each fixed \(t\in[0,T]\), a perturbation field
\(\deltao(\cdot,t)\in\Theta\) generates a one-sided flow of
transformations \(T_{s\deltao}(\cdot,t)\) defined by
\[
\frac{\partial}{\partial s}
T_{s\deltao}(x,t)
=
\deltao\bigl(T_{s\deltao}(x,t),t\bigr),
\qquad
T_{0\deltao}(x,t)=x,
\qquad
s\in[0,\bar s].
\]
In particular,
\[
\left.
\frac{\partial}{\partial s}
T_{s\deltao}(x,t)
\right|_{s=0^+}
=
\deltao(x,t).
\]
The corresponding family of perturbed domains is defined by
\begin{equation}
\Omega_s(t)
:=
T_{s\deltao}(\Omega(t)),
\qquad
s\in[0,\bar s].
\label{eq:deltao_family}
\end{equation}
The vector field \(\deltao\) therefore represents the infinitesimal
geometric variation of the moving domain \(\Omega(t)\). Since the
outlet constraint is one-sided, all geometric derivatives below are
understood as directional derivatives at \(s=0^+\).

For any \(\Omega_T\in\mathcal A_T\), the parameterised interface
satisfies
\[
\Upsilon(t)\cup\partial D_0^t
=
\varphi_t(\widehat\Upsilon).
\]
Under the perturbation \eqref{eq:deltao_family}, the parameterised
interface is given by
\[
\bigl(\Upsilon(t)\cup\partial D_0^t\bigr)_s
=
T_{s\deltao}
\bigl(\Upsilon(t)\cup\partial D_0^t\bigr)
=
T_{s\deltao}\circ\varphi_t(\widehat\Upsilon).
\]

For sufficiently small \(\bar s>0\), the flow maps
\(T_{s\deltao}(\cdot,t)\) are bi-Lipschitz for every
\(s\in[0,\bar s]\) and \(t\in[0,T]\)
\cite{delfour2011shapes,van2010goal}. The boundary conditions defining
\(\Theta\) ensure that these maps preserve the fixed inlet and no-flow
boundaries and do not move points outside \(D\).

Define the perturbed interface parameterisation by
\[
\varphi_{t,s}(X)
:=
T_{s\deltao}\bigl(\varphi_t(X),t\bigr),
\qquad
X\in\widehat\Upsilon.
\]
Since
\(
t\longmapsto\varphi_t
\in
W^{1,1}\bigl(
0,T;
C^{0,1}(\widehat\Upsilon;\mathbb R^2)
\bigr)
\)
and
\(
\deltao\in C^{0,1}([0,T];\Theta),
\)
the perturbed parameterisation satisfies
\[
t\longmapsto\varphi_{t,s}
\in
W^{1,1}\bigl(
0,T;
C^{0,1}(\widehat\Upsilon;\mathbb R^2)
\bigr)
\]
for every sufficiently small \(s\in[0,\bar s]\). Hence the perturbed
evolutions retain both the required spatial bi-Lipschitz regularity and
the required absolute continuity in time.

\begin{remark}
The extension $p_{B}$ used in Definition~\ref{def:weak} may be chosen locally using tubular neighbourhoods of the
corresponding boundary components, since the inlet and the moving front
remain disjoint for \(t>0\). We additionally choose \(p_B\) constant along
normal directions in a tubular neighbourhood of
\(\Upsilon(t)\cup\partial D_0\). Consequently, under
admissible interface perturbations, the material derivative of
\(p_B\) vanishes in this neighbourhood, and no additional shape
derivative terms arise from the lifting in the linearisation of
\eqref{eq:weak1}.
\end{remark}

\subsubsection{Formal linearisation.}

Having introduced the admissible geometric perturbations, we now define
the space of Eulerian pressure perturbations. Although the forward
pressure belongs to \(\mathcal P\), its Eulerian derivative is understood
in the weaker ambient space \(\mathcal P_{\mathrm{amb}}\). In particular,
the derivative is zero in the bulk unsaturated region, where the pressure
extension is the fixed constant \(p_0\), but its wet-side trace on the
moving interface need not vanish. We therefore define
\begin{align*}
\mathcal P_0(\Omega_T)
:=
\Bigg\{
q\in L^2(D_T)
\;:\;&
q|_{\Omega(t)}\in H^1(\Omega(t))
\quad\text{for a.e. }t\in(0,T),
\\
&
q=0
\quad\text{a.e. in }
D\setminus\overline{\Omega(t)},
\\
&
q=0
\quad\text{on }
\partial\Omega(t)\cap
\bigl(\partial D_I\cup\partial D_0\bigr),
\\
&
\int_0^T
\|q(\cdot,t)\|_{H^1(\Omega(t))}^2\,dt
<\infty
\Bigg\}.
\end{align*}
Here, the zero value in the unsaturated region is understood as an
\(L^2(D)\)-extension. No matching condition is imposed between this
extension and the trace of \(q|_{\Omega(t)}\) on the moving interface
\(\Upsilon(t)\).

We now derive the formal linearisation of the very weak formulation with
respect to perturbations of the permeability, pressure, interface
velocity, and geometry.
\begin{proposition}[Formal one-sided directional linearisation of the very weak formulation]
\label{prop:LinearisedPhysics}
Let \(u\in E\), and let $z=(p,V,\Omega_{T})$ be the corresponding solution to the very weak form of Definition~\ref{def:weak}. Let $\tau^{*}$ be the corresponding filling time and $T^*$ be defined via \eqref{eq:tstar}. Then the formal one-sided directional linearisation of
\eqref{eq:weak1} at \((u,z)\), in the admissible direction
\[
(h,\deltap,\delta V,\deltao)
\in
E\times\mathcal P_0(\Omega_T)
\times\mathcal V(\Omega_T)\times\Theta_T,
\]
is given by
\begin{align}
0
&=
-\int_0^T\int_{\Omega(t)}
p\,\nabla\cdot(e^u h\nabla\lambda)
\,dx\,dt
+
\int_0^T\int_{\partial\Omega(t)}
p_Be^uh\nabla\lambda\cdot n
\,ds\,dt
\nonumber\\
&\quad
-
\int_0^T\int_{\Omega(t)}
\deltap\,\nabla\cdot(e^u\nabla\lambda)
\,dx\,dt +
\int_0^{T^*}\int_{\Upsilon(t)}
\bigl(\mu_f\phi\lambda-\kappa\bigr)
\delta V
\,ds\,dt
\nonumber\\
&\quad
+
\mu_f\phi
\int_0^{T^*}\int_{\Upsilon(t)}
V(\nabla\lambda\cdot n+c\lambda)\deltao_n
\,ds\,dt
\nonumber\\
&\quad
+
\int_0^{T^*}\int_{\Upsilon(t)}
\kappa
\frac{D}{Dt}(\deltao_n)
\,ds\,dt .
\label{eq:linearised_weak}
\end{align}
for all
\(
(\lambda,\kappa)\in\Lambda\times\mathcal K(\Omega_{T})
\)
where
\(
\deltao_n:=\deltao\cdot n,
\)
\(D/Dt\) denotes the material derivative associated with
the interface velocity field \(\mathbf w\), and
\(
c=\operatorname{div}_{\Upsilon}n
\)
is the curvature of \(\Upsilon(t)\).

\begin{proof}
See Appendix~\ref{app:LinearisedPhysics}.
\end{proof}
\end{proposition}

Given a perturbation direction \(h\in E\), the corresponding formal
first-order state variations are characterised by the variational
system \eqref{eq:linearised_weak}. Their relation to derivatives of the
actual parameter-to-observable map, $\GG$, is specified in
Assumption~\ref{ass:state_diff}.

The principal advantage of the very weak formulation becomes apparent at
the linearisation level. Shape differentiation of moving-domain integrals
naturally generates boundary contributions involving both the traces of
the test functions and their normal derivatives. In standard weak
formulations, such terms do not combine directly into the classical
linearised interface conditions. By contrast, after integrating the Darcy
equation twice by parts, the present formulation reorganises the boundary
terms so that the coefficient of
\(
\nabla\lambda\cdot n
\)
on the moving interface contains the linearised Dirichlet condition
\[
\deltap+(\nabla p\cdot n)\deltao_n.
\]
Consequently, after integrating the variational system by parts, this
coefficient can be identified with the strong moving boundary condition
\[
\deltap+(\nabla p\cdot n)\deltao_n=0
\qquad\text{on }\Upsilon(t).
\]
Similarly, the remaining interface contributions yield the corresponding
linearised flux and kinematic conditions. Thus, the very weak formulation
does not eliminate the geometric boundary terms; rather, it reorganises
them into a structure from which the strong linearised moving boundary
system can be recovered consistently. Indeed, formally integrating the variational system
\eqref{eq:linearised_weak} by parts and eliminating the auxiliary
interface variable \(\delta V\) yield the following strong form of the
linearised moving boundary problem:
\begin{align}
-\nabla \cdot \big[e^{u}h\nabla p + e^{u}\nabla (\delta p)\big]
&= 0,
&&x \in \Omega(t),\quad t>0,
\label{eq: LinearisedStateStart}
\\
\delta p
&= 0,
&&x \in D\backslash\overline{\Omega } (t),\quad t\geq 0,
\label{eq: LinearisedState5}
\\
\delta p + (\nabla p\cdot n)\deltao_n
&= 0,
&&x \in \Upsilon(t),\quad t < T^*,
\label{eq: LinearisedState2}
\\
\delta p
&= 0,
&&x \in \partial D_I\cup\partial D_0,\quad t \geq 0,
\label{eq: LinearisedState3}
\\
\nabla(\delta p)\cdot n_{D}
&= 0,
&&x \in \partial D_N,\quad t \geq 0,
\label{eq: LinearisedState4}
\\
\frac{D(\deltao_n)}{Dt}
+
\frac{1}{\mu_f\phi}
\big[e^{u}h\nabla p + e^{u}\nabla (\delta p)\big]\cdot n
+
cV\deltao_n
&= 0,
&&x \in \Upsilon(t),\quad t < T^*,
\label{eq: LinearisedStateEnd_minus1}
\\
\deltao_n(x,0)
&= 0,
&&x \in \Upsilon(0)=\partial D_I.
\label{eq: LinearisedStateEnd}
\end{align}
The condition \eqref{eq: LinearisedState5} records the zero extension
of the linearised pressure to the unsaturated region encoded in
\(\mathcal P_0(\Omega_T)\); it is not an additional equation for the
wet-region pressure perturbation. In particular, no matching condition
is imposed between the zero exterior extension and the wet-side trace of
\(\delta p\) on \(\Upsilon(t)\).

\subsubsection{Adjoint-based representers.} 
We now derive a reduced representation of the Fréchet derivative of the
observation map \(\mathcal G\) using adjoint techniques. As discussed in
Section~\ref{sec:reduced}, such representations are central to the
efficient implementation of the LM scheme and the LMAP
posterior approximation, as they allow the gradient and Hessian
information to be computed without explicitly solving the linearised
state equations for each perturbation direction.

To this end, we first require differentiability of the forward map with
respect to the log-permeability field. This is formalised in the
following assumption.

\begin{assumption}[Differentiability and sensitivity characterisation]
\label{ass:state_diff}
For each \(u\in E\), let
\(
z[u]=(p[u],V[u],\Omega_T[u])
\)
denote the weak solution of Definition~\ref{def:weak}. Although
\(p[u]\in\mathcal P\) for every \(u\), we regard the hold-all-domain
pressure map as taking values in the weaker ambient space
\(\mathcal P_{\mathrm{amb}}\).

We assume that the map
\[
u\longmapsto p[u],
\qquad
E\longrightarrow\mathcal P_{\mathrm{amb}},
\]
is Fr\'echet differentiable. For every \(h\in E\), we denote its
derivative by
\(
\deltap[h]:=Dp[u]h
\in\mathcal P_0(\Omega_T)\).
We further assume that there exist associated directional interface variations and admissible
one-sided geometric variations
\[
\delta V[h]\in\mathcal V(\Omega_T),
\qquad
\deltao[h]\in\Theta_T,
\]
such that
\(
\bigl(\deltap[h],\delta V[h],\deltao[h]\bigr)
\)
is characterised by the formal directional linearisation
\eqref{eq:linearised_weak}. The geometric variation
\(\deltao[h]\) is understood as an admissible one-sided Eulerian shape
variation at \(s=0^+\).
\end{assumption}

Under Assumption~\ref{ass:state_diff}, and since
\(\mathcal H_i\in L^\infty(D_T)\), the map
\(\mathcal G:E\to\mathbb R^M\) is Fr\'echet differentiable, with
\begin{equation}\label{eq:forconsistency}
    D\mathcal G_i(u)h
=
\int_0^T\int_D
\mathcal H_i(x,t)\,\deltap[h](x,t)\,dx\,dt
=
\int_0^T\int_{\Omega(t)}
\mathcal H_i(x,t)\,\deltap[h](x,t)\,dx\,dt.
\end{equation}
Since \(\deltap[h]\in\mathcal P_0(\Omega_T)\),
its hold-all-domain extension vanishes almost everywhere in
\(D\setminus\Omega(t)\). This yields the second equality in
\eqref{eq:forconsistency}.

Assumption~\ref{ass:state_diff} reflects a common approach in computational
PDE-constrained inversion, where the sensitivities required for
adjoint-based optimisation are obtained by formally linearising the state
equations and deriving the corresponding adjoint equations
\cite{WORTHEN201423,Petra_Zhu_Stadler_Hughes_Ghattas_2012,Petra2,Puel}.
In the present moving-boundary setting, a rigorous differentiability
theory for the parameter-to-state map associated with the coupled
two-dimensional Darcy--interface evolution problem is not currently
available and lies beyond the scope of this work. The formal sensitivity
framework developed here is nevertheless consistent with the
one-dimensional setting studied in Section~\ref{sec:1D}, where explicit
solutions are available and the Fréchet differentiability of the
parameter-to-observable map is established rigorously.

\begin{remark}[Consistency with the 1D formulation]
\label{rem: consistency1D}
Upon reduction of
\eqref{eq: LinearisedStateStart}--\eqref{eq: LinearisedStateEnd}
to 1D, where
\(D=(0,x^*)\) and geometric quantities such as curvature and the unit
normal simplify, the linearised state admits the explicit solution
\begin{align}
\delta p[h](x,t)
&=
\mathbb{I}_{\{x \le \Upsilon (t)\}}
\frac{p_I-p_0}{F[u](\Upsilon (t))^2}
\Bigg[
F[u](\Upsilon (t))
\int_0^x e^{-u}h d \xi
-
F[u](x)
\int_0^{\Upsilon (t)}e^{-u}h d \xi
\nonumber\\
&\quad
+
\mathbb{I}_{\{t \le \tau^*\}}
F[u](x)e^{-u (\Upsilon (t))}
\deltao[h](t)
\Bigg],
\nonumber\\
\deltao[h](t)
&=
\frac{1}{F[u](\Upsilon (t))}
\int_0^{\Upsilon (t)}
\int_0^\xi e^{-u}h d z d \xi.
\end{align}
Substituting these expressions into the right-hand side of
\eqref{eq:forconsistency} yields exactly the Fréchet derivative obtained
rigorously in Theorem~\ref{the:G_differentiable}. This provides a
consistency check for the proposed 2D variational and
shape-differentiation framework.
\end{remark}

Under Assumption~\ref{ass:state_diff}, the linearised system
\eqref{eq:linearised_weak} provides a state-sensitivity
characterisation of the Fr\'echet derivative of the observation map
through the perturbation variables
\((\deltap,\delta V,\deltao)\). However, directly solving the linearised system for every perturbation
direction \(h\in E\) would be computationally prohibitive within the
iterative optimisation framework introduced earlier. The purpose of the
adjoint formulation is therefore to eliminate the linearised state
variables and obtain a reduced representation of the Fréchet derivative
through a collection of observation-dependent representers.

\begin{theorem}[Adjoint characterisation of representers]
\label{thm:adjoint_rep_2D}
Let \(u\in E\) and let
\(
z=(p,V,\Omega_{T})
\)
be the corresponding solution to the very weak formulation of
Definition~\ref{def:weak} and $T^*$ defined by \eqref{eq:tstar}. Suppose that Assumption~\ref{ass:state_diff} holds. For each \(i=1,\ldots,M\), let
\[
(\lambda_i,\kappa_i)
\in
\left\{
(\lambda,\kappa)\in\Lambda\times\mathcal K(\Omega_T)
:
\kappa(\cdot,T^*)=0
\right\}
\]
denote an adjoint pair satisfying
\begin{align}
&-
\int_0^T\int_{\Omega(t)}
\deltap\,
\nabla\cdot\bigl(e^u\nabla\lambda_i\bigr)
\,dx\,dt
+
\int_0^{T^*}\int_{\Upsilon(t)}
(\mu_f\phi\lambda_i-\kappa_i)\delta V
\,ds\,dt
\nonumber\\
&\quad
-
\int_0^{T^*}\int_{\Upsilon(t)}
\left[
\frac{D\kappa_i}{Dt}
+
cV(\kappa_i-\mu_f\phi\lambda_i)
-
\mu_f\phi V\nabla\lambda_i\cdot n
\right]
\deltao_n
\,ds\,dt
\nonumber\\
&=
\int_0^T\int_{\Omega(t)}
\mathcal H_i(x,t)\,\deltap(x,t)
\,dx\,dt
\label{eq:adjoint}
\end{align}
for all
\(
(\deltap,\delta V,\deltao)
\in
\mathcal P_0(\Omega_T)
\times\mathcal V(\Omega_T)\times\Theta_T\). Assume additionally that the adjoint solution has sufficient regularity
such that
\(
\nabla\lambda_i
\in
L^\infty(D_T)
\). Then the Fréchet derivative of the observation functional
\(\mathcal G_i\) admits the representation
\[
D\mathcal G_i(u)h
=
\langle r_i[u],h\rangle_{L^2(D)},
\qquad h\in E,
\]
where
\begin{align}
\label{eq: adj_Representer_i}
r_i[u](x)
:=
-
e^{u(x)}
\int_0^T
\mathbb I_{\Omega(t)}(x)\,
\nabla p[u](x,t)\cdot
\nabla\lambda_i(x,t)\,dt .
\end{align}
Under the stated regularity assumption,
\begin{align*}
\|r_i[u]\|_{L^2(D)}
&\leq
\|e^u\|_{L^\infty(D)}
\|\nabla\lambda_i\|_{L^\infty(D_T)}
T^{1/2}
\|\nabla p[u]\|_{L^2(D_T)}.
\end{align*}
Consequently, \(r_i[u]\in L^2(D)\).
\begin{proof}
See Appendix~\ref{app: Representers2D}.
\end{proof}
\end{theorem}
The representer \(r_i[u]\) encodes the sensitivity of the \(i\)-th
observation functional with respect to perturbations of the
log-permeability field. Importantly, once the adjoint variables
\((\lambda_i,\kappa_i)\) have been computed, the Fréchet derivative is
evaluated through a single \(L^2(D)\) inner product, thereby avoiding the
explicit solution of the linearised moving boundary problem for each
parameter perturbation direction \(h\).

\begin{remark}
The adjoint system \eqref{eq:adjoint} can alternatively be derived by
expressing the LM functional
\eqref{eq:LM_cost} in terms of the linearised state variable
\(\delta p\) (i.e.\ via \eqref{eq:forconsistency}) and constructing a
Lagrangian based on the very weak formulation of the moving boundary
problem. The corresponding first-order optimality conditions then yield
an equivalent adjoint system upon elimination of the state perturbations.
While this approach is conceptually natural from an optimisation
perspective, it leads to a more involved derivation due to the weak
coupling of bulk and interface variables and the presence of geometric
terms. For completeness, this alternative route is detailed in
\cite{MCthesis}.
\end{remark}

\subsubsection{Numerical implementation and computational costs.}\label{sec:2Dimple}
While the forward model is introduced through the very weak variational
formulation in Section~\ref{sec:2D}, the numerical solution of the moving
boundary problem is carried out using the RTM-Flow MATLAB tool
\cite{Zenodo}, which implements a control volume finite element method
(CVFEM) following \cite{MichaelMinho,Advani,discontinuity_CVFEM}. At each iteration of Algorithm~\ref{alg:LM}, the current estimate of the
log-permeability field \(u\) is used to solve the forward moving boundary
problem. The CVFEM solver provides numerical approximations of the
pressure field \(p[u]\), the Darcy velocity, the moving interface
\(\Upsilon[u]\), the saturated domain \(\Omega[u]\), and the associated
filling time \(\tau^*[u]\) (and hence \(T^*[u]\)). These quantities are
then used to evaluate the observation operator and the
Onsager--Machlup functional, construct the adjoint representers, and
assemble the reduced LM update.

For the numerical experiments, observations are evaluated at the
prescribed sensor locations and observation times. In the adjoint
discretisation, the spatial Dirac mass at a sensor is represented by the
finite-element nodal load vector obtained from the barycentric
coordinates of the sensor within its containing element, while temporal
localisation is implemented using a Gaussian mollifier on the time
grid. This provides a discrete counterpart of the regularised
observation functionals introduced in Section~2.1. Further
implementation details are provided in the accompanying code.

The numerical approximations of
\(
p[u], \Upsilon[u], \Omega[u], \tau^*[u]
\)
provided by RTM-Flow constitute all quantities required for the solution
of the adjoint equations and the computation of the representers from
\eqref{eq: adj_Representer_i}. Rather than discretising the very weak
formulation directly, however, we employ the adjoint equations in
their corresponding strong form. Indeed, from the weak adjoint
formulation \eqref{eq:adjoint}, one formally obtains the system
\begin{align}
-\nabla \cdot \big(e^{u}\nabla\lambda_i\big)
&= \mathcal{H}_i,
&&x \in \Omega(t),\ t > 0,
\label{eq: Adjoint_i_1}
\\
\lambda_i &= \kappa_i/(\mu_f\phi),
&&x \in \Upsilon(t),\ t < T^*,
\label{eq: Adjoint_i_3}
\\
\lambda_i &= 0,
&&x \in \partial D_I\cup \partial D_0,\ t \geq 0,
\label{eq: Adjoint_i_4}
\\
\nabla\lambda_i\cdot n_{D} &= 0,
&&x \in \partial D_N,\ t \geq 0,
\label{eq: Adjoint_i_5}
\\
\frac{D\kappa_i}{Dt}
+ e^{u}(\nabla p \cdot n)(\nabla\lambda_i\cdot n) &= 0,
&&x \in \Upsilon(t),\ t < T^*,
\label{eq: Adjoint_i_6}
\\
\kappa_i(x,T^*) &= 0,
&&x \in \Upsilon(T^*).
\label{eq: Adjoint_i_7}
\end{align}
These equations are solved numerically by reusing and adapting components
of the CVFEM discretisation employed for the forward problem. In
particular, the elliptic problem for \(\lambda_i\) is discretised on the
same mesh and control volumes as the forward pressure equation, while the
interface evolution equation for \(\kappa_i\) is solved along the moving
front using the same geometric representation of \(\Upsilon(t)\). Further
implementation details, including the treatment of the moving interface,
time discretisation, and coupling between bulk and interface variables,
are provided in Appendix~\ref{app:adjoint_eqns}.

The computational cost of the adjoint computations remains relatively
modest compared to the forward moving boundary solve. The forward problem
\eqref{eq:Darcy0}--\eqref{eq:Darcy2E} is nonlinear due to the coupling
between the Darcy equation and the evolving interface, requiring repeated
updates of the saturated region and moving front. In contrast, once the
forward quantities
\(
p[u], \Upsilon[u], \Omega[u]
\)
have been computed, the adjoint equations
\eqref{eq: Adjoint_i_1}--\eqref{eq: Adjoint_i_7} are linear and may be
solved independently for each observation index \(i\).

Several aspects of the implementation further reduce the
computational burden. First, stiffness matrices assembled during the
forward solve may be reused, since the interface evolution is already
known. Second, the adjoint systems differ only through the source term
\(\mathcal H_i\) in \eqref{eq: Adjoint_i_1}, making the computations
naturally parallelisable across observations. Third, many adjoint solves
may be discarded entirely: if the observation location lies outside the
saturated region, i.e.
\(
x_i\notin\Omega(t_i),
\)
then the mollifier \(\mathcal H_i\) is negligible over the solution
domain and the corresponding adjoint solution remains effectively zero. Similarly, when solving the adjoint equations backward in time, the source
term remains negligible until the temporal mollifier centred at
\(t=t_i\) becomes active. Consequently, the numerical integration need
only commence a small number of time steps before \(t_i\), further
reducing the computational cost.

An additional computational saving arises during Step~8 of
Algorithm~\ref{alg:LM}. If a proposed update \(\widehat u\) fails to
decrease the Onsager--Machlup functional, the adjoint variables and
representers need not be recomputed, since these depend only on the
current iterate \(u\). It therefore suffices to increase the damping
parameter \(\alpha\) and recompute the reduced
Levenberg--Marquardt update
\eqref{eq:MAP_update}--\eqref{eq:MAP_update2}. In the numerical implementation, this backtracking procedure is
parallelised. At each iteration, several candidate updates corresponding
to the damping parameters
\[
\alpha,\quad
\alpha/\gamma,\quad
\alpha/\gamma^2,\quad
\alpha/\gamma^3,\quad
\alpha/\gamma^4
\]
are computed simultaneously, and the associated forward moving boundary
problems are solved in parallel. The accepted update is then chosen as
the candidate yielding the smallest value of
\(J_{\mathrm{OM}}\). Although this increases the number of forward solves
per iteration, it substantially reduces idle time associated with serial
rejection of unsuccessful updates.

Finally, solving the adjoint equations and constructing the representers
require storing approximations of the forward quantities
\(
p[u],\Upsilon[u],\Omega[u]
\)
throughout the forward simulation. While this memory requirement was not
restrictive for the examples considered here, it may become significant
for large-scale three-dimensional simulations.

\section{Numerical results}\label{sec:numerics}

The numerical experiments with synthetic data are designed to assess both
the reconstruction accuracy of the MAP estimator and the quality of the
LMAP covariance approximation as a local characterisation of posterior
uncertainty. In particular, we compare the proposed reduced adjoint-based
framework with derivative-free approaches such as ensemble Kalman
inversion (EKI), while also investigating the effect of increasing
observational information over time.
The code is available at  \url{https://github.com/ChaffyHurdle/RTM-flow/tree/lmap}.

\subsection{Experimental setup}

For the principal 2D benchmark, the physical domain \(D\)
is taken to be the unit square. The annulus and fork configurations are
introduced separately in Section~\ref{sec:complex_geometries}. Fluid enters through the inlet boundary
\(\partial D_I\) located on the left edge and exits through the outlet
boundary \(\partial D_0\) on the right edge, while the top and bottom
edges form the no-flow boundary \(\partial D_N\) (see
Figure~\ref{fig: setup_eg}). In the 1D setting,
\(D=(0,1)\), with the left endpoint corresponding to the inlet and the
right endpoint to the outlet.

For all experiments, the prior measure is chosen as a Gaussian Matérn
field with hyperparameters
\[
(\sigma,l,\nu)=(0.5,0.1,1.5),
\]
and prior mean \(\bar u=0\). This choice is consistent with the
regularity requirements discussed in
Remark~\ref{rem:smoothness}, since \(\nu>1\) ensures sufficient
regularity for the very weak formulation. The prior covariance operator
is constructed through discretisation of the Matérn correlation kernel.

\subsection{Measurement locations and observation times}

The observational data are determined by a collection of
\(N_s\) spatial sensor locations
\(
\{x_i\}_{i=1}^{N_s}\subset D
\)
and \(N\) observation times
\(
\{t_n\}_{n=1}^{N}
\).
For each observation time \(t_n\), we denote by
\[
y^{(n)}\in\mathbb R^{N_s}
\]
the vector of measurements collected from all sensors at time \(t_n\).

For each \(n\), we define
\[
\mu_n
=
\mu\big(u\,|\,y^{(1)},\dots,y^{(n)}\big)
\]
as the posterior distribution conditioned on all observations available
up to time \(t_n\). The corresponding Gaussian approximation is computed
using Algorithm~\ref{alg:LM} with
\(M=nN_s\) observations:
\[
\mu_n
\approx
\mathcal N\big(
u_{\mathrm{MAP}}^{(n)},
\mathcal C_{\mathrm{MAP}}^{(n)}
\big).
\]

We emphasise that this does not correspond to a sequential Bayesian
filtering problem. Rather, for each \(n\) we solve an independent
Bayesian inverse problem with the same prior measure but with an
increasing amount of observational information as additional measurement
times are included.

In practice, measurements should be distributed throughout the moving
boundary regime \(t<\tau^*\) in order to enable online permeability
estimation while avoiding redundant observations after complete filling
of the domain. Since the filling time \(\tau^*\) depends on the unknown
permeability field, however, it is not known \emph{a priori}. We
therefore select observation times using the prior mean field
\(\bar u\) as a reference configuration.

Observation times are selected to correspond to prescribed fractions of
the nominal filling progression. Specifically, for
\(\zeta_n\in(0,1]\), we interpret \(\zeta_n\) as the fraction of the
unit filling length reached at the \(n\)th observation time. Using the
1D filling-time relation \eqref{eq:fill1D}, evaluated at the
constant prior mean \(\bar u=0\), gives
\[
t_n
=
\frac12
\frac{\mu_f\phi\,\zeta_n^2}{p_I-p_0}.
\]
The fractions \(\zeta_n\) used in the numerical experiments are specified
in Table~\ref{tab:parameters}. This choice distributes the observations
according to the progression of the filling process rather than uniformly
in time. Although the expression above is derived from the 1D
solution, it also applies to the 2D reference configuration:
for constant permeability and the uniform inlet and outlet conditions
considered here, the solution is independent of the transverse coordinate
and the filling dynamics reduce to the corresponding 1D problem.

\subsection{Generation of synthetic data}

For all experiments, the true log-permeability field is sampled from the
prior distribution,
\(
u^\dagger\sim\mu_0,
\)
and represented as piecewise constant over a fine computational mesh. The
forward model is then solved using the physical parameters
\(
(\mu_f,\phi,p_I,p_0)
\)
given in Table~\ref{tab:parameters}, generating synthetic pressure
observations through the kernels \(\mathcal H_i\) introduced in
Sections~\ref{sec:1Dimple} and \ref{sec:2Dimple}.

Observational noise is added according to
\(
\eta\sim\mathcal N(0,\Sigma),
\)
where \(\Sigma\) is chosen diagonal with entries
\[
\Sigma_{ii}
=
\big(
\chi\max_{j=1,\ldots,M} \mathcal G_j(u^\dagger)
\big)^2.
\]
The corresponding noise levels, discretisation parameters, and
LM settings
\(
(\alpha_0,\gamma,\varepsilon_J,N_{\max})
\)
used throughout the experiments are summarised in
Table~\ref{tab:parameters}.

To avoid inverse crimes, the synthetic observations are generated on a
finer computational mesh than that employed for inversion. The LM algorithm is initialised at \(u_0=\bar u=0\). At the discrete
level, the updates lie in the range of the discretised prior covariance,
which is the finite-dimensional analogue of the CM space. All computations were performed in MATLAB (R2024a) on a single shared-memory node (64 CPU cores, 128~GB RAM) on Ada, the University of Nottingham's HPC facility,  using multithreaded execution. 


\begin{table}[ht]
\centering
\caption{Numerical experiment parameters for the 1D and 2D test cases.}
\label{tab:parameters}
\renewcommand{\arraystretch}{1.2}
\begin{tabular}{lll}
\hline
\textbf{Parameter} & \textbf{1D Case} & \textbf{2D Case} \\
\hline
$\zeta_n$ 
& $0.2,\;0.4,\;0.6,\;0.8,\;1.0$
& $0.2,\;0.375,\;0.55,\;0.725,\;0.9$ \\

$T$
& $0.5$
& $0.45$ \\

Domain $D$
& $[0,1]$
& $[0,1]^2$\\

$(\mu_f,\phi,p_I,p_0)$
& $(1,1,2,1)$
& $(1,1,2,1)$\\

$N_s$
& $20$
& $25$ \\

$\chi$
& $0.01$
& $0.005$ \\

Resolution for inversion
& 250 cells
&3386 elements\\

Resolution for synthetic data
& 1000 cells
&5458 elements\\

$(\alpha_0,\gamma,\varepsilon_J,N_{\max})$
& $(10^4,0.2,0.01,50)$
& $(10^3,0.5,0.025,50)$\\

\hline
\end{tabular}
\end{table}

\subsection{1D results}
\label{sec:LMAP1D}

Figure~\ref{fig: Experiment1D} displays the true log-permeability field
\(u^\dagger\) together with the Gaussian posterior approximations
\(\{\mu_n\}_{n=1}^5\). For reference, the prior distribution
\(\mu_0\) is also shown before any observations are incorporated. For
each posterior approximation, we display the posterior mean
(i.e.\ the MAP estimator)
\(u_{\mathrm{MAP}}^{(n)}\),
together with the associated \(50\%\) and \(95\%\) pointwise credible
intervals.

The moving front locations
\(
\{\Upsilon[u^\dagger](t_n)\}_{n=1}^5
\)
corresponding to the true solution are indicated by the dashed red lines.
As additional measurements from later observation times are incorporated,
the posterior uncertainty decreases and the posterior mean progressively
approaches the true permeability field. Since pressure observations are
informative only within the saturated region, uncertainty reduction occurs
primarily behind the advancing front. Once the domain becomes fully
saturated and all observations have been assimilated, the LMAP posterior
approximation accurately captures the true permeability field within the
posterior credible intervals.

In all 1D experiments, the inversion converged in at most
14 forward solves and required less than \(1.5\) seconds of total
computation time.

\begin{figure}
    \centering
    \includegraphics[width=0.85\linewidth]{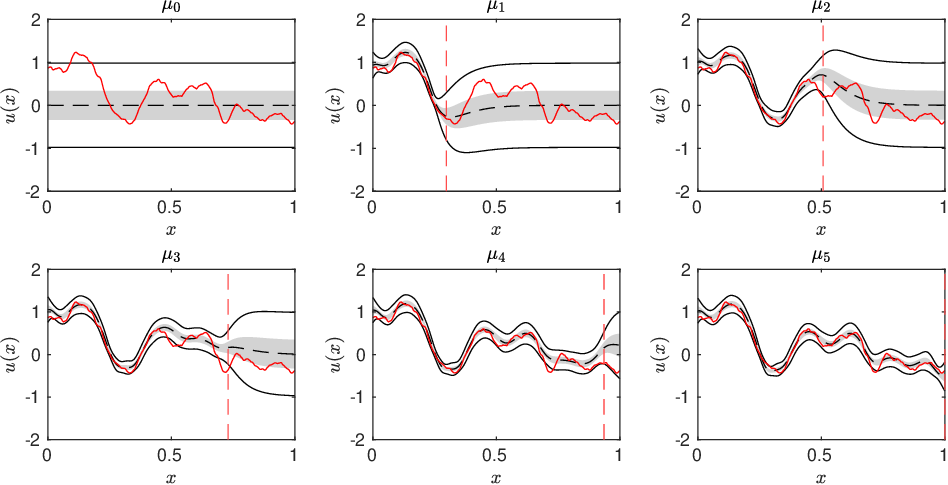}
    \caption{True log-permeability function, compared with sequence of posterior approximations to $\{\mu_i\}_{i=1}^5$ generated by LMAP.}
    \label{fig: Experiment1D}
\end{figure}

\subsubsection{Comparison with EKI and MCMC.} \label{subsec: LMAP1D_comparison}
To assess the computational efficiency and accuracy of the proposed LMAP
approximation, we compare the computation of the final posterior
approximation \(\mu_5\) from the previous experiment against both EKI and a reference posterior computed using
preconditioned Crank--Nicolson MCMC (pCN-MCMC).

The EKI implementation follows Algorithm~3 of \cite{MarcoYang}, using
ensemble sizes of \(500\), \(1000\), and \(5000\). As in \cite{GaussianApproximations}, the accuracy of each Gaussian
approximation is evaluated relative to a highly resolved reference
posterior obtained using pCN-MCMC \cite{pcnMCMC}. Specifically,
Algorithm~1 of \cite{GaussianApproximations} was run using
\(1.28\times10^6\) samples. The proposal scale was adapted during burn-in to obtain an acceptance
rate close to \(30\%\). It was then held fixed throughout the retained
sampling phase. After discarding the burn-in phase, a total of
\(
J_{\mathrm{pcn}}=10^6
\)
samples were retained.

The empirical posterior mean and standard deviation associated with the
pCN-MCMC samples
\(
\{u^{(j)}\}_{j=1}^{J_{\mathrm{pcn}}}
\)
are given by
\begin{align}
\bar u^{(\mathrm{pcn})}(x)
&=
\frac{1}{J_{\mathrm{pcn}}}
\sum_{j=1}^{J_{\mathrm{pcn}}}
u^{(j)}(x),
\\
\sigma^{(\mathrm{pcn})}(x)
&=
\sqrt{
\frac{1}{J_{\mathrm{pcn}}-1}
\sum_{j=1}^{J_{\mathrm{pcn}}}
\Big(
u^{(j)}(x)-\bar u^{(\mathrm{pcn})}(x)
\Big)^2
}.
\end{align}
These are treated as reference approximations of the posterior mean and
standard deviation associated with \(\mu_5\).

Let
\(
\bar u^{(\mathrm{LMAP})}
\)
and
\(
\bar u^{(\mathrm{EKI})}
\)
denote the posterior mean estimates obtained using LMAP and EKI,
respectively, with corresponding standard deviation estimates
\(
\sigma^{(\mathrm{LMAP})}
\)
and
\(
\sigma^{(\mathrm{EKI})}
\).
The quality of each approximation is assessed using the relative
\(L^2(D)\)-errors
\begin{align}
\mathcal E_{\mathrm{mean}}^{(\cdot)}
&=
\frac{
\|
\bar u^{(\mathrm{pcn})}
-
\bar u^{(\cdot)}
\|_{L^2(D)}
}{
\|
\bar u^{(\mathrm{pcn})}
\|_{L^2(D)}
},\quad
\mathcal E_{\mathrm{std}}^{(\cdot)}
=
\frac{
\|
\sigma^{(\mathrm{pcn})}
-
\sigma^{(\cdot)}
\|_{L^2(D)}
}{
\|
\sigma^{(\mathrm{pcn})}
\|_{L^2(D)}
}.
\end{align}

The parallelisable structure of both pCN-MCMC and EKI was exploited in
the numerical experiments. For pCN-MCMC, 64 independent Markov chains
were run simultaneously across 64 CPU cores. Each chain consisted of
\(2\times10^4\) samples, with the first \(4375\) discarded as burn-in,
and the remaining samples pooled together to form the final
\(10^6\) posterior samples. For EKI, the ensemble forward solves at each
iteration were distributed across the available CPU cores.

For the 1D LMAP experiments, parallelisation of the
representer computations was not exploited, since the representers were
obtained through explicit analytical evaluations rather than by solving
adjoint PDE systems. Nevertheless, in higher-dimensional settings where
representers are computed via independent adjoint solves, the framework
naturally admits substantial parallelisation across observations. While the computational cost of EKI and pCN-MCMC may be further reduced
through additional parallel processes, this necessarily comes at the
expense of increased computational resources. 

Figure~\ref{fig: Comparison1D} shows that the posterior distributions
produced by LMAP, EKI, and pCN-MCMC are qualitatively very similar. This
observation is further supported by
Table~\ref{tab: Comparison1D}, which compares the computational cost and
relative approximation errors of each method with respect to the
reference posterior. These results demonstrate that accurate Gaussian approximations of the
posterior distribution can be obtained without resorting to substantially
more expensive sampling-based methods. This is particularly important
for time-sensitive applications such as resin transfer moulding, where
rapid permeability estimation is essential for enabling real-time process
monitoring and active control. While both LMAP and EKI provide dramatic
reductions in computational cost relative to pCN-MCMC, the proposed LMAP
framework achieves accuracy comparable to large-ensemble EKI while
requiring approximately three orders of magnitude fewer forward model
evaluations. Note that for the 1D setting, the additional representer evaluations
required by LMAP correspond only to evaluations of the explicit formula
\eqref{eq:ri_simplified} and therefore contribute negligibly to the total
computational cost relative to the forward solves.

\begin{figure}
    \centering
    \includegraphics[width=0.9\linewidth]{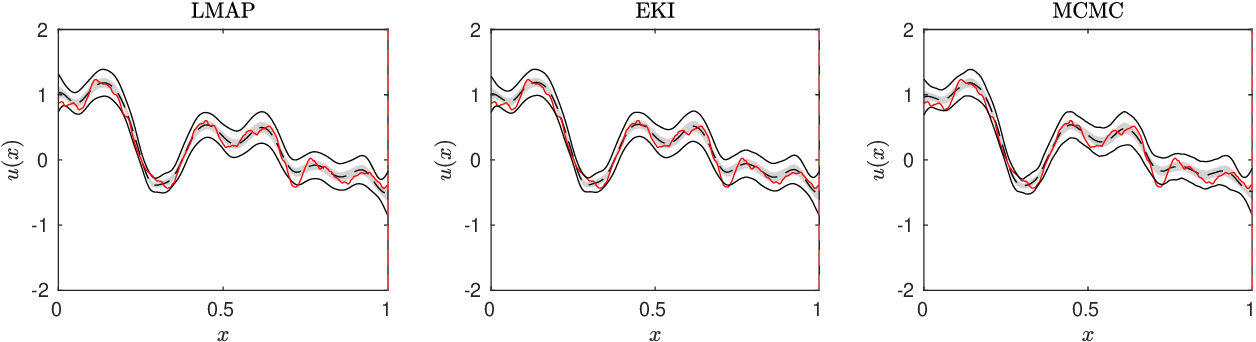}
    \caption{A comparison of each Gaussian approximation, computed using LMAP and EKI ($J_{\text{EKI}}=5000$) with the reference posterior approximation computed using pCN--MCMC.}
    \label{fig: Comparison1D}
\end{figure}

\begin{table}
\centering
\caption{Accuracy and speed of each Gaussian approximation, compared to MCMC.}
\begin{tabular}{ |c||c|c|c|c|c|c| } 
 \hline
 Algorithm 
 & \shortstack{$\mathcal{E}_{\text{mean}}^{(\cdot)}$\\ \%}
 & \shortstack{$\mathcal{E}_{\text{std}}^{(\cdot)}$\\ \%}
 & \shortstack{Forward\\ solves}
 & \shortstack{Representer\\ evaluations}
 & \shortstack{Multi-\\processing}
 & \shortstack{Computation\\ time}
 \\
 \hline
 LMAP & $\mathbf{3.43}$ & $\mathbf{5.11}$ & $\mathbf{1.30 \times 10^1}$ & $1.3\times10^3$ & \ding{56} & \textbf{1.43 secs}\\ 
 EKI (500) & 4.68 & 6.24 & $2.50\times 10^3$ & -- & \ding{51} & 15.2 secs\\
 EKI (1000) & 4.08 & 5.81 & $5.00\times 10^3$ & -- & \ding{51} & 28.9 secs\\
 EKI (5000) & 3.87 & 5.48 & $2.50\times 10^4$ & -- & \ding{51} & 2.34 mins\\
 pCN-MCMC & -- & -- & $1.28\times 10^6$ & -- & \ding{51} & 1.87 hrs\\
 \hline
\end{tabular}
\label{tab: Comparison1D}
\end{table}

\subsection{2D Results}
\label{sec: LMAP2D}

\begin{figure}
    \centering
    \includegraphics[width=0.95\textwidth]{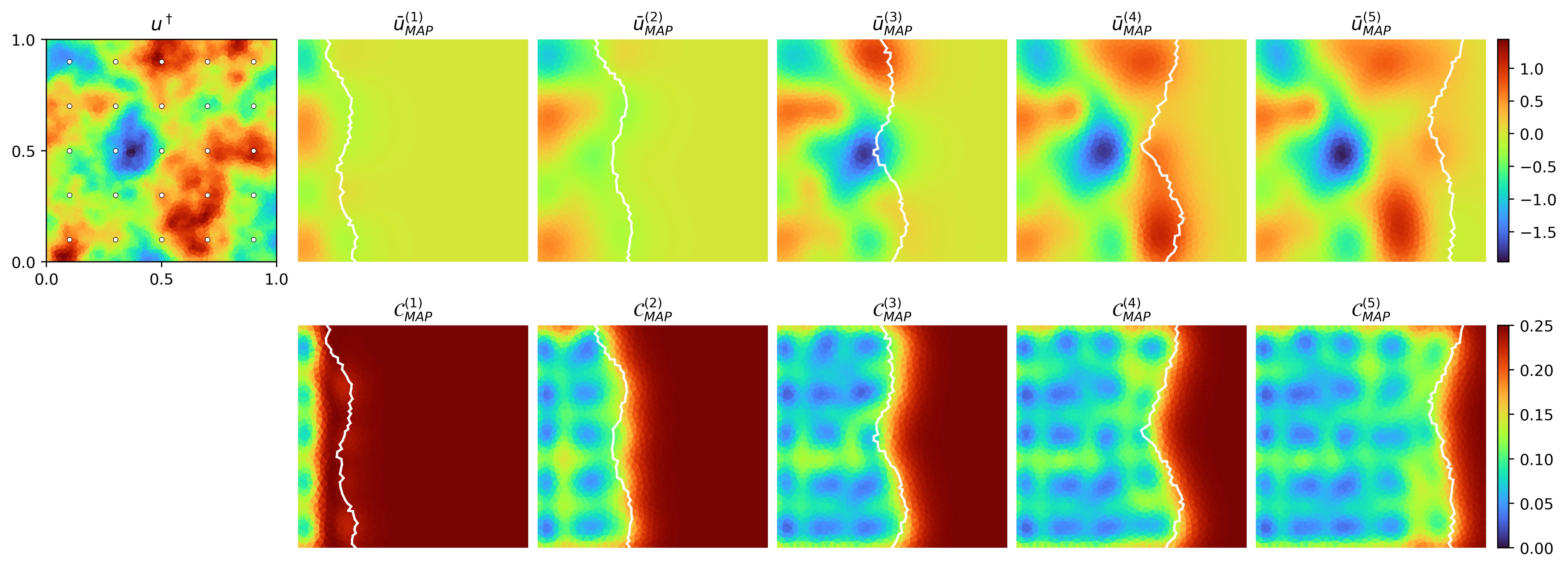}
    \caption{Sequence of LMAP approximations to \(u^\dagger\) using the 25 sensors (white). MAP estimates are shown in the top row, with the corresponding posterior variance fields shown below. The true front locations \(\{\Upsilon[u^\dagger](t_i)\}_{i=1}^5\) are overlaid in white.}
    \label{fig: sequential_LMAP2D}

    \includegraphics[clip,trim=0cm 0cm 0cm 0cm,width=0.39\linewidth]{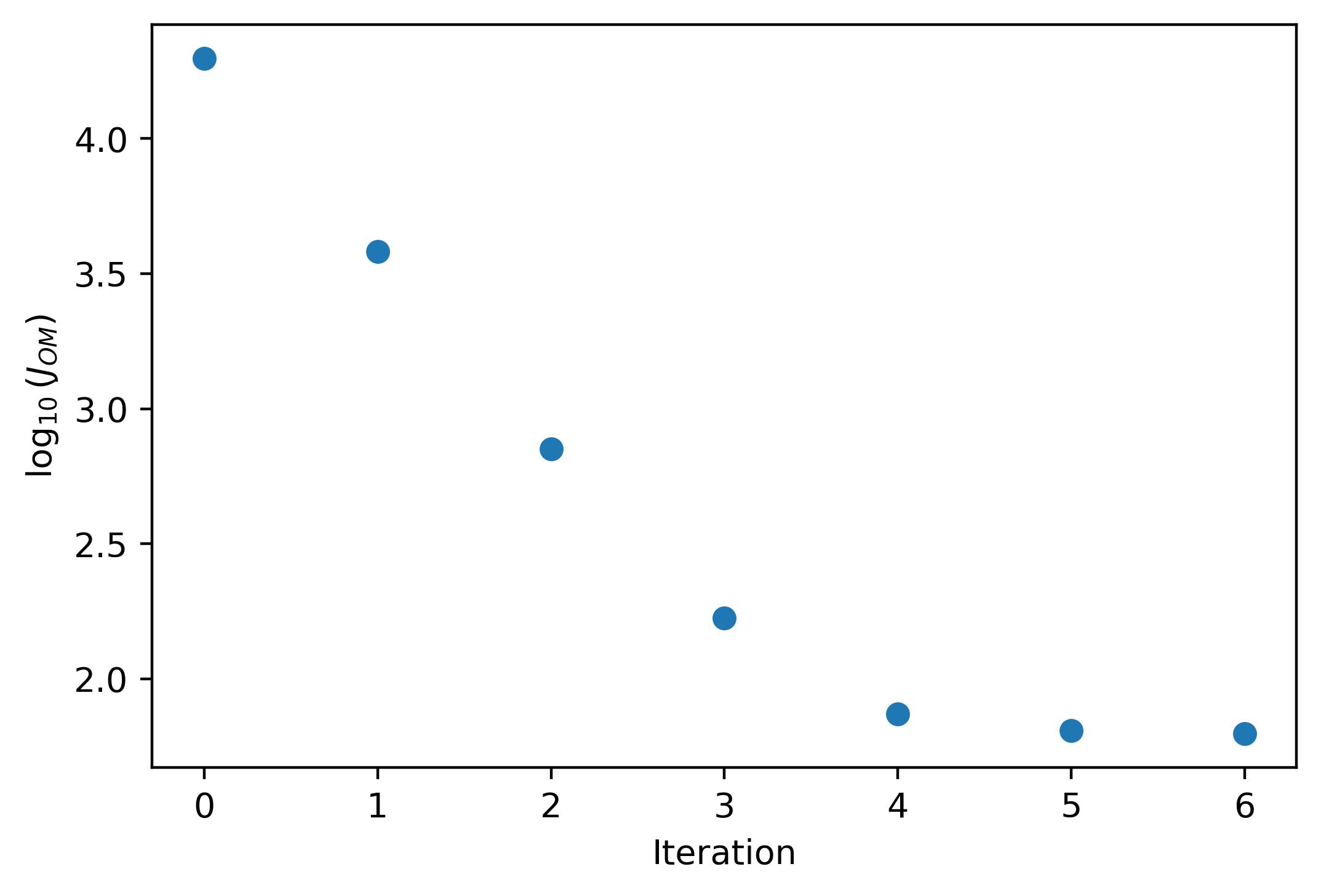}
    \caption{Onsager--Machlup cost as a function of the LM iteration number for the final-time posterior approximation.}
    \label{fig: OS_func}

    \includegraphics[clip,trim=4.5cm 1cm 2cm 0cm,width=0.95\linewidth]{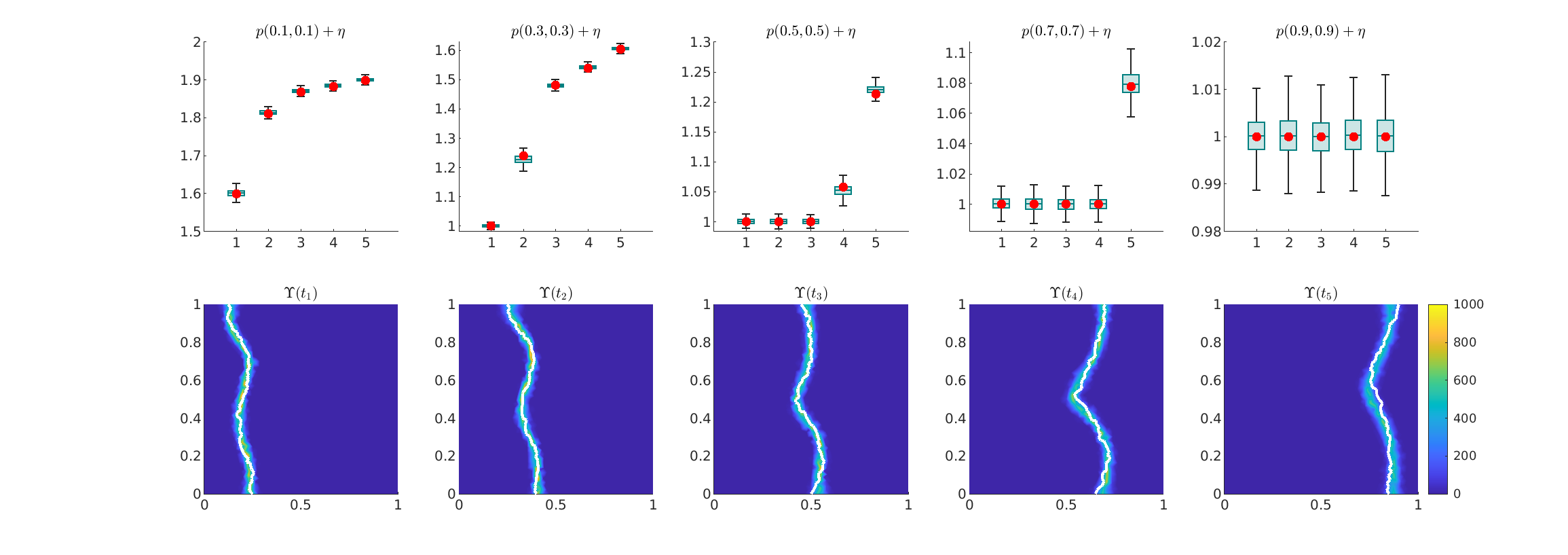}
    \caption{Top: noise-perturbed push-forward samples from the final-time LMAP approximation evaluated at selected sensors and compared against the observed data (red). Bottom: true front locations \(\{\Upsilon[u^\dagger](t_i)\}_{i=1}^5\) (white) compared with the empirical frequency of finite-element nodes classified as part of the moving boundary under posterior push-forward samples.}
    \label{fig: pushforward2D}
\end{figure}

Figure~\ref{fig: sequential_LMAP2D} shows the sequence of posterior mean
(MAP) and variance approximations together with the true
log-permeability field \(u^\dagger\). Within the saturated region at each
observation time, the posterior mean closely matches the true
permeability field, albeit with increased smoothness. This behaviour is
expected, since the MAP estimator belongs to the CM space
associated with the prior. The variance fields exhibit the expected
spatial structure: uncertainty is significantly reduced within the
saturated region and in the vicinity of the sensors, while remaining near
prior levels ahead of the advancing front where the permeability has not
yet influenced the flow dynamics.

The final-time MAP estimate was obtained after six LM iterations.
Figure~\ref{fig: OS_func} shows the corresponding decrease of the
Onsager--Machlup functional, with iteration zero corresponding to the
initial value \(J_{\mathrm{OM}}(\bar u)\). The computation times required
to obtain the successive posterior approximations corresponding to the
five observation times were
\[
(12.8,\ 20.1,\ 25.4,\ 59.7,\ 88.8)\ \mathrm{s}.
\]

To further assess the quality of the final-time Gaussian approximation,
samples were drawn from the corresponding LMAP posterior distribution and
propagated through the forward moving boundary model. The resulting
forward predictions were then perturbed with observational noise and
compared against the observed data. Simultaneously, the associated moving
boundary trajectories were recorded for each posterior sample.

Figure~\ref{fig: pushforward2D} shows that the observed data lie well
within the uncertainty intervals induced by the push-forward of the LMAP
posterior approximation. The empirical distribution of the
propagated moving fronts captures the true boundary locations
\(\{\Upsilon[u^\dagger](t_n)\}_{n=1}^5\) with good agreement. These
results indicate that the proposed Gaussian approximation provides a
meaningful local characterisation of posterior uncertainty not only in
parameter space, but also under nonlinear propagation through the moving
boundary dynamics.

As expected, increasing the number of spatial sensors leads to improved
reconstruction accuracy together with a corresponding reduction in
posterior uncertainty. The effect of sensor density on the quality of the
LMAP approximation is investigated further in
Appendix~\ref{subsec: SensorDensity}.

\subsubsection{Comparison with EKI.}

Compared with MCMC methods, which typically require
\(\mathcal O(10^5\text{--}10^6)\) forward solves, EKI has previously
demonstrated substantial computational savings in RTM inversion,
often requiring as few as \(\mathcal O(10^3)\) forward solves
\cite{Iglesias_2018,Matveev}. In this section, we demonstrate that the
proposed LMAP framework yields a further significant reduction in
computational cost.

For the 2D experiments, EKI is implemented using
Algorithm~3 of \cite{MarcoYang}. Ensemble sizes ranging from \(500\) to
\(5000\) were considered. Figure~\ref{fig: LMAP_vs_EKI} compares the
final-time EKI approximations, specifically the corresponding posterior
means and variances, with the LMAP approximation
\(
(u_{\mathrm{MAP}}^{(5)},\CC_{\mathrm{MAP}}^{(5)})
\)
shown in the final column of
Figure~\ref{fig: sequential_LMAP2D}. For small ensemble sizes, EKI
produces noticeably non-smooth mean estimates and provides a relatively
poor characterisation of uncertainty ahead of the moving front. As the
ensemble size increases, however, the EKI and LMAP approximations become
increasingly similar despite the fundamentally different nature of the
two methodologies.

We do not attempt to benchmark either method against a reference posterior,
since this would require computationally prohibitive MCMC simulations in 2D. Nevertheless,
Figure~\ref{fig: LMAP_vs_EKI} suggests that the empirical Gaussian approximations produced by
EKI become progressively closer to the LMAP approximation as the
ensemble size increases.

\begin{figure}[h]
    \centering
    \includegraphics[width=\linewidth]{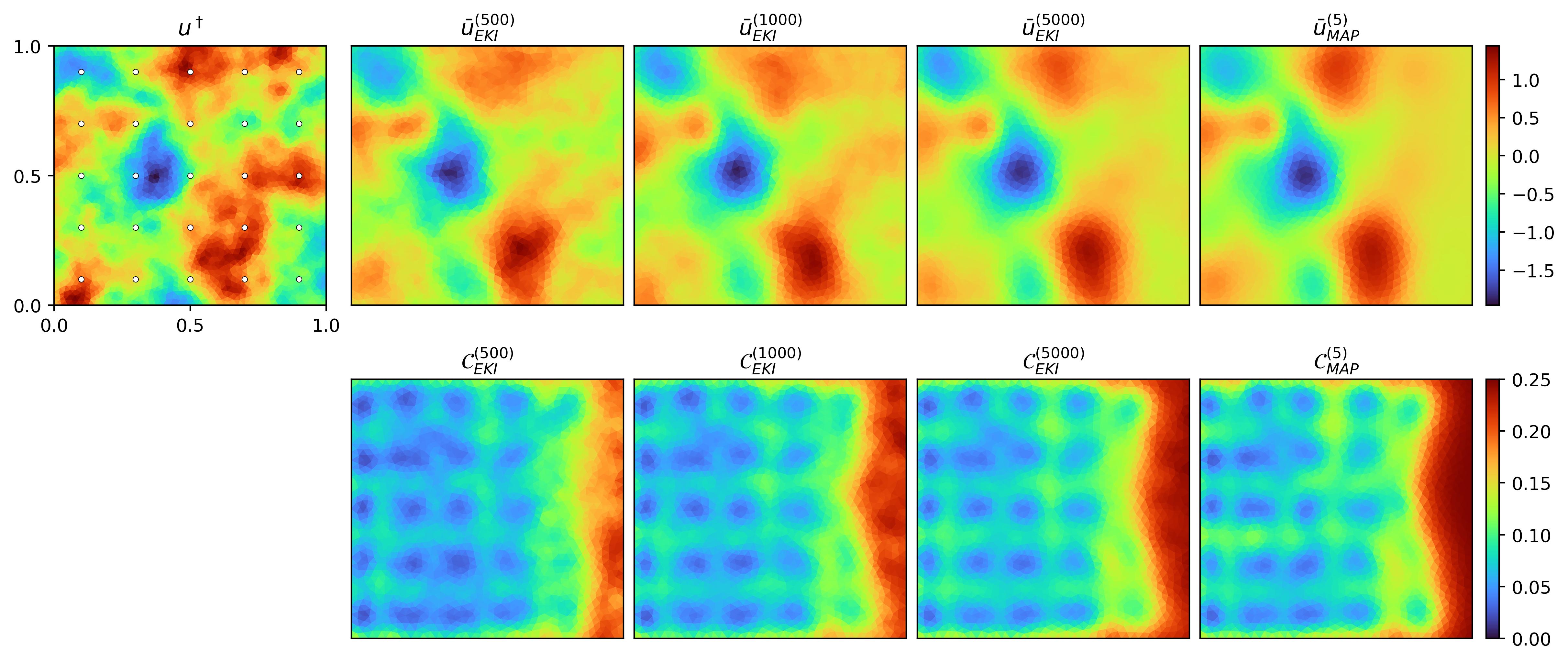}
    \caption{Comparison between the final-time EKI approximations and the LMAP approximation. The true permeability field \(u^\dagger\) is shown in the top-left panel together with the sensor configuration.}
    \label{fig: LMAP_vs_EKI}
\end{figure}

\begin{table}[t]
\centering
\caption{Comparison of computational cost for approximating \(\mu_5\).}
\begin{tabular}{|c||c|c|c|}
\hline
Algorithm
& \shortstack{Forward\\ solves}
& \shortstack{Adjoint\\ solves}
& \shortstack{Computation\\ time}
\\
\hline
LMAP
& $3.1\times10^1$
& $7.5\times10^2$
& 1.48 mins
\\
EKI (500)
& $3.0\times10^3$
& --
& 7.00 mins
\\
EKI (1000)
& $6.0\times10^3$
& --
& 14.3 mins
\\
EKI (5000)
& $3.0\times10^4$
& --
& 68.2 mins
\\
\hline
\end{tabular}
\label{tab: Comparison2D}
\end{table}

Table~\ref{tab: Comparison2D} highlights the substantial computational
advantage of the proposed LMAP framework over EKI. Even for relatively
small ensemble sizes, EKI requires at least two orders of magnitude more
forward solves than LMAP. For the largest ensemble size, which is
necessary to obtain posterior means and variances qualitatively
comparable to those produced by LMAP, the derivative-free EKI approach
requires more than one hour of computation time.

Unlike the 1D setting, the 2D LMAP framework
requires the solution of adjoint systems in order to construct the
representers. Of the 31 forward solves required by LMAP, one corresponds
to the initial evaluation of \(J_{\mathrm{OM}}(\bar u)\), while the
remaining solves arise from the parallel backtracking strategy employed
within the LM iterations. Since these backtracking evaluations are
performed simultaneously, the effective serial forward cost is
substantially smaller than the total number of forward solves reported in
Table~\ref{tab: Comparison2D}. The remaining computational cost is
primarily associated with adjoint solves, representer construction, and
the solution of the reduced linear systems. Nevertheless, even after
accounting for these additional adjoint computations, the overall runtime
remains substantially lower than that required by large-ensemble EKI.

\subsubsection{Application to complex 2D geometries.}\label{sec:complex_geometries}

We further demonstrate the proposed approach on more complex geometric
configurations: the annulus and the fork. The annulus configuration is
obtained by removing a circle of radius \(0.25\) from the interior of a
larger circle of radius \(1\), with both circles centred at the origin
(see Figure~\ref{fig: annulus}). The inner boundary serves as the inlet,
while the outer boundary acts as the outlet.

The fork configuration consists of a square region that branches into two
separate channels (see Figure~\ref{fig: fork}). The inlet
\(\partial D_I\) is located along the left edge of the domain, while the
outlet \(\partial D_0\) spans the ends of the two branches on the right.
All remaining boundaries are treated as no-flow boundaries and therefore
comprise \(\partial D_N\).

In the fork configuration, the moving front undergoes a topological
change as it splits into two disconnected components, thereby violating
the assumptions of the theoretical framework
(cf.\ Section~\ref{sec:admissible}). Nevertheless, the
numerical implementation remains robust and still produces meaningful
reconstructions and uncertainty estimates.

The experimental conditions for each configuration are summarised in
Table~\ref{tab: ExpConds}, notably differing from the previous example. Figures~\ref{fig: annulus} and
\ref{fig: fork} demonstrate the flexibility of the proposed LMAP
framework across distinct geometries and operating conditions. In
contrast to machine-learning-based surrogate approaches, which typically
require retraining when the geometry or physical setup changes, the same numerical inversion workflow can be applied without
reformulating the optimisation methodology. Moreover, whereas large-ensemble EKI approaches typically
require several hours of computation, the final-time LMAP estimates for
the annulus and fork configurations are obtained in \(248\) s and
\(136\) s, respectively.

\begin{figure}[h]
    \centering
    \includegraphics[width=\linewidth]{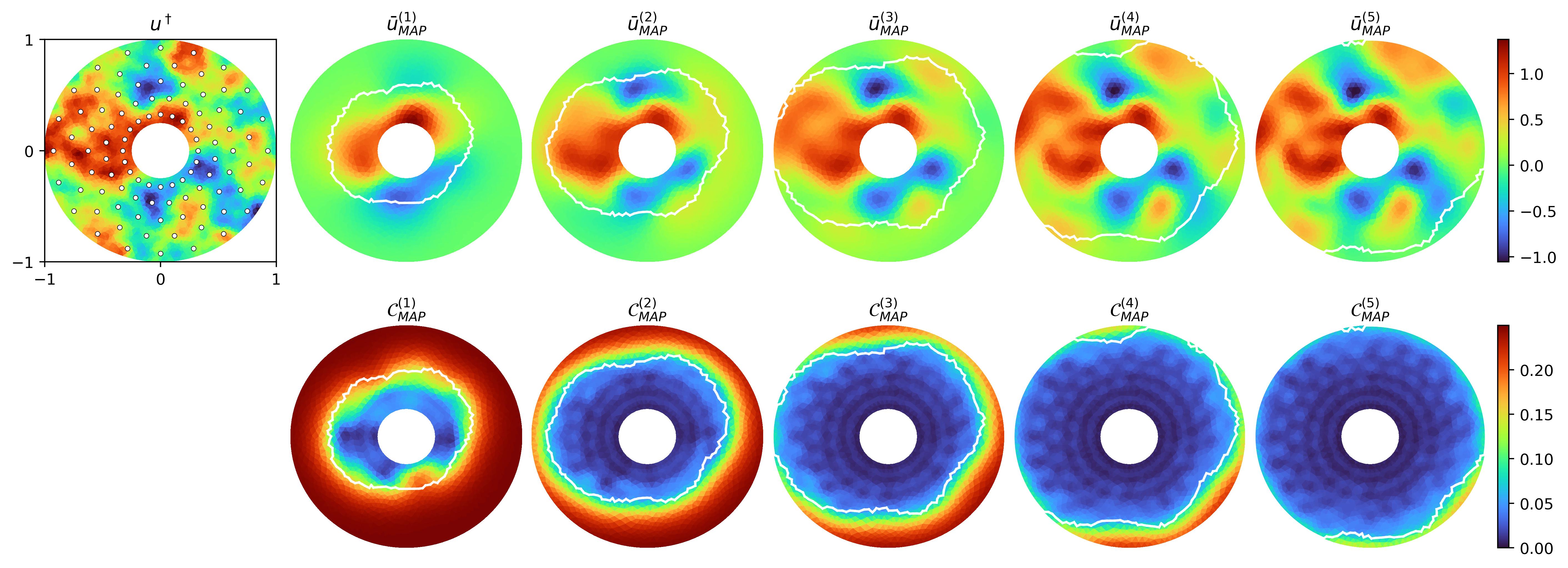}
    \caption{Experiment on annulus geometry. True log-permeability $u^\dagger$ shown in top left, along with the sensor configuration used. Top and bottom rows show the mean and variance of the LMAP estimate, respectively, at each observation time. True resin front location at each observation time $\{\Upsilon^\dagger(t_i)\}_{i=1}^5$ is overlaid in white for reference.}
    \label{fig: annulus}
    \includegraphics[width=\linewidth]{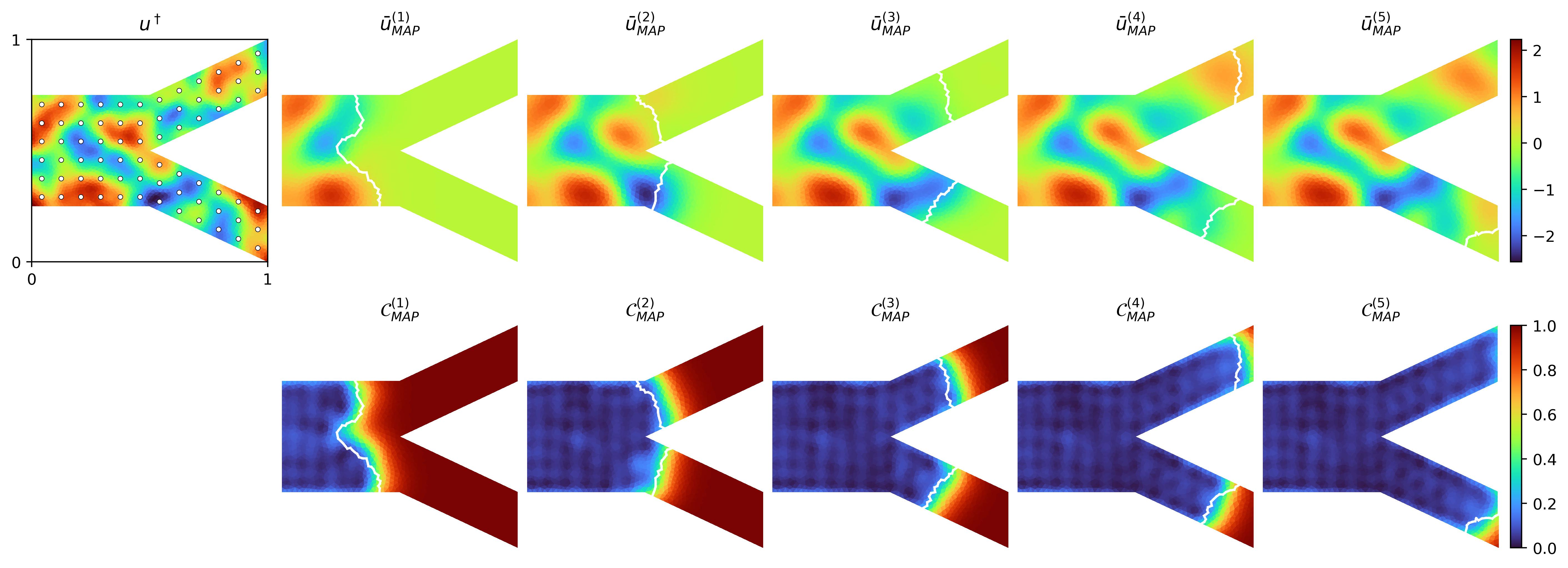}
    \caption{Experiment on the fork geometry. The true log-permeability
\(u^\dagger\) and sensor configuration are shown in the top-left panel.
The top and bottom rows show the LMAP posterior mean and variance,
respectively, at each observation time. The corresponding true moving
fronts are overlaid in white.}
    \label{fig: fork}
\end{figure}
\begin{table}
    \centering
    \caption{Setup for the fork and annulus experiments. `Elements' refers to the number of finite elements used within the forward solve for data generation (`forward') and the inversion via LMAP (`inverse').}
    \begin{tabular}{|c||c|c|c|c|}
     \hline
     Geometry & Elements (forward/inverse) & $(\mu_f,\phi,p_I,p_0)$ & $(\sigma,l,\nu)$\\
     \hline
     Annulus & $5572/3336$ & $(0.5, 0.5, 6, 1)$ & $(0.5,0.2,1.5)$\\
     Fork & $5439/3329$ & $(0.25, 0.75, 100, 0)$ & $(1,0.1,2.5)$\\
    \hline
    \end{tabular}
    \label{tab: ExpConds}
\end{table}

\section{Conclusion} \label{sec: Conclusion}

In this work, we introduced a variational framework for Bayesian
inversion in the Darcy-flow moving boundary problem describing resin
infusion in resin transfer moulding. The proposed approach combines a
very weak formulation of the coupled Darcy/interface evolution problem
with shape-calculus techniques to derive formal linearisations, adjoint
equations, and representer-based expressions for Fréchet derivatives of
the observation map. This enabled the construction of a reduced
Levenberg--Marquardt framework for MAP estimation together with local Gaussian posterior
approximations through the LMAP methodology.

The resulting framework provides an efficient alternative to
derivative-free inversion methods such as ensemble Kalman inversion and
sampling-based MCMC approaches. In 1D, where explicit
solutions are available, the proposed formulation was shown to be
consistent with the rigorous Fréchet differentiability theory of the
forward map. In 2D, numerical experiments
demonstrated that the proposed LMAP framework produces accurate
reconstructions together with meaningful uncertainty quantification while
requiring substantially fewer forward solves than large-ensemble EKI. The
results further showed that the framework remains effective even for
geometries exhibiting interface behaviours outside the assumptions of the
theoretical setting, including topological changes of the moving front.

A central aspect of the present work is that the substantial effort
required to derive and implement the linearised and adjoint moving
boundary systems from first principles ultimately leads to major
computational gains relative to derivative-free methodologies. Although
methods such as EKI are often attractive because they naturally provide
Gaussian uncertainty estimates without requiring sensitivities, the
results presented here indicate that the proposed LMAP approximation
produces posterior statistics comparable to those obtained from EKI and,
in the 1D setting, closely matches reference posteriors
computed using MCMC. At the same time, the proposed adjoint-based
framework achieves reductions of several orders of magnitude in the
number of forward solves and overall computational time. This reduction
is particularly significant in the context of RTM, where real-time or
near real-time permeability estimation is essential for online monitoring
and active process control.

We also note that the present variational framework does not preclude the use
of machine-learning methodologies. The forward solutions
and adjoint-based representers derived here provide the operator and
derivative information required for derivative-informed surrogate
modelling. One possible extension would therefore be to construct a
derivative-informed neural operator (DINO) approximation of the
parameter-to-observable map, trained jointly on evaluations of the map and
its Fr\'echet derivative
\cite{OLEARYROSEBERRY2022114199,OLEARYROSEBERRY2024112555}.
Such derivative-informed surrogates have been developed for Bayesian
optimal experimental design, PDE-constrained optimisation under
uncertainty, geometric MCMC, and amortised Bayesian inversion
\cite{doi:10.1137/23M157956X,10.1007/s10915-023-02145-1,JMLR:v26:24-0745,
JMLR:v27:25-0858}. 
In the present setting, these approaches could be used to accelerate
repeated evaluations of both the pressure observation map and its
representers within MAP estimation, posterior approximation, and
PDE-constrained control. Such a hybrid variational and
derivative-informed operator-learning framework may provide a practical
route towards real-time Bayesian estimation and control for industrial
RTM processes.

Beyond providing a convenient variational setting for the moving boundary
problem, the very weak formulation allows the geometric and PDE
perturbations to be decoupled in a manner compatible with adjoint-based
shape differentiation. In particular, the resulting structure enables the
linearised moving boundary conditions to emerge naturally from the
variational formulation after integration by parts, thereby providing a
systematic route for the derivation of adjoint representers and reduced
optimisation algorithms.

Several important analytical questions remain open. Existence of a weak solution to the coupled 2D Darcy/interface evolution problem follows from \cite{BLZ2018} (see  also references therein), under uniform ellipticity and boundedness of the permeability. A corresponding uniqueness result for spatially variable permeability on a general domain has not, to our knowledge, been established. In its absence, a rigorous well-posedness and differentiability theory for the coupled problem remains open. The adjoint and linearised systems derived here were obtained formally within the proposed variational framework, and placing them on a rigorous footing depends on such a theory. Developing it, together with convergence analysis of the numerical approximations, forms an important direction for future research.

From a computational perspective, several further extensions are also of
interest. In particular, the proposed framework naturally admits
parallelisation of the adjoint computations and could be combined with
adaptive sensor placement, online inversion strategies, or active-control
methodologies for RTM processes. Within a Bayesian framework, this also
connects naturally with optimal experimental design criteria, such as
A-optimal design, which seeks to minimise the trace of the posterior
covariance and thereby reduce parameter uncertainty
\cite{wright2023bayesian}. A natural next step is the extension of the
framework to fully three-dimensional RTM simulations, where the
computational advantages of the reduced adjoint-based methodology become
even more significant. 

Overall, the results indicate that variational and adjoint-based
approaches provide a computationally scalable framework for
Bayesian inversion in Darcy-flow moving boundary problems, particularly
in time-sensitive applications (e.g. active control) where repeated high-dimensional inference
is required.

\section*{Acknowledgments}
The authors thank Kristoffer G. van der Zee for stimulating discussions and for sharing his expertise on shape calculus.
This work was supported by the Engineering and Physical Sciences Research Council [grant number EP/P006701/1], through the EPSRC Future Composites Manufacturing Research Hub. We are also grateful for access to the University of Nottingham's Ada HPC service, where some of the simulations were performed. For the purpose of open access, the authors applied a Creative Commons Attribution (CC-BY) license to any Author Accepted Manuscript version arising.

\section*{Data availability}

No external datasets were used in this study. All numerical results are
based on synthetic data generated using the accompanying code. The code
and scripts required to reproduce the numerical experiments, figures and
tables are openly available at \url{https://github.com/ChaffyHurdle/RTM-flow/tree/lmap}. 

\appendix

\renewcommand{\thesection}{A}

\section{Proof of Theorem~\ref{thm:LM_LMAP_clean}} \label{app: lm_descent}
\begin{proof} Part (i): Let
\[
\Delta_k := y - \mathcal G(u_k) \in \mathbb R^M,
\qquad
D\mathcal G_k := D\mathcal G(u_k),
\qquad
g_k := \frac{\bar u - u_k}{1+\alpha_k} \in E.
\]
We write the LM increment as \(h = g_k + z\) for $z \in E$ and note that
\[
y - \mathcal G(u_k) - D\mathcal G(u_k) h
=(\Delta_k - D\mathcal G_k g_k) - D\mathcal G_kz.
\]
A direct computation shows that
\begin{align}
\frac12\|u_k + h - \bar u\|_E^2
+
\frac{\alpha_k}{2}\|h\|_E^2
=
\frac{1+\alpha_k}{2}\|z\|_E^2
+
\frac12\frac{\alpha_k}{1+\alpha_k}\|\bar u - u_k\|_E^2,
\end{align}
where the second term is independent of \(z\). Thus, minimising \( J_k(h)\) over \(h \in E\) is equivalent,
up to an additive constant, to minimising over \(z \in E\) the functional
\begin{equation}
J(z)
=
\frac12
\|(\Delta_k - D\mathcal G_k g_k) - D\mathcal G_k z\|_\Sigma^2
+
\frac{1+\alpha_k}{2}\|z\|_E^2.
\label{eq:Jz_clean}
\end{equation}
Define the CM representers
\begin{equation}
R_i[u_k] := \mathcal C_0 r_i[u_k] \in E.
\label{eq:E_rep}
\end{equation}
and notice that the characterisation of $D\GG_{k}$ given in \eqref{eq:DG_rep} can be written as
\begin{equation}
D\GG_{k} h=\Big[\langle R_1[u_k], h \rangle_{E},\dots, \langle R_M[u_k], h \rangle_{E}\Big]
\label{eq:DG_rep2}
\end{equation}
while 
\begin{equation}
[\mathcal R(u_k)]_{ij}=
\langle r_i[u_k], \mathcal C_0r_j[u_k] \rangle_{L^{2}(D)}=\langle R_i[u_k], R_j[u_k] \rangle_{E}.
\label{eq:mat_rep2}
\end{equation}
Let us now define
\[
S := \operatorname{span}\{R_1[u_k], \ldots, R_M[u_k]\} \subset E,
\qquad
S^\perp := \{ b \in E \mid \langle R_i[u_k], b \rangle_E = 0 \text{ for all } i=1,\ldots,M\}.
\]
and consider the orthogonal decomposition
\(
E = S \oplus S^\perp.
\)
Thus, any \(z \in E\) can be written uniquely as
\[
z = s + b,
\qquad s \in S,\; b \in S^\perp.
\]
Let us write \(s = \sum_{i=1}^{M}\beta_{i}R_{i}[u_k]\) for some \(\beta=(\beta_{1},\dots,\beta_{M}) \in \mathbb R^M\) (not necessarily unique). Hence from \eqref{eq:DG_rep2} and \eqref{eq:mat_rep2} we have
\[
D\mathcal G_k b = 0,
\qquad
D\GG_{k} s = \mathcal R(u_k)\beta.
\]
Moreover,
\[
\|z\|_E^2 = \|s\|_E^2 + \|b\|_E^2
= \beta^T \mathcal R(u_k)\beta + \|b\|_E^2.
\]
Substituting the above expressions into \eqref{eq:Jz_clean} yields
\begin{align}
J(\beta,b)
=
\frac12
\|(\Delta_k - D\GG_{k} g_k) - \mathcal R(u_k)\beta\|_\Sigma^2
+
\frac{1+\alpha_k}{2}\beta^T \mathcal R(u_k)\beta
+
\frac{1+\alpha_k}{2}\|b\|_E^2. \label{eq:apA1}
\end{align}
The minimisation in \(b\) is explicit: since the last term is strictly convex, the unique minimiser is $b_* = 0$. Thus the minimiser lies in \(S\), and we reduce to minimising over \(\beta\).

Differentiating \eqref{eq:apA1} with respect to \(\beta\) and setting the result equal to zero gives
\begin{equation}
-\mathcal R(u_k)\Sigma^{-1}
\big(\Delta_k - D\GG_{k} g_k - \mathcal R(u_k)\beta\big)
+
(1+\alpha_k)\mathcal R(u_k)\beta
= 0.
\label{eq:beta_opt_clean}
\end{equation}

Define
\begin{equation}
a_k :=
\frac{1}{1+\alpha_k}
\Sigma^{-1}
\big(\Delta_k - D\GG_{k} g_k - \mathcal R(u_k)\beta\big).
\end{equation}
Then \eqref{eq:beta_opt_clean} implies
\begin{equation}
\mathcal R(u_k)\beta = \mathcal R(u_k)a_k.
\label{eq:apA10}
\end{equation}
We now show that \(\beta\) and \(a_k\) define the same element in \(E\). Notice that
\[
\|\mathcal C_0D\GG_{k}^\ast (\beta - a_k)\|_E^2=(\beta-a_{k})^{T}D\GG_{k} \mathcal C_{0}D\GG_k^\ast(\beta-a_{k})=(\beta-a_{k})^{T}\mathcal R(u_k) (\beta-a_{k}) 
\]
and thus, from \eqref{eq:apA10}, $\mathcal C_0D\GG(u_{k})^\ast (\beta - a_k) = 0$, which using \eqref{eq:E_rep} and \eqref{eq:DG_rep_star} can be written as
\[
\mathcal C_0D\GG_{k}^\ast (\beta - a_k)
=
\sum_{i=1}^M (\beta_i - a_{k,i}) R_i[u_k] = 0,
\]
and arrive at
\[
\sum_{i=1}^M \beta_i R_i[u_k]
=
\sum_{i=1}^M a_{k,i} R_i[u_k].
\]
Thus, all minimising coefficient vectors correspond to the same element of
\(E\), and the unique minimiser is given by
\[
z_*  = \sum_{i=1}^M a_{k,i} R_i[u_k].
\]
Furthermore,
$$
D\GG_{k} z_* = \mathcal R(u_k)a_{k}= \mathcal R(u_k)\beta
$$
so
\begin{equation}
a_k :=
\frac{1}{1+\alpha_k}
\Sigma^{-1}
\big(\Delta_k - D\GG_{k} g_k - D\GG_{k} z_*\big).
\end{equation}
Using the previous two expressions combined with the change of variables \(h = g_k + z\) yields expressions \eqref{eq:MAP_update}-\eqref{eq:MAP_update2}.

\medskip

Part (ii). From \eqref{eq:DG_rep} we have that for any \(v\in L^2(D)\)
\begin{equation}
D\GG(u_{\mathrm{MAP}}) \mathcal C_{0}v=\Big[\langle r_1[u_{\mathrm{MAP}}], \mathcal C_{0} v \rangle_{L^{2}(D)},\dots, \langle r_M[u_{\mathrm{MAP}}], \mathcal C_{0} v \rangle_{L^{2}(D)}\Big]
\label{eq:DG_rep4}
\end{equation}
while 
\begin{equation}\label{eq:DG_rep5}
D\GG(u_{\mathrm{MAP}})\mathcal C_0 D\GG(u_{\mathrm{MAP}})^\ast=\mathcal R(u_{\mathrm{MAP}}),
\end{equation}
and 
\begin{equation}
\mathcal{C}_{0}D\GG(u_{\mathrm{MAP}})^{\ast} w = \sum_{i=1}^M w_i \mathcal{C}_{0}r_i[u_{\mathrm{MAP}}].
\label{eq:DG_rep6}
\end{equation}
for all $w\in \mathbb{R}^{M}$. Substituting \eqref{eq:DG_rep4}-\eqref{eq:DG_rep6} into \eqref{eq:Cov} completes the proof. 
\end{proof}

\renewcommand{\thesection}{B}

\section{Proof of Theorem~\ref{the:G_differentiable}} \label{app:G_differentiable}

\begin{lemma}[Local Lipschitz continuity of the extended pressure]
\label{lem:pressure_lipschitz}
Let \(D=(0,x^*)\), and let \(p[u]\) denote the extended pressure
defined by \eqref{eq: Analytical1D_end}. For every \(R>0\), there exists a
constant \(C_R>0\) such that
\[
\|p[u]-p[v]\|_{L^\infty(D_T)}
\leq
C_R\|u-v\|_X
\]
for all \(u,v\in X=C(\overline{D})\) satisfying
\[
\|u\|_X\leq R,
\qquad
\|v\|_X\leq R.
\]
\end{lemma}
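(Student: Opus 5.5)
The plan is to rewrite the extended pressure \eqref{eq: Analytical1D_end} in one formula and then bound everything in a logarithmic, scale-invariant way. Logarithms avoid the degeneracy of the ratio $F[u](x)/F[u](\Upsilon[u](t))$ as $t\to0$, where both numerator and denominator vanish. Write $\delta:=\|u-v\|_X$, $c:=(p_I-p_0)/(\mu_f\phi)$, and $F_u:=F[u]$, $W_u:=W[u]$, $\Upsilon_u:=\Upsilon[u]$ (similarly for $v$). Since $F_u$ is increasing, $p[u]=p_0$ for $x\ge \Upsilon_u(t)$, and $p[u]$ is continuous across the front, we have for all $(x,t)\in D_T$
\[
p[u](x,t)=p_I-(p_I-p_0)\exp\bigl(\min\{a_u(x,t),0\}\bigr),
\qquad a_u(x,t):=\log F_u(x)-\log F_u(\Upsilon_u(t)).
\]
The map $a\mapsto e^{\min\{a,0\}}$ is $1$-Lipschitz on $\mathbb R$. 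Hence it suffices to show
\[
|a_u(x,t)-a_v(x,t)|\le C'_R\,\delta \quad\text{uniformly in }(x,t),
\]
and then take $C_R=(p_I-p_0)C'_R$. The point $x=0$ is handled by continuity (or is a null set).

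\textbf{Step 1 (relative bounds on $F$ and $W$).} Pointwise $e^{-u}\le e^{\delta}e^{-v}$ and conversely. Integrating this gives $e^{-\delta}F_v\le F_u\le e^{\delta}F_v$, so $|\log F_u(x)-\log F_v(x)|\le\delta$ for every $x>0$. Integrating once more gives the same bound for $|\log W_u-\log W_v|$.

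\textbf{Step 2 (log-Lipschitz dependence of the front).} This is the main step. For $\|v\|_X\le R$ we have $e^{-R}\le e^{-v}\le e^{R}$, so $e^{-R}y\le F_v(y)\le e^{R}y$. This yields two elementary derivative bounds:
\[
\frac{d}{dy}\log F_v(y)=\frac{e^{-v(y)}}{F_v(y)}\le \frac{e^{2R}}{y},
\qquad
\frac{d}{dy}\log W_v(y)=\frac{F_v(y)}{W_v(y)}\ge\frac1y .
\]
The second bound uses $W_v(y)\le yF_v(y)$, which holds by monotonicity of $F_v$. Combining them shows that $\log F_v$ is $e^{2R}$-Lipschitz as a function of $\log W_v$ along $y$:
\[
|\log F_v(y_1)-\log F_v(y_2)|\le e^{2R}|\log y_1-\log y_2|\le e^{2R}|\log W_v(y_1)-\log W_v(y_2)|.
\]

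Now fix $t<\min\{\tau^*[u],\tau^*[v]\}$. Then $W_u(\Upsilon_u(t))=W_v(\Upsilon_v(t))=ct$, so by Step 1
\[
|\log W_v(\Upsilon_u(t))-\log W_v(\Upsilon_v(t))|=|\log W_v(\Upsilon_u(t))-\log W_u(\Upsilon_u(t))|\le\delta .
\]
Therefore $|\log F_v(\Upsilon_u(t))-\log F_v(\Upsilon_v(t))|\le e^{2R}\delta$.

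The filling regime is handled by writing $\Upsilon_u(t)=\min\{W_u^{-1}(ct),x^*\}$, or equivalently $\log W_v(\Upsilon)=\min\{\log(ct)+\text{shift},\log W_v(x^*)\}$. Truncation by a min is $1$-Lipschitz, so the same estimate holds for all $t\in(0,T]$. The case where only one of the two fields has filled is covered by the same min-truncation argument. Adding Step 1 at the point $y=\Upsilon_u(t)$ gives
\[
|\log F_u(\Upsilon_u(t))-\log F_v(\Upsilon_v(t))|\le(1+e^{2R})\delta .
\]

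\textbf{Step 3 (conclusion).} By the triangle inequality, $|a_u-a_v|\le\delta+(1+e^{2R})\delta$. Hence the lemma holds with $C_R=(p_I-p_0)(2+e^{2R})$.

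The obstacle I anticipate is Step 2 near $t=0$ and near the filling time. A naive absolute Lipschitz estimate on $\Upsilon$ followed by a bound on $1/F(\Upsilon)$ blows up as $\Upsilon\to0$, which is precisely why the argument works with logarithms throughout. I would also check carefully that the min-truncation argument is correct when $\tau^*[u]\ne\tau^*[v]$.
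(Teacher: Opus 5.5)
Your proof is correct, and it takes a genuinely different route from the paper's.

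\textbf{The paper's route.} It works with absolute bounds: $m_Rx\le F[u](x)\le M_Rx$, $|F[u](x)-F[v](x)|\le M_Rx\delta$, and the analogous bounds for $W$. From these it derives the relative front estimate $|r-s|/\max\{r,s\}\le C_R\delta$, by cases on whether the fronts have reached $x^*$. It then bounds the normalised pressure difference separately on the three regions $x\le s$, $s<x\le r$ and $x>r$.

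\textbf{Your route.} You pass to logarithmic variables instead.
\begin{itemize}
\item The multiplicative bounds $|\log F_u-\log F_v|\le\delta$ and $|\log W_u-\log W_v|\le\delta$ hold with no dependence on $R$.
\item The representation $p[u]=p_I-(p_I-p_0)\exp(\min\{a_u,0\})$, together with the $1$-Lipschitz property of $a\mapsto e^{\min\{a,0\}}$, removes the spatial case split entirely.
\item $R$ enters only once, through $d\log F_v/d\log y\le e^{2R}$ and $d\log W_v/d\log y\ge1$.
\end{itemize}

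\textbf{Comparison.} Both arguments rest on the same observation: the front must be controlled in scale-invariant form to avoid blow-up as $t\to0$. The paper's relative bound $|r-s|/\max\{r,s\}$ is essentially a linearised version of your logarithmic bound. Your version gives the explicit constant $(p_I-p_0)(2+e^{2R})$. It also uses the a priori bound for only one of the two fields, namely $v$ in Step~2. The paper's version stays with absolute differences and region-by-region estimates, and leaves its constants implicit.

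\textbf{The mixed filling case you flagged.} It does close, and you can state it directly rather than through the min-truncation phrasing.
\begin{itemize}
\item If $t<\tau^*[u]$ and $t\ge\tau^*[v]$, then $W_v(x^*)\le ct=W_u(\Upsilon_u(t))$. Hence $0\le\log W_v(x^*)-\log W_v(\Upsilon_u(t))\le\log W_u(\Upsilon_u(t))-\log W_v(\Upsilon_u(t))\le\delta$.
\item If $t\ge\tau^*[u]$ and $t<\tau^*[v]$, then $W_v(\Upsilon_v(t))=ct\ge W_u(x^*)$. Hence $0\le\log W_v(x^*)-\log W_v(\Upsilon_v(t))\le\log W_v(x^*)-\log W_u(x^*)\le\delta$.
\end{itemize}
In both cases $|\log W_v(\Upsilon_u(t))-\log W_v(\Upsilon_v(t))|\le\delta$, as your Step~2 requires.
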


\begin{proof}
Set
\[
\delta:=\|u-v\|_X,
\qquad
m_R:=e^{-R},
\qquad
M_R:=e^R.
\]
For \(u,v\) in the closed ball of radius \(R\) in \(X\), the mean
value theorem gives
\[
|e^{-u(x)}-e^{-v(x)}|
\leq
M_R\delta.
\]
Consequently, for every \(x\in[0,x^*]\),
\begin{align}
m_Rx
&\leq F[u](x)\leq M_Rx,
&
|F[u](x)-F[v](x)|
&\leq M_Rx\delta,
\label{eq:F_bounds}
\\
\frac{m_R}{2}x^2
&\leq W[u](x)\leq \frac{M_R}{2}x^2,
&
|W[u](x)-W[v](x)|
&\leq \frac{M_R}{2}x^2\delta.
\label{eq:W_bounds}
\end{align}

Fix \(t>0\), and write
\[
s:=\Upsilon[u](t),
\qquad
r:=\Upsilon[v](t).
\]
Without loss of generality, suppose that \(s\leq r\). If both fronts
are strictly smaller than \(x^*\), then
\[
W[u](s)=W[v](r)
=
\frac{p_I-p_0}{\mu_f\phi}t.
\]
Using \eqref{eq:W_bounds}, we obtain
\begin{align*}
\frac{m_R}{2}(r^2-s^2)\leq
W[v](r)-W[v](s)=
W[u](s)-W[v](s)\leq\frac{M_R}{2}s^2\delta.
\end{align*}
It follows that
\[
\frac{r-s}{r}
\leq
\frac{M_R}{m_R}\delta.
\]

If \(r=x^*\) while \(s<x^*\), then the \(v\)-front has already
reached the outlet, and
\[
W[v](x^*)
\leq
\frac{p_I-p_0}{\mu_f\phi}t
=
W[u](s)
<
W[u](x^*).
\]
Therefore,
\begin{align*}
\frac{m_R}{2}\bigl((x^*)^2-s^2\bigr)\leq
W[u](x^*)-W[u](s)\leq
W[u](x^*)-W[v](x^*)\leq
\frac{M_R}{2}(x^*)^2\delta,
\end{align*}
and hence the same type of estimate holds. Thus, in all cases,
\begin{equation}
\frac{|r-s|}{\max\{r,s\}}
\leq
C_R\delta.
\label{eq:front_lipschitz_relative}
\end{equation}

Let $\Delta p:=p_I-p_0$ and define the normalised extended pressure
\[
\pi_u(x,t)
:=
\frac{p[u](x,t)-p_0}{\Delta p}.
\]
For \(t>0\),
\[
\pi_u(x,t)
=
\begin{cases}
1-\displaystyle\frac{F[u](x)}{F[u](s)},
&0\leq x\leq s,\\[2mm]
0,
&s<x\leq x^*.
\end{cases}
\]

Again suppose that \(s\leq r\). For \(0\leq x\leq s\), we have
\begin{align*}
|\pi_u(x,t)-\pi_v(x,t)|\leq
\frac{|F[u](x)-F[v](x)|}{F[u](s)}
+
\frac{
F[v](x)\,
|F[v](r)-F[u](s)|
}{
F[u](s)F[v](r)
}.
\end{align*}
The first term is bounded by
\(
\frac{M_R}{m_R}\delta
\). Moreover,
\[
|F[v](r)-F[u](s)|
\leq
M_R(r-s)+M_Rs\delta.
\]
Using \eqref{eq:F_bounds},
\eqref{eq:front_lipschitz_relative}, and \(x\leq s\leq r\), it follows
that
\[
|\pi_u(x,t)-\pi_v(x,t)|
\leq
C_R\delta.
\]

For \(s<x\leq r\), we have \(\pi_u(x,t)=0\), while
\begin{align*}
0\leq\pi_v(x,t)=
1-\frac{F[v](x)}{F[v](r)}\leq
\frac{F[v](r)-F[v](s)}{F[v](r)}\leq
\frac{M_R}{m_R}\frac{r-s}{r}
\leq
C_R\delta.
\end{align*}
For \(x>r\), both extended pressures are equal to \(p_0\).
Consequently,
\[
\sup_{x\in D}
|\pi_u(x,t)-\pi_v(x,t)|
\leq
C_R\delta
\]
with a constant independent of \(t>0\). Multiplying by \(\Delta p\)
and taking the essential supremum over \(t\in(0,T)\) proves the result.
\end{proof}

\begin{proof}[Proof of Theorem~\ref{the:G_differentiable}]
Under the settings of Section \ref{sec:1D}, it is shown in \cite[Section 3.3]{MCthesis} that for a fixed $(x,t)\in D_{T}$, the mapping $u\to p[u](x,t)$ is Fréchet differentiable for all 
\begin{equation}\label{eq:appB0}
    u\in E_{\neq} := \{u \in E\, |\, \tau^*[u]\neq t ,\, \Upsilon[u](t) \neq x\}.
\end{equation}
Furthermore, it was derived that 
    \begin{align}\label{eq:appB_1}
        Dp[u;x,t]h = \langle r_p[u;x,t],h\rangle_{L^{2}(D)},\quad \forall h\in E, 
    \end{align}
where $r_p[u;x,t]\in L^{2}(D)$ is given by
\begin{align}
r_p[u;x,t](z)
&=
\mathbb{I}_{\{x<\Upsilon(t)\}}
\frac{p_I-p_0}{F[u](\Upsilon(t))^2}
\Bigg[
F[u](x)r_F[u;\Upsilon(t)](z)
-
F[u](\Upsilon(t))r_F[u;x](z)
\nonumber\\
&\qquad
-
\mathbb{I}_{\{t<\tau^*\}}
\frac{F[u](x)}{F[u](\Upsilon(t))}
e^{-u(\Upsilon(t))}
r_W[u;\Upsilon(t)](z)
\Bigg],
\label{eq: full_frechet_pressure}
\end{align}
and
\begin{align}
r_F[u;x](z)
:=
-\mathbb{I}_{\{z<x\}}e^{-u(z)},
 \,\,
r_W[u;x](z)
:=
-e^{-u(z)}\max\{x-z,0\}.
\label{eq: R_W}
\end{align}

In terms of the components of the forward map
\eqref{eq:observation_functionals}, define the linear operator
\begin{equation}
L_i[u]h
:=
\int_0^T\int_0^{x^*}
\mathcal H_i(x,t)Dp[u;x,t]h\,dx\,dt, 
\label{eq:appB_candidate_derivative}
\end{equation}
where $Dp[u;x,t]h$ is defined for almost every $(x,t)\in D_T$ as shown below together with
 \(L_i[u]=D\mathcal G_i(u)\).

Let us fix $u\in E$. From the pointwise differentiability result stated earlier, for fixed
$(x,t)\in D_{T}$ the map $u\mapsto p[u](x,t)$ is Fréchet differentiable at $u$
provided  $u\in E_{\neq}$, that is, $x\neq \Upsilon[u](t)$ and $t\neq \tau^*[u]$. For this fixed $u$, define
\[
B_u
:=
\{(x,t)\in D_T:x=\Upsilon[u](t)\}
\cup
\{(x,t)\in D_T:t=\tau^*[u]\}.
\]
The first set is the graph of the moving boundary and the second is a
horizontal line in space--time. Hence \(|B_u|=0\). It follows from the
pointwise differentiability result above that, for almost every
\((x,t)\in D_T\),
\[
\frac{
p[u+h](x,t)-p[u](x,t)-Dp[u;x,t]h
}{\|h\|_E}
\longrightarrow0
\qquad\text{as }\|h\|_E\longrightarrow0.
\]

By Lemma~\ref{lem:pressure_lipschitz} and the continuous embedding
\(E\hookrightarrow X\), there exist a neighbourhood of \(u\) in \(E\)
and a constant \(C_u>0\) such that
\[
\|p[u+h]-p[u]\|_{L^\infty(D_T)}
\leq
C_u\|h\|_E.
\]
Moreover, at every point at which the pointwise derivative exists, the
same local Lipschitz estimate implies
\[
|Dp[u;x,t]h|
\leq C_u\|h\|_E.
\]
Consequently,
\[
\frac{
\left|
p[u+h](x,t)-p[u](x,t)-Dp[u;x,t]h
\right|
}{\|h\|_E}
\leq 2C_u
\]
for almost every \((x,t)\in D_T\) and all sufficiently small
\(\|h\|_E\).

Since \(\mathcal H_i\in L^\infty(D_T)\) and \(D_T\) has finite measure,
the dominated convergence theorem yields
\begin{align*}
&\frac{
\left|
\mathcal G_i(u+h)-\mathcal G_i(u)-L_i[u]h
\right|
}{\|h\|_E}
\\
&\quad\leq
\int_0^T\int_0^{x^*}
|\mathcal H_i(x,t)|
\frac{
\left|
p[u+h](x,t)-p[u](x,t)-Dp[u;x,t]h
\right|
}{\|h\|_E}
\,dx\,dt
\longrightarrow0.
\end{align*}
Thus, \(\mathcal G_i\) is Fr\'echet differentiable at \(u\) and
\[
D\mathcal G_i(u)h=L_i[u]h.
\]

Using the \(L^2(D)\)-representer of the pressure derivative from
\eqref{eq:appB_1} and applying Fubini's theorem, we obtain
\begin{align*}
D\mathcal G_i(u)h
&=
\int_0^T\int_0^{x^*}
\mathcal H_i(x,t)
\int_0^{x^*}r_p[u;x,t](z)h(z)\,dz\,dx\,dt
\\
&=
\int_0^{x^*}
\left[
\int_0^T\int_0^{x^*}
\mathcal H_i(x,t)r_p[u;x,t](z)\,dx\,dt
\right]h(z)\,dz.
\end{align*}
Therefore,
\[
D\mathcal G_i(u)h
=
\langle r_i[u],h\rangle_{L^2(D)},
\]
where
\[
r_i[u](z)
=
\int_0^T\int_0^{x^*}
\mathcal H_i(x,t)r_p[u;x,t](z)\,dx\,dt.
\]
Substituting \eqref{eq: full_frechet_pressure} and
\eqref{eq: R_W} into this expression and simplifying gives
\eqref{eq:ri_simplified}.
\end{proof}

\renewcommand{\thesection}{C}
\section{Proof of Proposition~\ref{prop:LinearisedPhysics}} \label{app:LinearisedPhysics}
\begin{proof}
Let
\(\mathscr R(u,p,V,\Omega;\lambda,\kappa)\)
denote the left-hand side of \eqref{eq:weak1}. Throughout this proof, derivatives with respect to the domain are
understood as one-sided Eulerian directional derivatives at
\(s=0^+\), generated by the perturbation flow introduced in
Section~\ref{sec:geometric_perturbations}. For a shape-dependent
functional \(\mathscr F\), we write 
\[
d_\Omega^+\mathscr F(\Omega)[\deltao]
:=
\left.
\frac{d}{ds}
\mathscr F(\Omega_s)
\right|_{s=0^+},
\qquad
\Omega_s(t)=T_{s\deltao}(\Omega(t)).
\]
All identities involving time derivatives of the interface
parameterisation are understood for almost every \(t\in(0,T)\).

For fixed
\((u,V,\Omega;\lambda,\kappa)\), the dependence of
\(\mathscr R\) on \(p\) is linear and admits a continuous extension from
\(\mathcal P\) to the ambient space
\(\mathcal P_{\mathrm{amb}}\). Hence, for
\(\deltap\in\mathcal P_0(\Omega_T)\),
\begin{align*}
D_p\mathscr R(u,p,V,\Omega;\lambda,\kappa)[\deltap]
&=
-\int_0^T\int_{\Omega(t)}
\deltap\,\nabla\cdot(e^u\nabla\lambda)
\,dx\,dt
\\
&\quad
+
\int_0^T\int_{D\setminus\overline{\Omega(t)}}
\deltap\,\lambda
\,dx\,dt
\\
&=
-\int_0^T\int_{\Omega(t)}
\deltap\,\nabla\cdot(e^u\nabla\lambda)
\,dx\,dt,
\end{align*}
where the final equality follows from the zero exterior extension encoded
in \(\mathcal P_0(\Omega_T)\).

The variations with respect to \(u\) and \(V\) are obtained by direct
G\^ateaux differentiation. We therefore focus on the shape variation
with respect to the moving space--time domain. We split the residual as
\[
\mathscr R=\mathscr W+\mathscr M,
\]
where
\begin{align*}
\mathscr W(u,p,\Omega;\lambda)
&:=
-\int_0^T\int_{\Omega(t)}
p\,\nabla\cdot(e^u\nabla\lambda)\,dx\,dt
+
\int_0^T\int_{\partial\Omega(t)}
p_B e^u\nabla\lambda\cdot n\,ds\,dt
+\\
&\int_0^T\int_{D\setminus\overline{\Omega}(t)}
(p-p_0)\lambda\,dx\,dt,
\\
\mathscr M(V,\Omega;\lambda,\kappa)
&:=
\mu_f\phi
\int_0^T\int_{\Upsilon(t)}
\lambda V\,ds\,dt
+
\int_0^T\int_{\Upsilon(t)}
(W-V)\kappa\,ds\,dt .
\end{align*}


Integrating the boundary term in \(\mathscr W\) by parts yields
\begin{align*}
\mathscr W(u,p,\Omega;\lambda)
=
&-
\int_0^T\int_{\Omega(t)}
(p-p_B)\,
\nabla\cdot(e^u\nabla\lambda)
\,dx\,dt
\\
&+
\int_0^T\int_{\Omega(t)}
e^u\nabla p_B\cdot\nabla\lambda
\,dx\,dt +
\int_0^T\int_{D\setminus\overline{\Omega}(t)}
(p-p_0)\lambda
\,dx\,dt .
\end{align*}

Since \(\lambda=0\) on
\(\partial D_I\cup\partial D_0\),
\(\deltao\cdot n_D=0\) on \(\partial D_N\), and
\(p=p_0\) on \(\Upsilon(t)\), the shape derivative of the final term
vanishes by Proposition~3.2 of \cite{van2010goal}.

Moreover, by construction, \(p_B\) is chosen constant along normal
directions in a tubular neighbourhood of
\(\Upsilon(t)\cup\partial D_0^t\). Hence
\[
p-p_B=0
\qquad\text{on }\Upsilon(t)\cup\partial D_0^t,
\]
and the shape derivatives of the remaining volume integrals vanish by
Proposition~3.3 of \cite{van2010goal}. Consequently,
\[
d_\Omega^+\mathscr W(\Omega)[\deltao]=0,
\qquad
\forall\,\deltao\in\Theta_T.
\]

It remains to compute the one-sided shape variation of
\(\mathscr M\). By the spatial regularity assumed in the definition of the admissible
class, and using standard trace-extension results on \(C^{1,1}\)
interfaces, for almost every \(t\in(0,T)\) we may choose spatial
extensions
\[
V^E(\cdot,t),\;W^E(\cdot,t),\;\kappa^E(\cdot,t)\in H^2(D)
\]
to a tubular neighbourhood of \(\Upsilon(t)\), satisfying
\[
V^E=V,\qquad W^E=W,\qquad \kappa^E=\kappa
\quad\text{on }\Upsilon(t),
\]
and chosen constant along normal lines, so that
\[
\partial_n V^E=\partial_n W^E=\partial_n\kappa^E=0
\quad\text{on }\Upsilon(t).
\]
Applying the boundary shape differentiation formula of
Proposition~3.4 in \cite{van2010goal} gives
\begin{align}
d_\Omega^+\mathscr M(\Omega)[\deltao]
&=
\int_0^T\int_{\Upsilon(t)}
\partial_n\!\left(
(\mu_f\phi\lambda-\kappa^E)V^E
\right)\deltao_n
+
c(\mu_f\phi\lambda-\kappa^E)V^E\deltao_n
\,ds\,dt
\nonumber\\
&\quad
+
\int_0^T\int_{\Upsilon(t)}
\kappa^E\delta W
+
\left[
\partial_n(\kappa^E W^E)
+
c\kappa^E W^E
\right]\deltao_n
\,ds\,dt ,
\label{eq:N_shapeder_intermediate}
\end{align}
where \(\deltao_n:=\deltao\cdot n\), \(c=\operatorname{div}_{\Upsilon}n\),
and \(\delta W\) denotes the shape variation of the geometric normal
velocity.

Using the trace identities above and the weak kinematic constraint
\(W=V\) on \(\Upsilon(t)\), the terms involving \(\kappa\) cancel. Indeed,
on \(\Upsilon(t)\),
\[
\partial_n\!\left(
(\mu_f\phi\lambda-\kappa^E)V^E
\right)
+
\partial_n(\kappa^E W^E)
=
\mu_f\phi V\,\partial_n\lambda,
\]
and
\[
c(\mu_f\phi\lambda-\kappa)V
+
c\kappa W
=
c\mu_f\phi\lambda V.
\]
Therefore
\begin{align}
d_\Omega^+\mathscr M(\Omega)[\deltao]
&=
\mu_f\phi
\int_0^T\int_{\Upsilon(t)}
(\nabla\lambda\cdot n)V\,\deltao_n
\,ds\,dt
+
\mu_f\phi
\int_0^T\int_{\Upsilon(t)}
c\lambda V\,\deltao_n
\,ds\,dt
\nonumber\\
&\quad
+
\int_0^T\int_{\Upsilon(t)}
\kappa\,\delta W
\,ds\,dt .
\label{eq:N_shapeder1}
\end{align}

By the structure theorem for shape derivatives
\cite{delfour2011shapes}, the Eulerian shape derivative depends only on
the normal component of the perturbation field on the moving interface.
We therefore write
\[
\deltao_n:=\deltao\cdot n
\]
and, for the purpose of computing the first-order shape derivative,
replace the perturbation generator on \(\Upsilon(t)\) by its normal
representative
\[
\theta:=\deltao_n n,
\]
extended smoothly to a tubular neighbourhood of the interface. This
replacement does not change the Eulerian shape derivative.

Let \(T_{s\theta}\) denote the flow generated by this extension of
\(\theta\). Thus,
\[
\frac{\partial}{\partial s}T_{s\theta}(x,t)
=
\theta\bigl(T_{s\theta}(x,t),t\bigr),
\qquad
T_{0\theta}(x,t)=x.
\]
In particular,
\[
T_{s\theta}(x,t)
=
x+s\theta(x,t)+o(s)
\qquad\text{as }s\downarrow0.
\]

Tangential motion of the moving front changes only its
parameterisation and not its geometry. We therefore choose normal
parameterisations of the reference and perturbed interfaces for the
following calculation. Thus, for a tracked point
\(X(t)\in\Upsilon(t)\), we take
\[
\partial_tX(t)
=
W(X(t),t)n(X(t),t).
\]
Its perturbed counterpart is defined by
\begin{equation}
X_s(t)
:=
T_{s\theta}(X(t),t)
\in\Upsilon_s(t).
\label{eq:deltaV1}
\end{equation}
The perturbed point is likewise taken to satisfy
\begin{equation}
\partial_tX_s(t)
=
W_s(X_s(t),t)n_s(X_s(t),t),
\label{eq:deltaV3}
\end{equation}
where \(n_s\) is the outward unit normal to \(\Upsilon_s(t)\).

Using the first-order expansion of the perturbation flow gives, for
almost every \(t\in(0,T)\),
\begin{equation}
\partial_tX_s(t)
=
\partial_tX(t)
+
s\frac{D\theta}{Dt}(X(t),t)
+
o(s),
\label{eq:deltaV4}
\end{equation}
where \(D/Dt\) denotes the surface material derivative associated with
the chosen interface parameterisation.

Taking the one-sided derivative of
\eqref{eq:deltaV3} at \(s=0^+\), and using
\(\partial_tX=Wn\), yields
\[
\frac{D\theta}{Dt}
=
\delta W\,n+W\,\delta n,
\]
where
\[
\delta W
:=
\left.
\frac{d}{ds}
W_s(X_s(t),t)
\right|_{s=0^+},
\qquad
\delta n
:=
\left.
\frac{d}{ds}
n_s(X_s(t),t)
\right|_{s=0^+}.
\]
Taking the inner product with \(n\) gives
\[
\frac{D\theta}{Dt}\cdot n
=
\delta W+W\,\delta n\cdot n.
\]
Since \(n_s\cdot n_s=1\), one-sided differentiation at \(s=0^+\)
implies
\[
n\cdot\delta n=0.
\]
Consequently,
\[
\delta W
=
\frac{D\theta}{Dt}\cdot n.
\]

Using \(\theta=\deltao_n n\), we obtain
\[
\frac{D\theta}{Dt}\cdot n
=
\frac{D}{Dt}(\deltao_n n)\cdot n
=
\frac{D}{Dt}(\deltao_n)
+
\deltao_n\frac{Dn}{Dt}\cdot n.
\]
Since \(n\cdot n=1\),
\[
\frac{Dn}{Dt}\cdot n=0.
\]
Hence,
\[
\delta W
=
\frac{D}{Dt}(\deltao_n).
\]

Substituting this one-sided velocity variation into
\eqref{eq:N_shapeder1} gives
\begin{align*}
d_\Omega^+\mathscr M(\Omega)[\deltao]
&=
\mu_f\phi
\int_0^T\int_{\Upsilon(t)}
(\nabla\lambda\cdot n+c\lambda)V\,\deltao_n\,ds\,dt
+
\int_0^T\int_{\Upsilon(t)}
\kappa\,\frac{D}{Dt}(\deltao_n)\,ds\,dt .
\end{align*}

Combining the ordinary variations with respect to \(u\), \(p\), and
\(V\) with the one-sided Eulerian directional variation with respect
to \(\Omega\) gives the formal one-sided directional linearisation
\eqref{eq:linearised_weak}.

\end{proof}

Here the material derivative is understood in the surface sense along
the chosen interface parameterisation. On the moving-front portion, a
normal parameterisation is used in the calculation above, since
tangential interface motion changes only the parameterisation and not
the evolving geometry. Because
\[
t\longmapsto\varphi_t
\in
W^{1,1}\bigl(
0,T;C^{0,1}(\widehat\Upsilon;\mathbb R^2)
\bigr),
\]
the corresponding material derivatives are interpreted for almost every
\(t\in(0,T)\). Interface quantities are identified, when required, with
spatial extensions to a tubular neighbourhood of \(\Upsilon(t)\),
chosen constant along normal lines.

\renewcommand{\thesection}{D}

\section{Proof of Theorem~\ref{thm:adjoint_rep_2D}} \label{app: Representers2D}

\begin{proof}
Let $u\in E$ and let \(z=(p,V,\Omega_{T})\) be the solution to the very weak formulation of Definition~\ref{def:weak}. Let $h\in E$ and let $(\deltap,\delta V,\deltao)$ be the solution to the corresponding formal linearisation \eqref{eq:linearised_weak} at $(u,z)$. On the other hand, let $i\in \{1,\dots,M\}$ and $(\lambda_{i},\kappa_{i})$ be the solution to \eqref{eq:adjoint} and \(T^\ast\) defined via \eqref{eq:tstar}. Using $(\lambda,\kappa)=(\lambda_{i},\kappa_i)$ in \eqref{eq:linearised_weak} and rearranging yields:
\begin{align}
-\int_0^T\int_{\Omega(t)}
\deltap\,\nabla\cdot(e^u\nabla\lambda_{i})
\,dx\,dt 
+
\int_0^{T{^*}}\int_{\Upsilon(t)}
(\mu_f\phi\lambda_{i}-\kappa_{i})\delta V\,ds\,dt
\nonumber\\
\quad
+\int_0^{T^*}\int_{\Upsilon(t)}
\kappa_{i}\frac{D}{Dt}(\deltao_n)\,ds\,dt
+\mu_f\phi
\int_0^{T^*}\int_{\Upsilon(t)}
(\nabla\lambda_{i}\cdot n+c\lambda_{i})V\deltao_n\,ds\,dt\nonumber\\
=\int_0^T\int_{\Omega(t)}
p\,\nabla\cdot(e^u h\nabla\lambda_{i})
\,dx\,dt-\int_0^T\int_{\partial\Omega(t)}
p_B e^u h\nabla\lambda_{i}\cdot n
\,ds\,dt
\end{align}
Integrating by parts the first term in the right-hand side and using the definition of $p_{B}$ yields

\begin{align}
-\int_0^T\int_{\Omega(t)}
\deltap\,\nabla\cdot(e^u\nabla\lambda_{i})
\,dx\,dt 
+\int_0^{T{^*}}\int_{\Upsilon(t)}
(\mu_f\phi\lambda_{i}-\kappa_{i})\delta V\,ds\,dt\nonumber\\
+\int_0^{T{^*}}\int_{\Upsilon(t)}
\kappa_{i}\frac{D}{Dt}(\deltao_n)\,ds\,dt
+\mu_f\phi
\int_0^{T{^*}}\int_{\Upsilon(t)}
(\nabla\lambda_{i}\cdot n+c\lambda_{i})V\deltao_n\,ds\,dt\nonumber\\
=- \int_0^T\int_{\Omega(t)}
\,e^u h\,\nabla p\cdot \nabla\lambda_{i}
\,dx\,dt \label{eq:83}
\end{align}
By the transport theorem for moving curves \cite{gurtin1989transport}, we have
\[
\frac{d}{dt}
\int_{\Upsilon(t)}
\kappa_i\,\deltao_n\,ds
=
\int_{\Upsilon(t)}
\left[
\frac{D}{Dt}(\kappa_i\deltao_n)
+
cV\kappa_i\deltao_n
\right]\,ds .
\]
Expanding the material derivative gives
\[
\int_{\Upsilon(t)}
\kappa_i\frac{D}{Dt}(\deltao_n)\,ds
=
\frac{d}{dt}
\int_{\Upsilon(t)}
\kappa_i\deltao_n\,ds
-
\int_{\Upsilon(t)}
\left(
\frac{D\kappa_i}{Dt}
+
cV\kappa_i
\right)\deltao_n\,ds .
\]
Therefore,
\begin{align}
&\int_0^{T^\ast}\int_{\Upsilon(t)}
\kappa_i\frac{D}{Dt}(\deltao_n)\,ds\,dt
+
\mu_f\phi
\int_0^{T^\ast}\int_{\Upsilon(t)}
(\nabla\lambda_i\cdot n+c\lambda_i)V\deltao_n\,ds\,dt \notag
\\
&=
\left[
\int_{\Upsilon(t)}
\kappa_i\deltao_n\,ds
\right]_{0}^{T^\ast}
-
\int_0^{T^\ast}\int_{\Upsilon(t)}
\left[
\frac{D\kappa_i}{Dt}
+
cV(\kappa_i-\mu_f\phi\lambda_i)
-
\mu_f\phi V\nabla\lambda_i\cdot n
\right]\deltao_n\,ds\,dt . \label{eq:app5}
\end{align}
The admissible perturbations preserve the prescribed initial interface, so
\(\deltao_n(\cdot,0)=0\) on \(\Upsilon(0)\). Moreover, by construction, \(\kappa_i(\cdot,T^*(u))=0\). Hence the first term in the right-hand side of \eqref{eq:app5} vanishes while the left-hand side below coincides with that of \eqref{eq:adjoint} term by term.

Substituting \eqref{eq:app5} in \eqref{eq:83} then yields
\begin{align}
-\int_0^T\int_{\Omega(t)}
\deltap\,\nabla\cdot(e^u\nabla\lambda_{i})
\,dx\,dt 
+\int_0^{T{^*}}\int_{\Upsilon(t)}
(\mu_f\phi\lambda_{i}-\kappa_{i})\delta V\,ds\,dt\nonumber\\
-
\int_0^{T^\ast}\int_{\Upsilon(t)}
\left[
\frac{D\kappa_i}{Dt}
+
cV(\kappa_i-\mu_f\phi\lambda_i)
-
\mu_f\phi V\nabla\lambda_i\cdot n
\right]\deltao_n\,ds\,dt \notag
\\
=-\int_0^T\int_{\Omega(t)}
h e^u\nabla p\cdot\nabla\lambda_{i}\,dx\,dt . \label{eq:appD10}
\end{align}
On the other hand, $(\lambda_{i},\kappa_{i})$ satisfies \eqref{eq:adjoint} for all
$(\deltap,\delta V,\deltao) \in \mathcal P_0(\Omega_T)\times\mathcal V(\Omega_T)\times\Theta_T$
and, in particular, for the $(\deltap,\delta V,\deltao)$ given by the linearised weak form above. For this choice, the left-hand side of \eqref{eq:appD10} coincides with the left-hand side of \eqref{eq:adjoint}, yielding the desired result. 
\end{proof}

\renewcommand{\thesection}{E}
\section{Solving the adjoint equations} \label{app:adjoint_eqns}
We discuss the numerical implementation of \eqref{eq: Adjoint_i_1}-\eqref{eq: Adjoint_i_7}. We naturally separate this into a steady-state regime and a moving boundary regime.
\subsection*{Steady-state regime}
If \(\tau_k^*<T\), then for \(t\in[\tau_k^*,T]\) the domain is fully
saturated and the moving interface no longer evolves. In this case the
adjoint variable \(\lambda_i\) is obtained from the weak problem: find
\[
\lambda_i(\cdot,t)
\in
H^1_{0,\partial D_I\cup\partial D_0}(D)
:=
\{v\in H^1(D):v=0
\text{ on }\partial D_I\cup\partial D_0\}
\]
such that
\begin{align}
    \int_D
    e^{u_k}\nabla\lambda_i(\cdot,t)\cdot\nabla\psi\,dx
    =
    \int_D
    \mathcal H_i(\cdot,t)\psi\,dx,
    \qquad
    \forall\psi\in
    H^1_{0,\partial D_I\cup\partial D_0}(D).
    \label{eq: lambda_ss}
\end{align}
This problem is solved using standard finite elements with piecewise linear
basis functions.

\subsection*{Moving boundary regime}

For \(t<T^*\), the adjoint is solved on the saturated region
\(\Omega_k(t)\). Given \(\kappa_i(\cdot,t)\) on the moving interface, we seek
\[
\lambda_i(\cdot,t)
\in
\left\{
v\in H^1(\Omega_k(t)):
v=0
\text{ on }
\partial D_I\cup\partial D_0^t,
\quad
v=\frac{\kappa_i(\cdot,t)}{\mu_f\phi}
\text{ on }\Upsilon_k(t)
\right\}
\]
such that
\begin{align}
    \int_{\Omega_k(t)}
    e^{u_k}
    \nabla\lambda_i(\cdot,t)\cdot\nabla\psi\,dx
    =
    \int_{\Omega_k(t)}
    \mathcal H_i(\cdot,t)\psi\,dx,
    \label{eq: lambda_mb}
\end{align}
for all
\[
\psi\in
\left\{
v\in H^1(\Omega_k(t)):
v=0
\text{ on }
\partial D_I\cup\partial D_0^t\cup\Upsilon_k(t)
\right\}.
\]
Thus, once \(\kappa_i\) is known, \(\lambda_i\) is obtained from a standard
Dirichlet problem posed on the current saturated domain.
The terminal condition is imposed at the final active-front time $\kappa_i(\cdot,T^*)=0$.

Let \(\{t_j\}_{j=0}^{N_t}\) denote the time discretisation inherited from the
forward CVFEM solve, with \(t_0=0\) and
\(t_{N_t}\approx T^*\). Starting from
\(\kappa_i(\cdot,t_{N_t})=0\), we first solve
\eqref{eq: lambda_mb} at \(t_{N_t}\) to obtain
\(\lambda_i(\cdot,t_{N_t})\), and hence
\(\nabla\lambda_i(\cdot,t_{N_t})\). We then march backward in time using
\[
t_{j-1}=t_j-\Delta t_j,
\]
where \(\Delta t_j\) is inherited directly from the forward solve.

The evolution equation for \(\kappa_i\) is interpreted in the reference
parametrisation of the moving interface. Writing
\[
\widehat\kappa_i(X,t)
:=
\kappa_i(\varphi_t(X),t),
\qquad
X\in \widehat{\Upsilon}, 
\]
the surface material derivative satisfies
\[
\frac{D\kappa_i}{Dt}(\varphi_t(X),t)
=
\partial_t\widehat\kappa_i(X,t).
\]
Consequently, along tracked interface elements in the CVFEM discretisation,
the material derivative is approximated by a backward time difference.

Since each forward CVFEM step fills exactly one control volume \cite{MichaelMinho,Zenodo}, most of the
boundary nodes and elements comprising \(\Upsilon_k(t_j)\) persist in
\(\Upsilon_k(t_{j-1})\). This behaviour is illustrated schematically in
Figure~\ref{fig: kappa_mesh}.

\begin{figure}[h]
    \centering
    \includegraphics[width=0.3\linewidth]{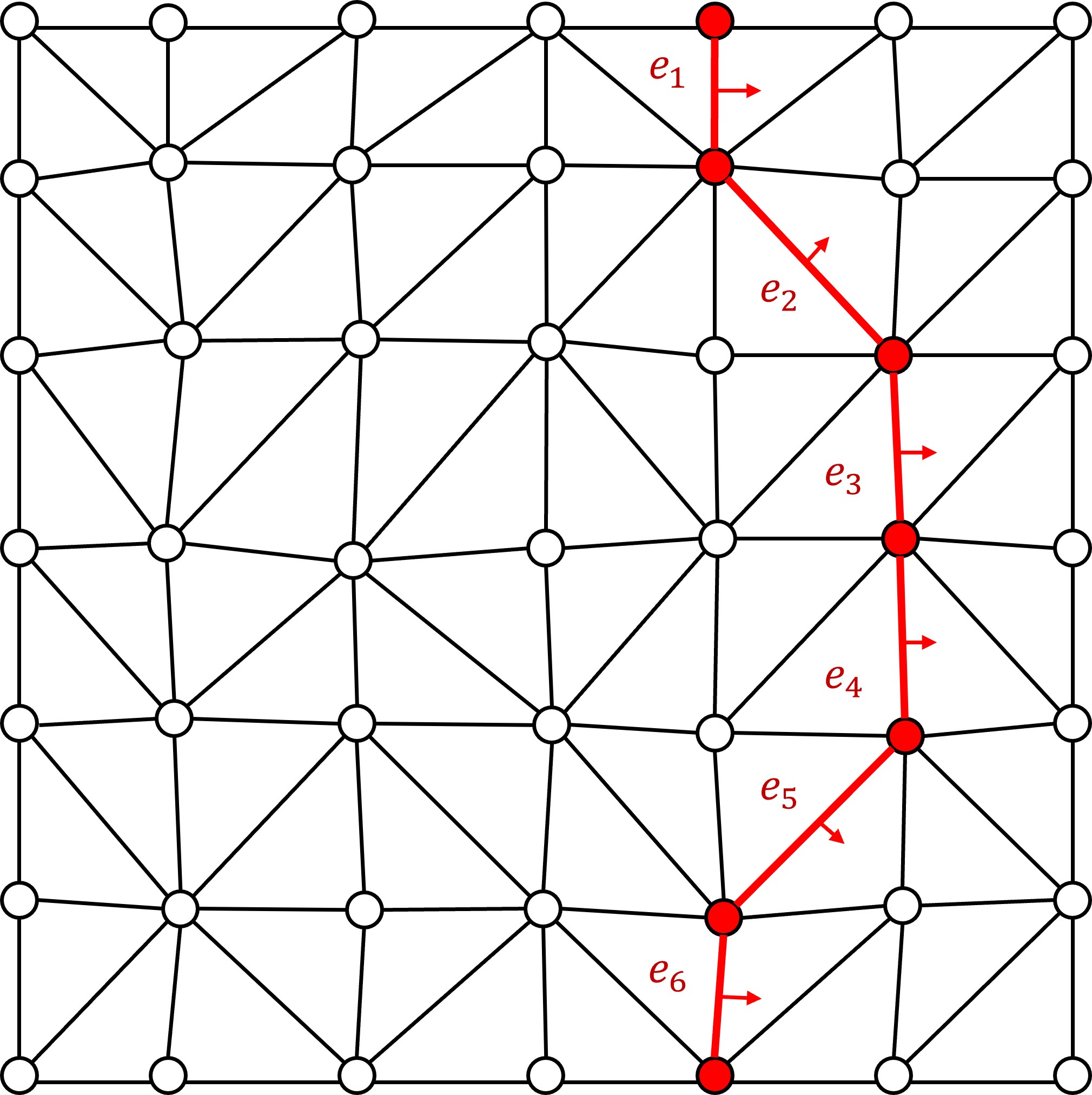}
    \quad\quad
    \includegraphics[width=0.3\linewidth]{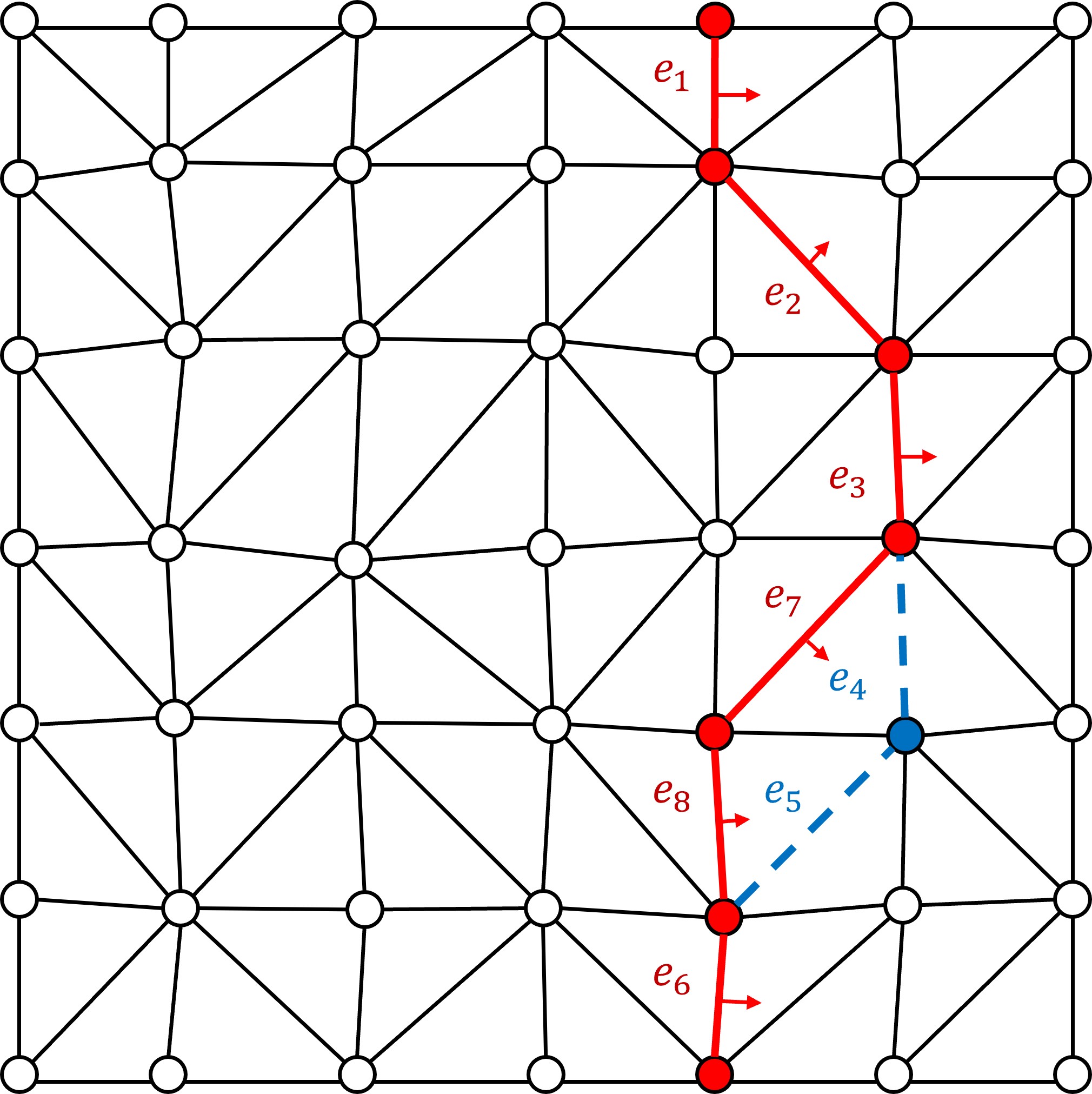}
    \caption{Active nodes, elements (denoted \(e\) in the figure), and edges
    comprising \(\Upsilon_k(t_j)\) (left) and
    \(\Upsilon_k(t_{j-1})\) (right) are shown in red. In blue are nodes,
    elements, and edges that were active at time \(t_j\) but inactive at
    \(t_{j-1}\). Also shown are the unit normals along each edge, used in the
    evaluation of the adjoint update.}
    \label{fig: kappa_mesh}
\end{figure}

For boundary elements that are active at both time levels, with centroid
\(x_e^c\), we update \(\kappa_i\) using the explicit reverse-time Euler
scheme
\begin{align}
    \kappa_i(x_e^c,t_{j-1})
    =
    \kappa_i(x_e^c,t_j)
    +
    \Delta t_j\,
    e^{u_k(x_e^c)}
    \big(\nabla p_k(x_e^c,t_j)\cdot n_e\big)
    \big(\nabla\lambda_i(x_e^c,t_j)\cdot n_e\big),
    \label{eq: exp_euler}
\end{align}
where \(n_e\) denotes the outward unit normal associated with the boundary
element.

For newly activated boundary elements at \(t_{j-1}\), the same update is
used, but the right-hand side is evaluated using a nearest-neighbour rule.
Specifically, the update is computed using the centroid of the closest
boundary element that was active at time \(t_j\).

The resulting elementwise values of
\(\kappa_i(\cdot,t_{j-1})\) are then averaged over all boundary elements
incident to each boundary node, producing a nodal approximation of
\(\kappa_i\) on \(\Upsilon_k(t_{j-1})\). These nodal values are imposed as
Dirichlet data in \eqref{eq: lambda_mb} to compute
\(\lambda_i(\cdot,t_{j-1})\). The procedure is repeated backward in time
until \(t=0\).

\renewcommand{\thesection}{F}
\section{Effect of sensor density} \label{subsec: SensorDensity}
A natural question, explored previously in the RTM setting \cite{Iglesias_2018,Causon2024}, is how sensor density affects the quality of the posterior approximation. To investigate this, we adopt the setup of Section~\ref{sec: LMAP2D}, but with a new true log-permeability field $u^\dagger \sim \NN(\ubar,\CC_0)$ and observational noise $\eta\sim \NN(0,\Sigma)$, yielding data $y= \GG(u^\dagger) + \eta$. The LMAP algorithm is then applied across the sensor grids: $3\times 3$, $5 \times 5$, $7\times 7$, $9\times 9$, and $15 \times 15$.

Figure~\ref{fig: sensor_density} shows the resulting posterior mean estimates at each observation time. Uncertainty is visualised via white masks with transparency $1 - (\sigma_i(x)/\sigma)^2$, where $\sigma_i$ denotes the posterior approximation to standard deviation at observation time $t_i$ and $\sigma$ is the prior standard deviation. Regions of low saturation (or `fog') indicate higher posterior variance. Increasing sensor density improves reconstruction accuracy and reduces uncertainty, with diminishing returns at higher sensor densities. The execution times for computing the final-time posterior approximation for sensor configurations $3\times 3$, $5\times 5$, $7\times 7$, $9\times 9$ and $15\times 15$ were $72$, $101$, $173$, $165$ and $363$ seconds, respectively.

\begin{figure}
    \centering
    \includegraphics[width=\linewidth]{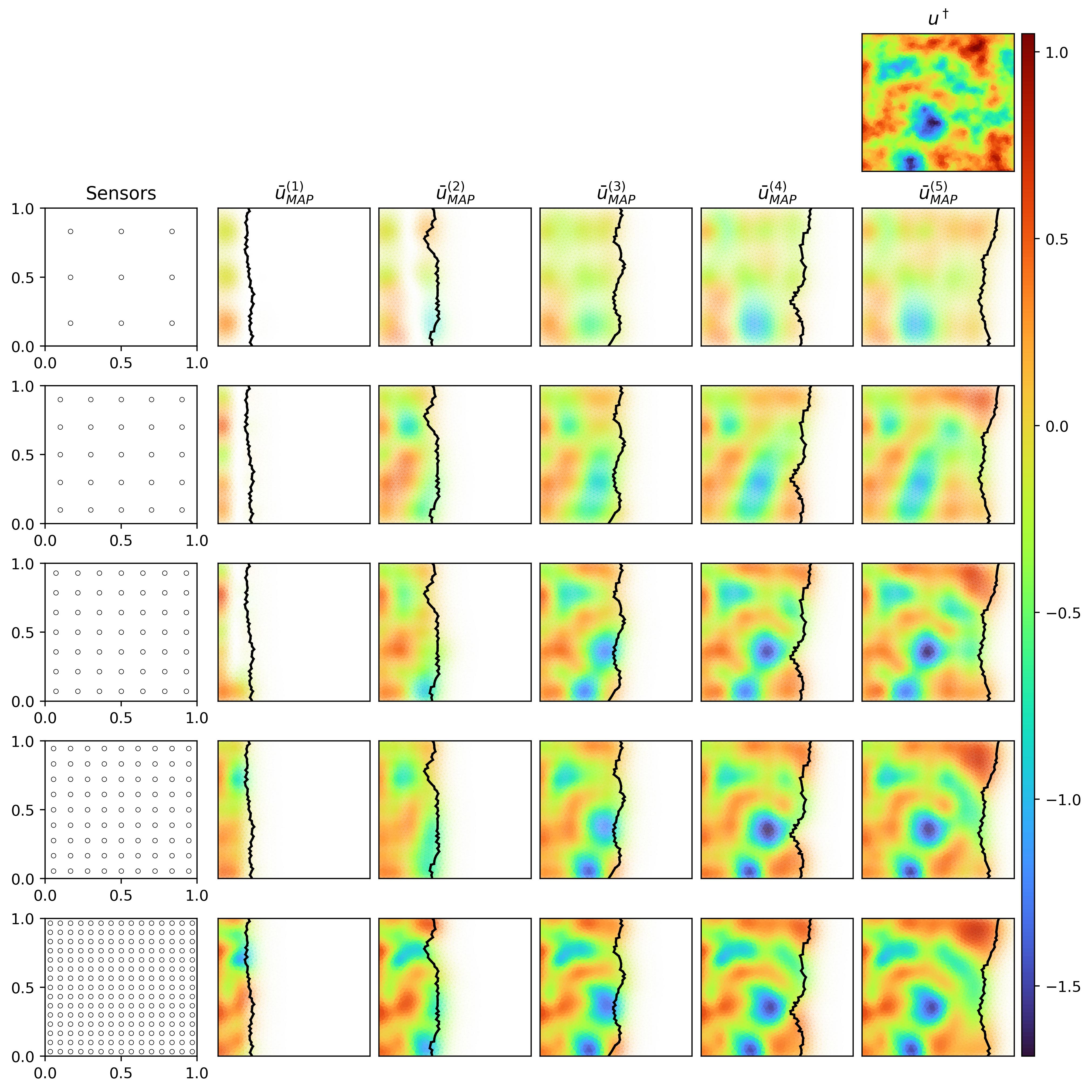}
    \caption{LMAP estimates for true log-permeability $u^\dagger$ field (top right) at each observation time, using various sensor densities ($3\times 3$, $5\times 5$, $7\times 7$, $9\times 9$ and $15\times 15$). True resin front location at each observation time $\{\Upsilon[u^\dagger](t_i)\}_{i=1}^5$ is overlaid for reference (black).}
    \label{fig: sensor_density}
\end{figure}



\bibliographystyle{iopart-num}
\bibliography{references}

\end{document}